\documentclass[a4paper,11pt]{article}
\usepackage{graphicx}
\usepackage[T1]{fontenc}
\usepackage[utf8]{inputenc}
\usepackage[english]{babel}
\usepackage{amsmath,amsthm,amssymb,amsfonts}
\usepackage{lmodern}
\usepackage{subcaption}
\usepackage{csquotes}
\usepackage[backend=bibtex,style=trad-abbrv,sorting=nyt,url=true]{biblatex}
\addbibresource{references.bib}

\usepackage{geometry}
 \geometry{left=3cm}
 \geometry{right=2.5cm}
 \geometry{top=2cm}
\geometry{bottom=2.5cm}

 \usepackage{empheq}
 \usepackage{xcolor}
 \definecolor{darkgreen}{rgb}{0,0.5,0}
 \definecolor{darkblue}{rgb}{0,0.08,0.45}

\usepackage{hyperref}
\hypersetup{
        colorlinks=true,
        linkcolor=darkblue,
        citecolor=darkblue,
        urlcolor=darkgreen
    }

\usepackage{algorithm}
\usepackage{algorithmic}

\usepackage{enumerate}
\usepackage{cleveref}

\newtheorem{theorem}{Theorem}
\theoremstyle{definition}
\newtheorem{corollary}{Corollary}
\newtheorem{lemma}{Lemma}
\newtheorem{proposition}{Proposition}

\newcommand{\n}[1]{\|#1 \|}
\newcommand{\E}[1]{\mathbb{E}\left[#1\right]}

\renewcommand{\a}{\alpha}
\renewcommand{\b}{\beta}

\newcommand{\la}{\lambda}

\renewcommand{\phi}{\varphi}

\newcommand{\e}{\varepsilon}
\renewcommand{\th}{\theta}
\newcommand{\Th}{\Theta}
\newcommand{\eqdef}{\stackrel{\mathrm{def}}{=}}

\newcommand{\x}{\bar x}

\newcommand{\R}{\mathbb R}

\newcommand{\N}{\mathbb N}

\newcommand{\cO}{\mathcal O}
\newcommand{\cX}{\mathcal X}

\newcommand{\lr}[1]{\langle #1\rangle}

\DeclareMathOperator{\clconv}{\overline{conv}}
\def\<#1,#2>{\langle #1,#2\rangle}
\def\toptitlebar{\hrule height1pt \vskip .25in}
\def\bottomtitlebar{\vskip .22in \hrule height1pt \vskip .3in}

\title{\vspace{-1cm}\toptitlebar\textbf{Adaptive Gradient Descent without Descent} \bottomtitlebar}

\author{\textbf{Yura Malitsky}\footnote{EPFL, Lausanne, Switzerland,
        \href{mailto:y.malitsky@gmail.com}{y.malitsky@gmail.com}}
    \and \textbf{Konstantin Mishchenko}\thanks{KAUST, Thuwal, Saudi Arabia,        \href{mailto:konsta.mish@gmail.com}{konsta.mish@gmail.com}}}
\date{}
\begin{document}
\maketitle

\begin{abstract}
    We present a strikingly simple proof that two rules are sufficient
    to automate gradient descent: 1)~don't increase the stepsize too
    fast and 2)~don't overstep the local curvature. No need for
    functional values, no line search, no information about the
    function except for the gradients. By following these rules, you
    get a method adaptive to the local geometry, with convergence
    guarantees depending only on the smoothness in a neighborhood of a
    solution. Given that the problem is convex, our method
    converges even if the global smoothness constant is infinity. As an
    illustration, it can minimize arbitrary continuously
    twice-differentiable convex function. We examine its performance
    on a range of convex and nonconvex problems, including logistic
    regression and matrix factorization.
\end{abstract}

\section{Introduction}
Since the early days of optimization it was evident that there is a
need for algorithms that are as independent from the
user as possible. First-order methods have proven to be versatile and
efficient in a wide range of applications, but one drawback has been
present all that time: the stepsize. Despite certain success
stories, line search procedures and adaptive online methods have not
removed the need to manually tune the optimization parameters. Even in
smooth convex optimization, which is often believed to be much simpler
than the nonconvex counterpart, robust rules for stepsize selection
have been elusive. The purpose of this work is to remedy this deficiency.

The problem formulation that we consider is the basic unconstrained optimization problem
\begin{equation}
  \label{main}
  \min_{x\in \R^d}\ f(x),
\end{equation}
where $f\colon \R^d \to \R$ is  a differentiable
function. Throughout the paper we assume that \eqref{main} has a
solution and we denote its optimal value by $f_*$.

The simplest and most known approach to this problem is the gradient
descent method (GD), whose origin can be traced back to
Cauchy~\cite{cauchy1847methode,lemarechal2012cauchy}. Although it is
probably the oldest optimization method, it continues to play a
central role in modern algorithmic theory and applications. Its
definition can be written in a mere one line,
\begin{equation}
    \label{eq:grad}
    x^{k+1} = x^k - \la \nabla f(x^k), \qquad k\geq 0,
\end{equation}
where $x^0\in \R^d$ is arbitrary and $\la >0$. Under  assumptions that
$ f$ is convex and $L$--smooth (equivalently, $\nabla f$ is $L$-Lipschitz)
that is
\begin{equation}\label{Lipschitz}
    \n{\nabla f(x)-\nabla f(y)}\leq L \n{x-y}, \quad \forall x,y,
\end{equation}
one can show that GD with $\la \in (0, \frac 2 L)$ converges to an
optimal solution~\cite{polyak1963gradient}.  Moreover, with
$\la = \frac 1 L $ the convergence rate~\cite{drori2014performance} is
\begin{equation}
    \label{eq:grad_rate}
    f(x^k)-f_*\leq \frac{L\n{x^0-x^*}^2}{2(2k+1)},
\end{equation}
where $x^*$ is any solution of \eqref{main}. Note that this bound is not improvable~\cite{drori2014performance}.

We identify four important challenges that limit the applications of
gradient descent even in the convex case:
\begin{enumerate}
\itemsep0em
    \item GD is not general: many functions do not satisfy \eqref{Lipschitz} globally.
    \item GD is not a free lunch: one needs to guess $\lambda$,
    potentially trying many values before a success.
    \item GD is not robust: failing to provide $\la < \frac{2}{L}$ may lead to divergence.
    \item GD is slow: even if $L$ is finite, it might be arbitrarily
    larger than local smoothness.
\end{enumerate}

\subsection{Related work}
Certain ways to address some of the issues above already exist in the literature. They include line search, adaptive Polyak's stepsize, mirror descent, dual preconditioning, and stepsize estimation for subgradient methods. We discuss them one by one below, in a process reminiscent of cutting off Hydra's limbs: if one issue is fixed, two others take its place.

The most practical and generic solution to the aforementioned issues
is known as line search (or backtracking). This direction of research
started from the seminal works~\cite{goldstein1962cauchy}
and~\cite{armijo1966} and continues to attract attention, see
\cite{bello2016convergence,salzo2017variable} and references therein.
In general, at each iteration the line search executes another
subroutine with additional evaluations of $\nabla f$ and/or $f$ until
some condition is met. Obviously, this makes each iteration more
expensive.

At the same time, the famous Polyak's
stepsize~\cite{polyak1969minimization} stands out as a very fast
alternative to gradient descent. Furthermore, it does not depend on
the global smoothness constant and uses the current gradient to
estimate the geometry. The formula might look deceitfully simple,
$\la_k = \frac{f(x^k)-f_*}{\n{\nabla f(x^k)}^2}$, but there is a
catch: it is rarely possible to know $f_*$. This method, again,
requires the user to guess $f_*$. What is more, with $\lambda$ it was fine to underestimate it by a factor of 10, but the guess for $f_*$ must be tight, otherwise it has to be reestimated later~\cite{hazan2019revisiting}.

Seemingly no issue is present in the
Barzilai-Borwein stepsize. Motivated by the quasi-Newton
schemes, \cite{barzilai1988two} suggested using steps
\[\la_k = \frac{\lr{x^k-x^{k-1},\nabla f(x^k)-\nabla f(x^{k-1})}}{\n{\nabla f(x^k)-\nabla f(x^{k-1})}^2}.\]
Alas, the convergence results regarding this choice of $\la_k$ are
very limited and the only known case where it provably works is
quadratic problems~\cite{raydan1993barzilai, dai2002r}. In general it
may not work even for smooth strongly convex functions, see the
counterexample in~\cite{burdakov2019stabilized}.

Other more interesting ways to deal with
non-Lipschitzness of $\nabla f$ use the problem structure. The first
method, proposed in~\cite{birnbaum2011distributed} and further
developed in~\cite{Bauschke2016}, shows that the mirror descent method \cite{nemirovsky1983problem}, which is
another extension of GD, can be used with a fixed stepsize, whenever
$f$ satisfies a certain generalization of~\eqref{Lipschitz}.
In addition, \cite{maddison2019dual} proposed the dual
preconditioning method---another refined version of GD. Similarly to
the former technique, it also goes beyond the standard smoothness
assumption of $f$, but in a different way. Unfortunately, these two simple and elegant
approaches cannot resolve all issues yet.
First, not many
functions fulfill respective generalized conditions. And secondly, both
methods still get us back to the problem of not knowing the allowed
range of stepsizes.

A whole branch of optimization considers adaptive extensions of GD that deal with functions whose (sub)gradients are bounded. Probably the earliest work in that direction was written by \cite{shor1962application}. He showed that the method
\begin{align*}
        x^{k+1} = x^k - \la_k \frac{g^k}{\|g^k\|},
\end{align*}
where $g^k\in \partial f(x^k)$ is a subgradient, converges for
properly chosen sequences $(\la_k)$, see, e.g.,\ Section 3.2.3 in~\cite{Nesterov2013}.
Moreover, $\la_k$ requires no knowledge about the function whatsoever.

 Similar methods that work in online setting such as Adagrad~\cite{duchi2011adaptive,mcmahan2010adaptive} received a lot of attention in recent years and remain an active topic of research~\cite{ward19a}.
Methods similar to Adagrad---Adam~\cite{adam, adam2},
RMSprop~\cite{Tieleman2012} and
Adadelta~\cite{zeiler2012adadelta}---remain state-of-the-art for
training neural networks. The corresponding objective is usually
neither smooth nor convex, and the theory often assumes Lipschitzness
of the function rather than of the gradients. Therefore, this
direction of research is mostly orthogonal to ours, although we do
compare with some of these  methods in our neural networks experiment.

We also note that without momentum Adam and RMSprop reduce to signSGD~\cite{bernstein2018signsgd}, which is known to be non-convergent for arbitrary stepsizes on a simple quadratic problem~\cite{karimireddy2019error}.

In a close relation to ours is the recent work~\cite{Malitsky2019},
where there was proposed an adaptive golden ratio algorithm for monotone
variational inequalities. As it solves a more general problem, it does not exploit the structure
of~\eqref{main} and, as most variational inequality methods, has a more conservative update. Although the method estimates the smoothness, it still requires an upper bound on the stepsize as input.

\paragraph{Contribution.} We propose a new version of GD that at no
cost resolves all aforementioned issues. The idea is simple, and it is
surprising that it has not been yet discovered.  In each iteration we
choose $\la_k$ as a certain approximation of the inverse local Lipschitz
constant. With such a choice, we prove that convexity and local
smoothness of $f$ are sufficient for convergence of iterates with the
complexity  $\mathcal{O}(1/k)$ for $f(x^k)-f_*$ in the worst case.

\paragraph{Discussion.} Let us now briefly discuss why we believe that proofs based on monotonicity and global smoothness lead to slower methods.

Gradient descent is by far not a recent method, so there have been
obtained optimal rates of convergence. However, we argue that adaptive
methods require rethinking optimality of the stepsizes. Take as an
example a simple quadratic problem,
 $f(x, y)=\frac{1}{2}x^2+\frac{\delta}{2}y^2$, where
$\delta \ll 1$. Clearly, the smoothness constant of this problem is
equal to $L=1$ and the strong convexity one is $\mu=\delta$. If we run
GD from an arbitrary point $(x^0, y^0)$ with the ``optimal'' stepsize
$\la = \frac{1}{L} =1$, then one iteration of GD  gives us $(x^1, y^1)
= (0, (1-\delta) y^0)$, and similarly $(x^k, y^k) = (0,
(1-\delta)^{k}y^0)$. Evidently for $\delta $ small enough it will take a long time to converge to the solution
$(0,0)$. Instead GD would converge in two iterations if it adjusts its step after the first iteration to $\la = \frac{1}{\delta}$.

Nevertheless, all existing analyses of the gradient descent with $L$-smooth $f$ use
stepsizes bounded by $2/L$. Besides, functional analysis gives
\[f(x^{k+1})\le f(x^k) - \la\Bigl(1 - \frac{\la
    L}{2}\Bigr)\|\nabla f(x^k)\|^2,\]
     from which $1/L$ can be seen as the
``optimal'' stepsize. Alternatively, we can assume that $f$ is
$\mu$-strongly convex, and the analysis in norms gives
\begin{align*}
\|x^{k+1}-x^*\|^2\le \Bigl(1 - 2\frac{\la L \mu}{L + \mu} \Bigr)\|x^k-x^*\|^2 - \la\Bigl(\frac{2}{L+\mu} - \la\Bigr)\|\nabla f(x^k) \|^2,
\end{align*}
whence the ``optimal'' step is $\frac{2}{L+\mu}$.

Finally,
line search procedures use some certain type of monotonicity, for
instance ensuring that $f(x^{k+1})\le f(x^k) - c\|\nabla f(x^k)\|^2$
for some $c>0$. We break with this tradition and merely ask for
convergence in the end.

\section{Main part}
\subsection{Local smoothness of $f$}\label{subs:main}
Recall that a mapping is \emph{locally Lipschitz} if it is Lipschitz over
any compact set of its domain. A function $f$ with (locally) Lipschitz
gradient $\nabla f$ is called (locally) smooth.  It is natural to
ask whether some interesting functions are smooth locally, but not
globally.

It turns out there is no shortage of examples, most prominently among
highly nonlinear functions. In $\R$, they include
$x\mapsto \exp(x)$, $\log(x)$, $\tan(x)$, $x^p$, for $p > 2$,
etc. More generally, they include any twice differentiable $f$, since  $\nabla^2
f(x)$, as a continuous mapping, is  bounded over any bounded set
$\mathcal{C}$. In this case, we have that $\nabla f$ is Lipschitz on
$\mathcal{C}$, due to the mean value inequality
\[\n{\nabla f(x) - \nabla f(y)}\leq \max_{z\in \mathcal{C}}\n{\nabla^2 f(z)}\n{x-y},\quad \forall
    x,y\in \mathcal{C}.\]

Algorithm~\ref{alg:main} that we propose is just a slight modification of
 GD. The quick explanation why local Lipschitzness of $\nabla f$
does not cause us any problems, unlike  most other methods, lies in
the way we prove its convergence. Whenever the stepsize $\la_k$
satisfies two inequalities\footnote{It can be shown that instead of the second condition it is enough to ask for $\la_k^2\le \frac{\n{x^{k}-x^{k-1}}^2}{[3\n{\nabla
        f(x^{k})}^2 - 4\<\nabla f(x^k), \nabla f(x^{k-1})>]_+}$, where $[a]_+\eqdef \max \{0, a\}$, but we prefer the option written in the main text for its simplicity.}
\begin{align*}
        \begin{cases}
  \la_k^2 & \leq (1+\th_{k-1})\la_{k-1}^2,\\ \la_k & \leq \frac{\n{x^{k}-x^{k-1}}}{2\n{\nabla
        f(x^{k})-\nabla f(x^{k-1})}},
        \end{cases}
\end{align*}
independently of the properties of $f$ (apart from convexity), we can show
that the iterates $(x^k)$ remain bounded. Here and everywhere else we
use the convention $1/0=+\infty$, so if $\nabla f(x^k) -\nabla
f(x^{k-1})=0$, the second inequality can be ignored. In the
first iteration it might happen that $\la_1 = \min\{+\infty\}$, in
this case we suppose that any choice of $\la_1>0$ is possible.
 \begin{algorithm}[t]
 \caption{Adaptive gradient descent}
 \label{alg:main}
 \begin{algorithmic}[1]
     \STATE \textbf{Input:} $x^0 \in \R^d$, $\la_0>0$,
     $\th_0=+\infty$\\
     \STATE  $x^1= x^0-\la_0\nabla f(x^0)$
        \FOR{$k = 1,2,\dots$}
        \STATE $\la_k = \min\Bigl\{
        \sqrt{1+\th_{k-1}}\la_{k-1},\frac{\n{x^{k}-x^{k-1}}}{2\n{\nabla
        f(x^{k})-\nabla f(x^{k-1})}}\Bigr\}$
\STATE $x^{k+1} = x^k - \la_k \nabla f(x^k)$
\STATE $\th_k = \frac{\la_k}{\la_{k-1}}$
        \ENDFOR
 \end{algorithmic}
 \end{algorithm}

 Although Algorithm~\ref{alg:main} needs $x^0$ and $\la_0$ as input,
 this is not an issue as one can simply fix $x^0=0$ and
 $\la_0=10^{-10}$. Equipped with a tiny $\la_0$, we ensure
 that $x^1$ will be close enough to $x^0$ and likely will give a good
 estimate for $\la_1$. Otherwise, this has no  influence on further steps.

\subsection{Analysis without descent}
It is now time to show our main contribution, the new analysis technique. The tools that we are going to use are the well-known Cauchy-Schwarz and convexity inequalities. In addition, our methods are related to potential functions~\cite{taylor19a}, which is a powerful tool for producing tight bounds for GD.

Another divergence from the common practice is that our main lemma includes not only $x^{k+1}$ and $x^k$, but also $x^{k-1}$. This can be seen as a two-step analysis, while the majority of optimization methods have one-step bounds. However, as we want to adapt to the local geometry of our objective, it is rather natural to have two terms to capture the change in the gradients.

Now, it is time to derive a characteristic inequality for a specific
Lyapunov energy.
\begin{lemma}\label{lemma:energy}
Let $f\colon \R^d\to \R$ be convex and differential and let $x^*$ be any solution of \eqref{main}.  Then for $(x^k)$ generated
by Algorithm~\ref{alg:main} it holds
\begin{multline}
  \label{eq:lemma_ineq}
\n{x^{k+1}-x^*}^2+ \frac 1 2 \n{x^{k+1}-x^k}^2  + 2\la_{k}(1+\th_{k})
 (f(x^k)-f_*)  \\ \leq \n{x^k-x^*}^2  + \frac 1 2
 \n{x^k-x^{k-1}}^2    + 2\la_k \th_k (f(x^{k-1})-f_*).
\end{multline}
\end{lemma}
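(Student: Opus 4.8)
The plan is to run the standard gradient-descent expansion and then absorb the leftover squared-gradient term into the two-step energy using only convexity and the second stepsize rule. First I would expand, using $x^{k+1}=x^k-\la_k\nabla f(x^k)$,
\[
\n{x^{k+1}-x^*}^2=\n{x^k-x^*}^2-2\la_k\lr{\nabla f(x^k),x^k-x^*}+\la_k^2\n{\nabla f(x^k)}^2,
\]
apply convexity in the form $\lr{\nabla f(x^k),x^k-x^*}\ge f(x^k)-f_*$, and record the identity $\n{x^{k+1}-x^k}^2=\la_k^2\n{\nabla f(x^k)}^2$. Substituting these into \eqref{eq:lemma_ineq} and cancelling the common $\n{x^k-x^*}^2$ and $-2\la_k(f(x^k)-f_*)$ contributions, the whole statement reduces to the single scalar inequality
\[
\tfrac32\n{x^{k+1}-x^k}^2+2\la_k\th_k\bigl(f(x^k)-f(x^{k-1})\bigr)\le\tfrac12\n{x^k-x^{k-1}}^2.
\]

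This is the point where the extra iterate $x^{k-1}$ must enter, and it is the heart of the argument. To bound the difference of function values I would use convexity once more, now at $x^k$: $f(x^k)-f(x^{k-1})\le\lr{\nabla f(x^k),x^k-x^{k-1}}$. Since $x^k-x^{k-1}=-\la_{k-1}\nabla f(x^{k-1})$ and $\th_k=\la_k/\la_{k-1}$, the cross term collapses to exactly $-2\la_k^2\lr{\nabla f(x^k),\nabla f(x^{k-1})}$; combined with $\n{x^{k+1}-x^k}^2=\la_k^2\n{\nabla f(x^k)}^2$ the display above becomes the purely algebraic
\[
\la_k^2\bigl(3\n{\nabla f(x^k)}^2-4\lr{\nabla f(x^k),\nabla f(x^{k-1})}\bigr)\le\n{x^k-x^{k-1}}^2.
\]

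Finally I would connect this to the simple stepsize rule. Writing $g_k=\nabla f(x^k)$, the rule $\la_k\le\frac{\n{x^k-x^{k-1}}}{2\n{g_k-g_{k-1}}}$ gives $4\la_k^2\n{g_k-g_{k-1}}^2\le\n{x^k-x^{k-1}}^2$, so it suffices to observe the completion of the square $4\n{g_k-g_{k-1}}^2-\bigl(3\n{g_k}^2-4\lr{g_k,g_{k-1}}\bigr)=\n{g_k-2g_{k-1}}^2\ge0$ (if the bracket is negative the target inequality holds trivially). This same identity is what lets the footnote replace $\n{g_k-g_{k-1}}$ by the sharper denominator. Note that only the second stepsize condition is used here; the first one, $\la_k^2\le(1+\th_{k-1})\la_{k-1}^2$, plays no role in this lemma and is needed only later to make the energy telescope.

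The main obstacle is really the second step: recognizing that after invoking convexity at $x^k$ the awkward quantity $3\n{g_k}^2-4\lr{g_k,g_{k-1}}$ is precisely dominated by the squared gradient increment that the stepsize already controls. Without this exact cancellation the two-step energy would not close, and it is this observation—rather than any hard estimate—that makes the surprisingly short argument work.
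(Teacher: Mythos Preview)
Your proof is correct and uses the same two applications of convexity as the paper (once at $x^*$, once at $x^k$ to bring in $f(x^{k-1})$), but the final step is organized differently. The paper keeps everything in terms of iterate differences: it writes $\n{x^{k+1}-x^k}^2=-2\la_k\lr{\nabla f(x^k),x^{k+1}-x^k}-\n{x^{k+1}-x^k}^2$, splits $\nabla f(x^k)=(\nabla f(x^k)-\nabla f(x^{k-1}))+\nabla f(x^{k-1})$, and closes the first piece with Cauchy--Schwarz plus Young, $2\la_k\n{\nabla f(x^k)-\nabla f(x^{k-1})}\n{x^{k+1}-x^k}\le\n{x^k-x^{k-1}}\n{x^{k+1}-x^k}\le\tfrac12\n{x^k-x^{k-1}}^2+\tfrac12\n{x^{k+1}-x^k}^2$. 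You instead reduce everything to a pure gradient inequality and finish with the exact identity $4\n{g_k-g_{k-1}}^2-(3\n{g_k}^2-4\lr{g_k,g_{k-1}})=\n{g_k-2g_{k-1}}^2\ge 0$. The advantage of your route is that it is lossless at the last step and, as you point out, it immediately explains the sharper denominator $[3\n{g_k}^2-4\lr{g_k,g_{k-1}}]_+$ from the footnote; the paper's Cauchy--Schwarz/Young step throws away exactly this slack. Both arguments are equally short, and your observation that only the second stepsize rule is used here is also made (implicitly) in the paper right after the lemma.
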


\begin{proof}
    Let $k\geq 1$. We start from the standard way of analyzing GD:
    \begin{align*}
      \|x^{k+1}- x^*\|^2
      &= \|x^k - x^*\|^2 + 2\<x^{k+1} - x^{k}, x^k-x^*>+ \|x^{k+1} - x^{k}\|^2\\
      &= \|x^k - x^*\|^2 + 2\la_k \<\nabla f(x^k), x^* - x^k>  + \|x^{k+1} - x^{k}\|^2.
    \end{align*}
    As usually, we bound the scalar product by convexity of $f$:
    \begin{align}\label{eq:conv}
       2\la_k \<\nabla f(x^k), x^* - x^k> \le 2\la_k (f_* - f(x^k)),
    \end{align}
    which gives us
    \begin{align}\label{eq:norms}
          \|x^{k+1}- x^*\|^2
          \le \|x^k - x^*\|^2 - 2\la_k(f(x^k)-f_*)  + \|x^{k+1} - x^{k}\|^2.
    \end{align}
    These two steps have been repeated thousands of times, but now we continue in a completely different manner. We have precisely one ``bad'' term in~\eqref{eq:norms}, which
    is $\n{x^{k+1}-x^k}^2$. We will bound it
    using the difference of gradients:
    \begin{align}\label{dif_x}
      \|x^{k+1} -x^k\|^2 & = 2 \n{x^{k+1}-x^k}^2 -
      \n{x^{k+1}-x^k}^2  = -2\la_k \lr{\nabla f(x^k),
                               x^{k+1}-x^k}- \n{x^{k+1}-x^k}^2\notag
                               \\ &= 2\la_k \lr{\nabla f(x^k)-\nabla
                                   f(x^{k-1}), x^{k}-x^{k+1}}\notag\\
                               & \qquad \qquad +
                                   2\la_k\lr{\nabla f(x^{k-1}), x^k-x^{k+1}}
      -  \n{x^{k+1}-x^k}^2.
  \end{align}
Let us estimate the first two terms in the right-hand
    side above. First,   definition of $\la_k$, followed by
Cauchy-Schwarz and Young's inequalities, yields
\begin{align}\label{cs}
   2\la_k \lr{\nabla f(x^k) -\nabla f(x^{k-1}), x^k - x^{k+1}}  & \leq
2\la_k \n{\nabla f(x^k) -\nabla f(x^{k-1})} \n{x^k - x^{k+1}} \notag \\ &\leq
\n{x^k -x^{k-1}} \n{x^k - x^{k+1}} \notag \\ &\leq
  \frac 1 2 \n{x^{k}-x^{k-1}}^2 + \frac{1}{2}\n{x^{k+1}-x^k}^2.
\end{align}
Secondly, by convexity of $f$,
    \begin{align}    \label{eq:terrible_simple}
        2\la_k\lr{\nabla f(x^{k-1}), x^k-x^{k+1}}
      &=  \frac{2\la_k}{\la_{k-1}}\lr{x^{k-1} - x^{k},
        x^{k}-x^{k+1}}\notag  =  2\la_k\th_k \lr{x^{k-1}-x^{k}, \nabla
        f(x^k)}  \notag \\ & \leq  2\la_k\th_k
    (f(x^{k-1})-f(x^k)).
\end{align}
Plugging~\eqref{cs} and \eqref{eq:terrible_simple}
in~\eqref{dif_x}, we obtain
\begin{align*}
  \n{x^{k+1}-x^k}^2 \leq \frac 1 2 \n{x^k-x^{k-1}}^2 - \frac 1 2
  \n{x^{k+1}-x^k}^2 + 2\la_k\th_k(f(x^{k-1})-f(x^{k})).
\end{align*}
Finally, using the produced estimate for $\n{x^{k+1}-x^k}^2$ in
\eqref{eq:norms}, we deduce the desired inequality~\eqref{eq:lemma_ineq}.
\end{proof}

The above lemma already might give a good hint why our method
works. From inequality~\eqref{eq:lemma_ineq} together with condition
$\la_k^2\leq (1+\th_{k-1})\la_{k-1}^2$, we obtain that the Lyapunov
energy---the left-hand side of \eqref{eq:lemma_ineq}---is
decreasing. This gives us boundedness of $(x^k)$, which is often the
key ingredient for proving convergence. In the next theorem we
formally state our result.
\begin{theorem}\label{th:main}
    Suppose that $f\colon \R^d\to \R$ is convex with locally Lipschitz
    gradient $\nabla f$. Then $(x^k)$ generated by Algorithm~\ref{alg:main} converges to a solution of \eqref{main} and we have
    that
\[f(\hat
    x^k)-f_* \leq \frac{D}{2S_k}=\mathcal{O}\Bigl(\frac{1}{k}\Bigr),\]
where
\begin{align*}
\hat x^k &=  \frac{\la_k(1+\th_k)x^k +
        \sum_{i=1}^{k-1}w_i
           x^i}{S_k},\\
    w_i &= \la_i(1+\th_i)-\la_{i+1}\th_{i+1},\\
  S_k &= \la_k(1+\th_k) + \sum_{i=1}^{k-1}w_i = \sum_{i=1}^k \la_i + \la_1\th_1,
\end{align*}
and $D$ is a constant that explicitly depends on the initial data and
the solution set, see \eqref{eq:telescope}.

\end{theorem}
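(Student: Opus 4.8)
The plan is to read the left-hand side of \eqref{eq:lemma_ineq} as a Lyapunov energy and run a four-stage argument: monotonicity and boundedness, a lower bound on the stepsizes, a telescoping estimate yielding the ergodic rate, and an Opial/Fej\'er argument for convergence of the iterates. First I would fix a solution $x^*$ and set
\[ \Phi_k = \n{x^k-x^*}^2 + \tfrac12\n{x^k-x^{k-1}}^2 + 2\la_k\th_k(f(x^{k-1})-f_*), \]
so that \eqref{eq:lemma_ineq} reads $\n{x^{k+1}-x^*}^2+\tfrac12\n{x^{k+1}-x^k}^2+2\la_k(1+\th_k)(f(x^k)-f_*)\le\Phi_k$. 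The algorithm's first rule, applied at the next step, is $\la_{k+1}^2\le(1+\th_k)\la_k^2$, which is exactly $\la_{k+1}\th_{k+1}\le\la_k(1+\th_k)$; this lets me reinterpret the leftover function term and obtain $\Phi_{k+1}+2w_k(f(x^k)-f_*)\le\Phi_k$ with $w_k=\la_k(1+\th_k)-\la_{k+1}\th_{k+1}\ge0$. Since $f(x^k)\ge f_*$, the energy $\Phi_k$ is nonincreasing; being nonnegative it converges, so in particular $(x^k)$ is bounded and $\sum_k w_k(f(x^k)-f_*)<\infty$.

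Next, because the whole trajectory lies in a compact set, local Lipschitzness of $\nabla f$ furnishes a constant $L$ with $\n{\nabla f(x^k)-\nabla f(x^{k-1})}\le L\n{x^k-x^{k-1}}$ along the iterates, so the curvature term defining $\la_k$ is at least $\tfrac1{2L}$. A short induction then gives $\la_k\ge\min\{\la_0,\tfrac1{2L}\}=:\ell>0$: whenever $\la_k$ falls below $\tfrac1{2L}$ it must equal $\sqrt{1+\th_{k-1}}\la_{k-1}\ge\la_{k-1}$. Consequently $S_k=\sum_{i=1}^k\la_i+\la_1\th_1\ge k\ell\to\infty$, which is what will later turn the numerator bound into the $\mathcal O(1/k)$ rate.

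For the rate I would sum \eqref{eq:lemma_ineq} over $k=1,\dots,n$. The norm terms telescope, and the two function sums recombine so that the coefficient of $f(x^i)-f_*$ is $2w_i$ for $1\le i\le n-1$, equals $2\la_n(1+\th_n)$ for $i=n$, and the stray $i=0$ term produces the constant $D=\n{x^1-x^*}^2+\tfrac12\n{x^1-x^0}^2+2\la_1\th_1(f(x^0)-f_*)$. Discarding the nonnegative end terms leaves $\la_n(1+\th_n)(f(x^n)-f_*)+\sum_{i=1}^{n-1}w_i(f(x^i)-f_*)\le\tfrac D2$. As the weights are nonnegative and sum to $S_n$, convexity (Jensen) applied to $\hat x^n$ gives precisely $f(\hat x^n)-f_*\le\frac{D}{2S_n}=\mathcal O(1/k)$. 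The same inequality yields $\min_{i\le n}(f(x^i)-f_*)\le\frac{D}{2S_n}\to0$, so some subsequence $x^{k_j}$ is minimizing, and by boundedness and continuity one of its cluster points $\bar x$ satisfies $f(\bar x)=f_*$, i.e.\ solves \eqref{main}.

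Finally I would upgrade this to convergence of the whole sequence. Since $\Phi_k$ equals $\n{x^k-x^*}^2$ plus a term independent of $x^*$, for any two solutions $x_1^*,x_2^*$ the difference $\n{x^k-x_1^*}^2-\n{x^k-x_2^*}^2$ converges; hence $\lr{x^k,x_1^*-x_2^*}$ converges and the sequence cannot have two distinct solution cluster points. It therefore suffices to show that \emph{every} cluster point is a solution, and the crux for this is asymptotic regularity $\n{x^{k+1}-x^k}\to0$, which together with $\la_k\ge\ell$ forces $\n{\nabla f(x^k)}\to0$ and hence $\nabla f=0$ at every cluster point. This asymptotic regularity is the step I expect to be the main obstacle. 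A promising route is the one-step estimate hidden inside the proof of Lemma~\ref{lemma:energy}, which after rearrangement reads $\tfrac32\n{x^{k+1}-x^k}^2\le u_k$ with $u_k:=\tfrac12\n{x^k-x^{k-1}}^2+2\la_k\th_k(f(x^{k-1})-f_*)$, and feeds into the contraction $u_k\le\tfrac13u_{k-1}+2\la_k\th_k(f(x^{k-1})-f_*)$; the delicate point, to be settled using the convergence of the energy together with the lower bound on $\la_k$, is that the forcing term $\la_k\th_k(f(x^{k-1})-f_*)$ tends to zero, after which $u_k\to0$ and the Opial argument closes.
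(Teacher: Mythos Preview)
Your first three stages---energy monotonicity, the lower bound $\la_k\ge\tfrac{1}{2L}$ (note that $\la_0$ is irrelevant since $\th_0=+\infty$ forces $\la_1$ to equal the curvature term), and the Jensen/telescoping argument for the $\mathcal O(1/k)$ rate---are correct and coincide with the paper's Part~1.

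The gap is in your convergence argument. You correctly isolate the crux as showing $\la_k\th_k(f(x^{k-1})-f_*)\to 0$, but this \emph{cannot} be settled from ``convergence of the energy together with the lower bound on $\la_k$'' alone. The energy $\Phi_k$ converging only tells you that the \emph{sum} of three nonnegative terms converges; the individual terms can still oscillate. Moreover, $\la_k$ need not be bounded above (the first rule allows geometric growth, and the curvature term can be $+\infty$), so $\la_k\th_k$ can be unbounded; and you have no control on $f(x^{k-1})-f_*$ along the full sequence, only along a minimizing subsequence. So your contraction $u_k\le\tfrac13 u_{k-1}+c_k$ is correct, but you have no way to force $c_k\to 0$, and the argument stalls exactly where you flagged it.

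The paper closes this differently: once $(x^k)$ is known to be bounded and $\nabla f$ is $L$-Lipschitz on $\mathcal C=\clconv\{x^*,x^0,x^1,\dots\}$, it replaces the bare convexity inequality~\eqref{eq:conv} by the cocoercivity bound (Lemma~\ref{lemma:coco}),
\[
\la_k\lr{\nabla f(x^k),x^*-x^k}\le \la_k(f_*-f(x^k))-\frac{\la_k}{2L}\n{\nabla f(x^k)}^2,
\]
which adds a term $\frac{\la_k}{L}\n{\nabla f(x^k)}^2$ to the left-hand side of~\eqref{eq:lemma_ineq}. Telescoping now yields $\sum_k\la_k\n{\nabla f(x^k)}^2<\infty$, and since $\la_k\ge\tfrac{1}{2L}$ this gives $\nabla f(x^k)\to 0$ directly---no asymptotic regularity needed. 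Every cluster point is then a solution, and the Opial-type Lemma~\ref{opial-like} (equivalent to your difference-of-squares argument) finishes. The missing idea in your proposal is precisely this second use of local smoothness, via cocoercivity, to extract summability of $\n{\nabla f(x^k)}^2$.
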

Our proof will consist of two parts. The first one is a
straightforward application of Lemma~\ref{lemma:energy}, from which we
derive boundedness of $(x^k)$ and complexity result. Due to its
conciseness, we provide it directly after this remark. In the second
part, we prove that the whole sequence $(x^k)$ converges to a
solution. Surprisingly, this part is a bit more technical than
expected, and thus we postpone it to the appendix.

\begin{proof} \textit{(Boundedness and complexity result.)}

    Fix any $x^*$ from the solution set of \cref{main}.  Telescoping
    inequality~\eqref{eq:lemma_ineq}, we deduce
    \begin{multline}\label{eq:telescope}
        \n{x^{k+1}-x^*}^2+ \frac 1 2 \n{x^{k+1}-x^k}^2  + 2\la_k
  (1+\th_k) (f(x^k)-f_*) \\+
  2\sum_{i=1}^{k-1}[\la_i(1+\th_i)-\la_{i+1}\th_{i+1}](f(x^i)-f_*) \\
  \leq \, \n{x^1-x^*}^2  + \frac 1 2 \n{x^1-x^{0}}^2   + 2\la_1 \th_1 [f(x^{0})-f_*]\eqdef D.
\end{multline}
Note that by definition of $\la_k$, the second line above is always
nonnegative. Thus, the sequence $(x^k)$ is bounded. Since $\nabla f$
is locally Lipschitz, it is Lipschitz continuous on bounded sets. It
means that for the set
$\mathcal{C} = \clconv \{x^*, x^0, x^1,\dots\}$, which is bounded as the convex hull of bounded points, there exists $L>0$ such that
\[\n{\nabla f(x)-\nabla f(y)} \leq L \n{x-y} \quad \forall
    x,y\in \mathcal{C}.\]
    Clearly, $\la_1=\frac{\n{x^1-x^0}}{2\n{\nabla
        f(x^1)-\nabla f(x^0)}}\geq
\frac{1}{2L}$, thus, by induction one can prove that
\(\la_k \geq \frac{1}{2L} \), in other words, the sequence
$(\la_k)$ is separated from zero.

Now we want to apply the Jensen's inequality for the sum of all terms
$f(x^i)-f_*$ in the left-hand side of \eqref{eq:telescope}.  Notice, that the
total sum of coefficients at these terms is
\[\la_k(1+\th_k)
    +\sum_{i=1}^{k-1}[\la_i(1+\th_i)-\la_{i+1}\th_{i+1}]
    =\sum_{i=1}^k \la_i + \la_1\th_1=S_k\]
Thus, by Jensen's inequality,
\[\frac D 2 \geq \frac{\text{LHS of \eqref{eq:telescope}}}{2} \geq S_k (f(\hat
    x^k)-f_*),\]
where $\hat x^k$ is given in the statement of the theorem.
By this, the first part of the proof is complete. Convergence of $(x^k)$ to a solution is provided in the appendix.
\end{proof}
As we have shown that $\la_i\geq \frac{1}{2L}$ for all $i$, we have a
theoretical upper bound $f(\hat x^k)-f_*\leq \frac{D L}{k}$. Note that
in practice, however, $(\la_k)$ might be much larger than the
pessimistic lower bound $\frac{1}{2L}$, which we observe in our experiments
together with a faster convergence.

\subsection{$f$ is locally strongly convex}
Since one of our goals is to make optimization easy to use, we believe that
a good method should have state-of-the-art guarantees in various scenarios. For strongly convex functions, this means that we want to see linear convergence, which is not covered by normalized GD or online methods. In
\cref{subs:main} we have shown that Algorithm~\ref{alg:main} matches
the $\mathcal{O}(1/\e)$ complexity of GD on convex problems. Now we show that it also matches
$\mathcal{O}(\frac{L}{\mu}\log\frac{1}{\e})$ complexity of GD when $f$
is locally strongly convex. Similarly to local smoothness, we
call $f$ \emph{locally strongly convex} if it is strongly convex over any
compact set of its domain.

For proof simplicity, instead of using bound
$\la_k \leq \sqrt{1+\th_{k-1}}\la_{k-1}$ as in step~4 of
Algorithm~\ref{alg:main} we will use a more conservative bound
$\la_k\leq \sqrt{1+\frac{\th_{k-1}}{2}}\la_{k-1}$ (otherwise the
derivation would be too technical). It is clear that with such a
change \Cref{th:main} still holds true, so the sequence is bounded and we can rely on local smoothness and local strong convexity.
\begin{theorem}\label{th:strong}
    Suppose that $f\colon \R^d\to \R$ is locally strongly convex and
    $\nabla f$ is locally Lipschitz.  Then $(x^k)$ generated by
    Algorithm~\ref{alg:main} (with the modification mentioned above)
    converges to the solution $x^*$ of \eqref{main}. The complexity to
    get $\|x^k - x^*\|^2\le \varepsilon$ is
    $\mathcal{O}(\kappa \log\frac 1 \e)$, where
    $\kappa = \frac{L}{\mu}$ and $L,\mu$ are the smoothness and strong
    convexity constants of $f$ on the set
    $\mathcal{C}=\clconv\{x^*, x^0, x^1, \dots\}$.
\end{theorem}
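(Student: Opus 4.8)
\textbf{Proof plan for Theorem~\ref{th:strong}.}

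The plan is to reuse the two-step Lyapunov machinery from Lemma~\ref{lemma:energy}, but to strengthen the convexity inequality \eqref{eq:conv} using strong convexity, so that the energy contracts geometrically rather than merely decreasing. Since Theorem~\ref{th:main} already guarantees that $(x^k)$ stays in the bounded set $\mathcal{C}=\clconv\{x^*,x^0,x^1,\dots\}$, I may freely invoke both the local smoothness constant $L$ and the local strong convexity constant $\mu$ on $\mathcal{C}$, and in particular the lower bound $\la_k\ge \frac{1}{2L}$ established in the first proof. First I would revisit the estimate $2\la_k\lr{\nabla f(x^k),x^*-x^k}\le 2\la_k(f_*-f(x^k))$: by $\mu$-strong convexity this improves to $2\la_k\lr{\nabla f(x^k),x^*-x^k}\le 2\la_k(f_*-f(x^k)) - \mu\la_k\n{x^k-x^*}^2$, which injects a new negative multiple of $\n{x^k-x^*}^2$ on the right-hand side. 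Carrying this extra term through the identical algebra that produced \eqref{eq:lemma_ineq} yields a contractive version of the lemma, namely
\begin{multline*}
\n{x^{k+1}-x^*}^2+ \tfrac12\n{x^{k+1}-x^k}^2 + 2\la_k(1+\th_k)(f(x^k)-f_*)\\
\le (1-\mu\la_k)\n{x^k-x^*}^2 + \tfrac12\n{x^k-x^{k-1}}^2 + 2\la_k\th_k(f(x^{k-1})-f_*).
\end{multline*}

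Next I would bundle the three quantities $\n{x^{k+1}-x^*}^2$, $\tfrac12\n{x^{k+1}-x^k}^2$, and the functional-gap term $2\la_k(1+\th_k)(f(x^k)-f_*)$ into a single Lyapunov energy $\Psi_k$ and argue that each piece on the right is dominated by a fixed factor $(1-q)$ times the corresponding piece of $\Psi_{k-1}$, for some contraction rate $q=\Omega(\mu/L)=\Omega(1/\kappa)$. The strongly convex correction handles the first term directly, since $\mu\la_k\ge \mu/(2L)=\Omega(1/\kappa)$. For the squared-difference and functional-gap terms I would use strong convexity again in the auxiliary direction: strong convexity gives $f(x^{k-1})-f_*\ge \tfrac{\mu}{2}\n{x^{k-1}-x^*}^2$ and, more usefully, lets me trade a fraction of the gap term against a negative multiple of a distance already present, so that the coefficient ratio between consecutive energies is bounded away from $1$. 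The modified stepsize rule $\la_k\le\sqrt{1+\tfrac{\th_{k-1}}{2}}\la_{k-1}$ is exactly what keeps the growth of the coefficients $\la_k(1+\th_k)$ slow enough that a uniform contraction factor survives; this is why the authors weaken step~4 here.

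Once a recursion of the form $\Psi_{k+1}\le(1-q)\Psi_k$ is in place with $q=\Omega(1/\kappa)$, unrolling gives $\Psi_k\le(1-q)^{k}\Psi_0$, and since $\n{x^k-x^*}^2\le \Psi_k$ the claimed complexity $\mathcal{O}(\kappa\log\frac1\e)$ follows from $(1-q)^k\le\e$, i.e.\ $k\ge \frac{1}{q}\log\frac{\Psi_0}{\e}=\mathcal{O}(\kappa\log\frac1\e)$. Convergence of the whole sequence to $x^*$ is then immediate from $\n{x^k-x^*}^2\to0$, so no separate appendix argument is needed in the strongly convex case.

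\textbf{Main obstacle.} I expect the delicate point to be the simultaneous contraction of all three summands of $\Psi_k$ with one common rate $q$. The cross-coupling introduced by the two-step structure means the difference term $\tfrac12\n{x^k-x^{k-1}}^2$ and the look-back gap $2\la_k\th_k(f(x^{k-1})-f_*)$ do not obviously shrink on their own; I would have to absorb them by borrowing from the strongly convex surplus $\mu\la_k\n{x^k-x^*}^2$ and by controlling the ratio $\th_k=\la_k/\la_{k-1}$, using $\la_k\ge\frac{1}{2L}$ to bound $\th_k$ from below and the conservative step rule to bound the energy coefficients' growth from above. Pinning down an explicit admissible $q$ in terms of $\mu$ and $L$ — and verifying it is consistent across the worst case of the stepsize oscillation — is where the bookkeeping will be heaviest.
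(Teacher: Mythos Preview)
Your high-level plan matches the paper's: strengthen Lemma~\ref{lemma:energy} via strong convexity, package the three terms into an energy $\Psi_k$, and extract a geometric contraction $\Psi_{k+1}\le(1-q)\Psi_k$ with $q=\Omega(1/\kappa)$. You also correctly identify that the modified rule $\la_k\le\sqrt{1+\th_{k-1}/2}\,\la_{k-1}$ is precisely what makes the functional-gap coefficient contract, and you correctly flag the simultaneous contraction of all three summands as the hard part.

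Where your proposal has a genuine gap is the middle term. After your strengthening you obtain
\[
\n{x^{k+1}-x^*}^2+\tfrac12\n{x^{k+1}-x^k}^2+\cdots \le (1-\mu\la_k)\n{x^k-x^*}^2+\tfrac12\n{x^k-x^{k-1}}^2+\cdots,
\]
so the coefficient of $\n{x^k-x^{k-1}}^2$ on the right is \emph{exactly} $\tfrac12$, identical to the coefficient of $\n{x^{k+1}-x^k}^2$ on the left --- there is no contraction in that slot at all. Your suggested fix, ``borrowing from the strongly convex surplus $\mu\la_k\n{x^k-x^*}^2$'', does not work: there is no inequality relating $\n{x^k-x^{k-1}}^2$ to $\n{x^k-x^*}^2$ with a usable constant (the ratio can be arbitrarily large in either direction). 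You need an \emph{independent} source of a negative multiple of $\n{x^{k+1}-x^k}^2$.

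The paper supplies this missing piece by invoking the cocoercivity inequality (Lemma~\ref{lemma:coco}) in addition to strong convexity when bounding $\la_k\lr{\nabla f(x^k),x^*-x^k}$. Cocoercivity contributes an extra $-\frac{1}{2L\la_k}\n{x^{k+1}-x^k}^2$; then the paper uses the \emph{upper} bound $\la_k\le\frac{1}{2\mu}$ (which follows from strong convexity applied to the second stepsize rule, and which you do not mention) to convert this into $-\frac{\mu}{L}\n{x^{k+1}-x^k}^2$. Averaging the two bounds gives the combined estimate
\[
\la_k\lr{\nabla f(x^k),x^*-x^k}\le \la_k(f_*-f(x^k))-\tfrac{\la_k\mu}{4}\n{x^k-x^*}^2-\tfrac{\mu}{2L}\n{x^{k+1}-x^k}^2,
\]
so the left-hand side of the strengthened lemma now carries $\tfrac12(1+\tfrac{2\mu}{L})\n{x^{k+1}-x^k}^2$, and the contraction factor for that slot is $(1+2\mu/L)^{-1}$. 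Together with $\th_k\ge 1/\kappa$ (from $\la_k\in[\tfrac{1}{2L},\tfrac{1}{2\mu}]$) for the gap term, all three factors are at most $1-\tfrac{1}{4\kappa}$.
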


We want to highlight that in our rate $\kappa$ depends on the local
Lipschitz and strong convexity constants $L$ and $\mu$, which is
meaningful even when these properties are not satisfied
globally. Similarly, if $f$ is globally smooth and strongly convex,
our rate is still faster as it depends on the smaller local constants.

\section{Heuristics}
In this section, we describe several extensions of our method. We do
not have a full theory for them, but believe that they are of interest
in applications.
\subsection{Acceleration}
\begin{algorithm}[t]
 \caption{Adaptive accelerated gradient descent}
 \label{alg:accel}
 \begin{algorithmic}[1]
     \STATE \textbf{Input:} $x^0 \in \R^d$, $\la_0>0$, $\Lambda_0>0$,
     $\th_0=\Th_0 = +\infty$\\
     \STATE $y^1 = x^1= x^0-\la_0\nabla f(x^0)$
        \FOR{$k = 1,2,\dots$}
        \STATE $\la_k = \min\Bigl\{
        \sqrt{1+\frac{\th_{k-1}}{2}}\la_{k-1},\frac{\n{x^{k}-x^{k-1}}}{2\n{\nabla
                f(x^{k})-\nabla f(x^{k-1})}}\Bigr\}$
        \STATE $\Lambda_k = \min\Bigl\{\sqrt{1 +
                \frac{\Th_{k-1}}{2}}\Lambda_{k-1}, \frac{\| \nabla f(x^k) -
                \nabla f(x^{k-1}) \|}{2\|x^k - x^{k-1}\|} \Bigr\}$
        \STATE $\beta_k =\frac{\sqrt{1/\lambda_k} - \sqrt{\Lambda_k}}{\sqrt{1/\lambda_k}+\sqrt{\Lambda_k}} $
        \STATE $y^{k+1} = x^k - \la_k \nabla f(x^k)$
        \STATE $x^{k+1}=y^{k+1} + \beta_k (y^{k+1}-y^k)$
\STATE $\th_k = \frac{\la_k}{\la_{k-1}}$, $\Th_k = \frac{\Lambda_k}{\Lambda_{k-1}}$
        \ENDFOR
 \end{algorithmic}
 \end{algorithm}

Suppose that $f$ is $\mu$-strongly convex.
One version of the  accelerated gradient method proposed by
Nesterov~\cite{Nesterov2013} is
\begin{align*}
        y^{k+1} &= x^k  - \frac{1}{L}\nabla f(x^k),\\
        x^{k+1}&= y^{k+1} + \beta(y^{k+1} - y^{k}),
\end{align*}
where $\beta = \frac{\sqrt{L} - \sqrt{\mu}}{\sqrt{L}+\sqrt{\mu}}$.
Adaptive gradient descent for
 strongly convex $f$ efficiently estimated $\frac{1}{2L}$ by
\begin{align*}
        \lambda_k = \min\biggl\{\sqrt{1 + \frac{\th_{k-1}}{2}}\lambda_{k-1}, \frac{\|x^k - x^{k-1}\|}{2\|\nabla f(x^k) -\nabla f(x^{k-1})\|}\biggr\}.
\end{align*}
What about the strong convexity constant $\mu$?  We know that it
 equals to the inverse smoothness constant of
the conjugate $f^*(y) \eqdef \sup_x\{\lr{x,y} - f(x) \}$. Thus, it is
tempting to estimate this inverse constant just as we estimated inverse
smoothness of $f$, i.e.,\ by formula
\begin{align*}
        \Lambda_k = \min\biggl\{\sqrt{1 +\frac{\Th_{k-1}}{2}} \Lambda_{k-1}, \frac{\| p^k - p^{k-1} \|}{2\|\nabla f^*(p^k) - \nabla f^*(p^{k-1})\|} \biggr\}
\end{align*}
where $p^k$ and $p^{k-1}$ are some elements of the dual space and
$\Th_k = \frac{\Lambda_k}{\Lambda_{k-1}}$. A natural choice then is $p^k = \nabla f(x^k)$ since it is an element of the dual space that we use. What is its value? It is well known that $\nabla f^*(\nabla f(x))=x$, so we come up with the update rule
\begin{align*}
        \Lambda_k = \min\biggl\{\sqrt{1 + \frac{\Th_{k-1}}{2}}\Lambda_{k-1}, \frac{\| \nabla f(x^k) - \nabla f(x^{k-1}) \|}{2\|x^k - x^{k-1}\|} \biggr\},
\end{align*}
and hence we can estimate $\beta$ by $
        \beta_k = \frac{\sqrt{1/\lambda_k} - \sqrt{\Lambda_k}}{\sqrt{1/\lambda_k}+\sqrt{\Lambda_k}}$.

We summarize our arguments in
Algorithm~\ref{alg:accel}. Unfortunately, we do not have any
theoretical guarantees for it.

 Estimating strong convexity parameter $\mu$ is important in
 practice. Most common approaches rely on restarting technique
 proposed by \cite{Nesterov2013a}, see
 also~\cite{fercoq2017adaptive} and references therein. Unlike
 Algorithm~\ref{alg:accel}, these works have theoretical guarantees,
 however, the methods themselves are more complicated and still
 require tuning of other unknown parameters.

 \subsection{Uniting our steps with stochastic gradients}
Here we would like to discuss applications of our method to the problem
\begin{align*}
        \min_x \E{f_\xi(x)},
\end{align*}
where $f_\xi$ is almost surely $L$-smooth and $\mu$-strongly convex. Assume that at each iteration we get sample $\xi^k$ to make a stochastic gradient step,
\begin{align*}
x^{k+1}=x^k - \la_k\nabla f_{\xi^k}(x^k).
\end{align*}
Then, we have two ways of incorporating our stepsize into SGD.
The first is to reuse $\nabla f_{\xi^k}(x^k)$ to estimate $L_k=\frac{\n{\nabla f_{\xi^k}(x^{k})-\nabla
        f_{\xi^k}(x^{k-1})}}{\n{x^{k}-x^{k-1}}}$, but this would make
        $\la_k\nabla f_{\xi^k}(x^k)$ biased. Alternatively, one can use an extra
        sample to estimate $L_k$, but this is less intuitive since our goal is to estimate
        the curvature of the function used in the update.

        We give a full description in~\Cref{alg:stoch}. We remark that
        the option with a biased estimate performed much better in our
        experiments with neural networks. The theorem below provides
        convergence guarantees for both cases, but with different
        assumptions.

\begin{algorithm}[t]
 \caption{Adaptive SGD}
 \label{alg:stoch}
 \begin{algorithmic}[1]
     \STATE \textbf{Input:} $x^0 \in \R^d$, $\la_0>0$,
     $\th_0=+\infty$, $\xi^0$, $\alpha>0$
     \STATE $x^1 = x^0-\la_0\nabla f_{\xi^0}(x^0)$
        \FOR{$k = 1,2,\dots$}
        \STATE Sample $\xi^k$ and optionally $\zeta^k$
        \STATE Option I (biased):  $L_k = \frac{\n{\nabla
        f_{\xi^k}(x^{k})-\nabla f_{\xi^k}(x^{k-1})}}{\n{x^{k}-x^{k-1}}}$
        \STATE Option II (unbiased):  $L_k = \frac{\n{\nabla
        f_{\zeta^k}(x^{k})-\nabla f_{\zeta^k}(x^{k-1})}}{\n{x^{k}-x^{k-1}}}$
        \STATE $\la_k = \min\Bigl\{
        \sqrt{1+\th_{k-1}}\la_{k-1},\frac{\alpha}{ L_k}\Bigr\}$
\STATE $x^{k+1} = x^k - \la_k \nabla f_{\xi^k}(x^k)$
\STATE $\th_k = \frac{\la_k}{\la_{k-1}}$
        \ENDFOR
 \end{algorithmic}
 \end{algorithm}

\begin{theorem}
        Let $f_\xi$ be $L$-smooth and $\mu$-strongly convex almost surely. Assuming $\alpha\le \frac{1}{2\kappa}$ and estimating $L_k$ with $\nabla f_{\zeta^k}$, the complexity to get $\E{\n{x^k- x^*}^2}\le \varepsilon$ is not worse than $\mathcal{O}\left(\frac{\kappa^2}{\varepsilon}\log \frac{\kappa}{\varepsilon}\right)$. Furthermore, if the model is overparameterized, i.e.,\ $\nabla f_\xi(x^*)=0$ almost surely, then one can estimate $L_k$ with $\xi^k$ and the complexity is $\mathcal{O}\left(\kappa^2 \log \frac{1}{\varepsilon}\right)$.
\end{theorem}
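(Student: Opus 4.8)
The plan is to track the Lyapunov quantity $r_k=\E{\n{x^k-x^*}^2}$ and to show it obeys a contraction with, respectively, an additive noise floor (Option~II) or no noise at all (Option~I). The starting observation, common to both cases, is a two-sided bound on the adaptive stepsize. Since each $f_\xi$ (resp.\ $f_\zeta$) is almost surely $\mu$-strongly convex and $L$-smooth, every realized curvature estimate satisfies $\mu\le L_k\le L$, so $\frac{\a}{L}\le\frac{\a}{L_k}\le\frac{\a}{\mu}$. An easy induction (identical in spirit to the one after \eqref{eq:telescope}: $\th_0=+\infty$ forces $\la_1=\a/L_1\ge\a/L$, and $\sqrt{1+\th_{k-1}}\ge1$ propagates the lower bound) then yields the path-wise bounds $\frac{\a}{L}\le\la_k\le\frac{\a}{\mu}$ for all $k\ge1$. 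This is the only place where the curvature estimates enter the stepsize itself.

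For Option~II I would condition on the $\sigma$-algebra $\mathcal F_k$ generated by the past and by the independent sample $\zeta^k$, which renders $\la_k$ deterministic while keeping $\E{\nabla f_{\xi^k}(x^k)\mid\mathcal F_k}=\nabla f(x^k)$ unbiased. Expanding $\n{x^{k+1}-x^*}^2$, using strong convexity of $f$ for the inner product and co-coercivity of each $f_{\xi^k}$ to get $\E{\n{\nabla f_{\xi^k}(x^k)}^2\mid\mathcal F_k}\le 4L(f(x^k)-f_*)+2\s^2$ with $\s^2=\E{\n{\nabla f_\xi(x^*)}^2}$, gives
\[\E{\n{x^{k+1}-x^*}^2\mid\mathcal F_k}\le(1-\mu\la_k)\n{x^k-x^*}^2-2\la_k(1-2L\la_k)(f(x^k)-f_*)+2\la_k^2\s^2.\]
The assumption $\a\le\frac1{2\kappa}$ forces $\la_k\le\frac{\a}{\mu}\le\frac1{2L}$, so the middle term drops, and the stepsize bounds collapse this to the scalar recursion $r_{k+1}\le(1-\frac{\a}{\kappa})r_k+\frac{2\a^2\s^2}{\mu^2}$. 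Unrolling gives $r_k\le(1-\frac{\a}{\kappa})^k r_1+\frac{2\a\kappa\s^2}{\mu^2}$; choosing $\a\asymp\frac{\e\mu^2}{\kappa\s^2}$ so that the noise floor is below $\e/2$ and counting the steps needed to decay the geometric term below $\e/2$ produces the claimed $\mathcal O\!\left(\frac{\kappa^2}{\e}\log\frac{\kappa}{\e}\right)$.

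The overparameterized case is where I would be more careful, since Option~I reuses $\xi^k$ to build $L_k$, so $\la_k$ is correlated with the stochastic gradient and the expectation argument above is unavailable. The remedy is to avoid expectations entirely and derive a path-wise contraction. With $\nabla f_{\xi^k}(x^*)=0$, strong convexity of $f_{\xi^k}$ gives $\lr{\nabla f_{\xi^k}(x^k),x^k-x^*}\ge\mu\n{x^k-x^*}^2$, and co-coercivity of $f_{\xi^k}$ gives $\n{\nabla f_{\xi^k}(x^k)}^2\le L\lr{\nabla f_{\xi^k}(x^k),x^k-x^*}$. Substituting both into the expansion of $\n{x^{k+1}-x^*}^2$ and using $\la_k L\le\a\kappa\le\frac12$ (again from $\a\le\frac1{2\kappa}$ and $L_k\ge\mu$) collapses everything to $\n{x^{k+1}-x^*}^2\le(1-\tfrac32\mu\la_k)\n{x^k-x^*}^2$, valid for every realization. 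Since $\la_k\ge\a/L$, this is a deterministic linear contraction with factor $1-\tfrac{3\a}{2\kappa}$, and taking $\a$ a fixed constant of order $1/\kappa$ yields $\mathcal O(\kappa^2\log\frac1\e)$.

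The main obstacle is exactly this correlation in Option~I: one cannot factor $\la_k$ out of $\la_k\nabla f_{\xi^k}(x^k)$ inside an expectation, so the standard unbiased-SGD bookkeeping breaks. Overparameterization rescues it by making the one-step inequality hold surely rather than merely in expectation. Conversely, in the general Option~II setting the price of the independent $\zeta^k$ is the residual variance $\s^2$, whose noise floor forces the $\e$-dependent tuning of $\a$ and hence the extra $1/\e$ and logarithmic factors in the rate.
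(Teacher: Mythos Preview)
Your proposal is correct and follows essentially the same route as the paper: both first pin down $\frac{\a}{L}\le\la_k\le\frac{\a}{\mu}$, then for Option~II exploit the independence of $\zeta^k$ and $\xi^k$ to keep the update unbiased and combine strong convexity with the variance bound $\E{\n{\nabla f_\xi(x)}^2}\le 4L(f(x)-f_*)+2\s^2$ to get a contraction-plus-noise recursion, while for the overparameterized Option~I both derive a \emph{sure} one-step contraction so that the $\la_k$--$\xi^k$ correlation is irrelevant. The only cosmetic difference is that in the overparameterized case the paper channels the argument through function values ($f_{\xi^k}(x^k)-f_{\xi^k}(x^*)$) whereas you use the equivalent co-coercivity/strong-monotonicity pair on gradients; the resulting contraction factors ($1-\la_k\mu$ versus your $1-\tfrac32\la_k\mu$) and hence the final $\mathcal O(\kappa^2\log\frac1\e)$ complexity are the same up to constants.
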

Note that in both cases we match the known dependency on $\varepsilon$ up to logarithmic terms, but we get an extra $\kappa$ as the price for adaptive estimation of the stepsize.

Another potential application of our techniques is estimation of decreasing stepsizes in SGD. The best known rates for SGD~\cite{stich2019unified}, are obtained using $\la_k$ that evolves as $\mathcal{O}\left(\frac{1}{L+\mu k}\right)$. This requires estimates of both smoothness and strong convexity, which can be borrowed from the previous discussion. We leave rigorous proof of such schemes for future work.
\section{Experiments}
In the experiments\footnote{See
  \href{https://github.com/ymalitsky/adaptive_GD}{https://github.com/ymalitsky/adaptive\_gd}},
we compare our approach with the two most related methods: GD and
Nesterov's accelerated method for convex
functions~\cite{Nesterov1983a}. Additionally, we consider line search,
Polyak step, and Barzilai-Borwein method. For neural networks we also
include a comparison with SGD, SGDm and Adam.

\paragraph{Logistic regression.}
The  logistic loss with $\ell_2$-regularization is given by
$\frac{1}{n}\sum_{i=1}^n \log(1 + \exp(-b_i a_i^\top x)) +
\frac{\gamma}{2}\|x\|^2$, where $n$ is the number of observations,
$\gamma>0$ is a regularization parameter, and
$(a_i, b_i)\in\R^{d}\times \R$, $i=1,\dots, n$, are the observations. We
use `mushrooms' and `covtype' datasets to run the experiments. We choose $\gamma$
proportionally to $\frac{1}{n}$ as often done in practice. Since we
have closed-form expressions to estimate $L=\frac{1}{4n}\|A\|^2+\gamma$, where $A=(a_1^\top, \dotsc, a_n^\top)^\top$, we used stepsize $\frac{1}{L}$ in GD and its acceleration. The
results are provided in Figure~\ref{fig:logistic}.

\paragraph{Matrix factorization.}

Given a matrix $A\in \R^{m\times n}$ and $r<\min\{m,n\}$, we want to
solve $\min_{X=[U,V]} f(X)=f(U,V)=\frac 1 2 \n{UV^\top-A}^2_F$ for
$U\in \R^{m\times r}$ and $V\in \R^{n\times r}$.  It is a nonconvex
problem, and the gradient $\nabla f$ is not globally
Lipschitz. With some tuning, one still can apply GD and Nesterov's
accelerated method, but---and we want to emphasize it---it was not a
trivial thing to find the steps in practice.  The steps we have chosen
were almost optimal, namely, the methods did not  converge if we
doubled the steps.  In contrast, our methods do
not require any tuning, so even in this regard they are much more
practical. For the experiments we used Movilens 100K
dataset~\cite{harper2016movielens} with more than million entries and
several values of $r=10,\ 20,\ 30$. All algorithms were initialized at
the same point, chosen randomly. The results are presented in
\Cref{fig:factorization}.

\paragraph{Cubic regularization.}
\begin{figure*}[!t]
\centering
\begin{subfigure}[t]{0.32\textwidth}
    {\includegraphics[trim={3mm 0 3mm 0},clip,width=1\textwidth]{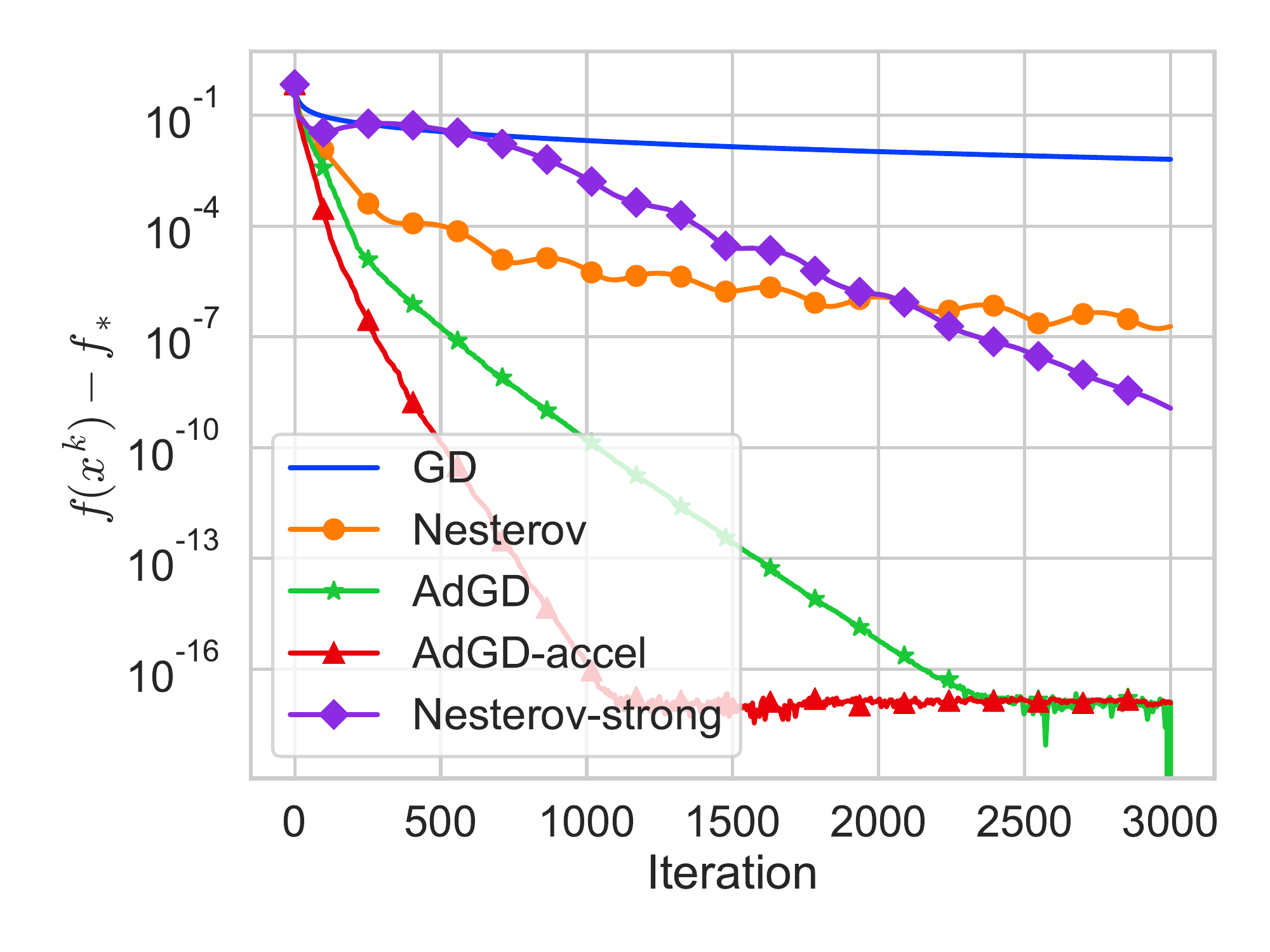}}
\caption{Mushrooms dataset, objective}
\end{subfigure}
\hfill
\begin{subfigure}[t]{0.32\textwidth}
{\includegraphics[trim={3mm 0 3mm 0},clip,width=1\textwidth]{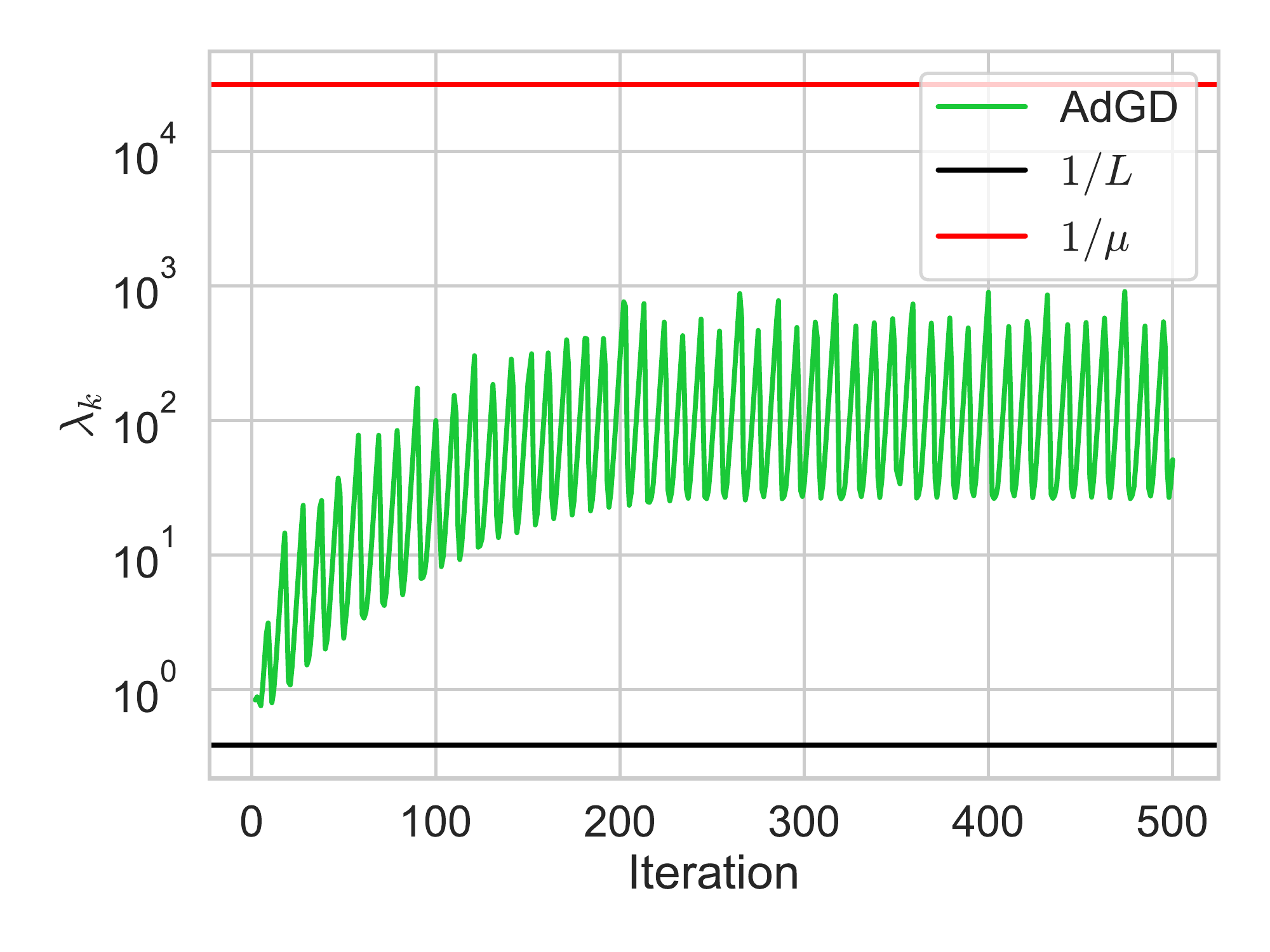}}
\caption{Mushrooms dataset, stepsize}
\end{subfigure}
\hfill
\begin{subfigure}[t]{0.32\textwidth}
{\includegraphics[trim={3mm 0 3mm 0},clip,width=1\textwidth]{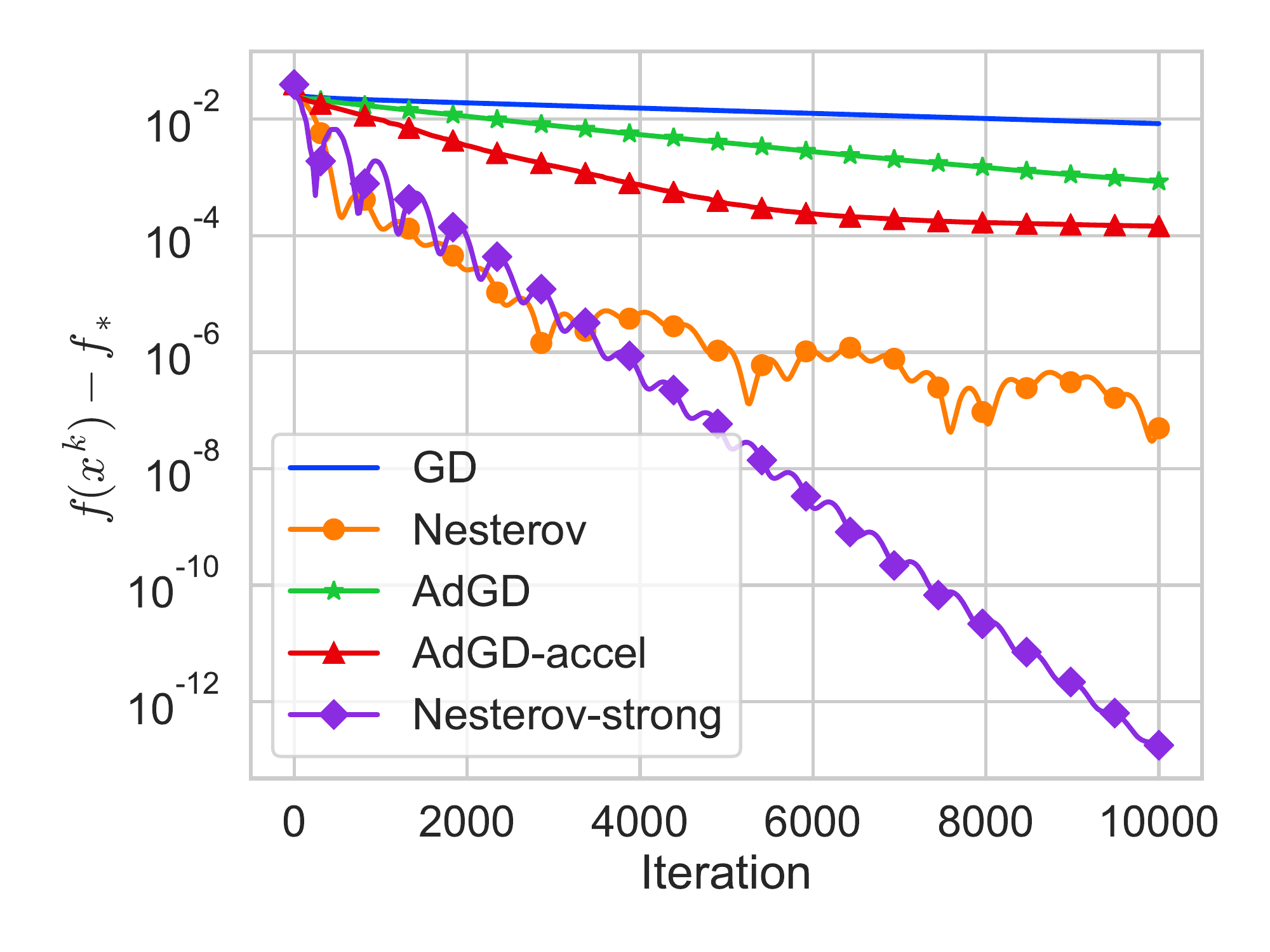}}
\caption{Covtype dataset, objective}
\end{subfigure}
\caption{Results for the logistic regression problem.}
\label{fig:logistic}
\end{figure*}

\begin{figure*}[t]
\centering
\begin{subfigure}[t]{0.32\textwidth}
    {\includegraphics[trim={3mm 0 3mm 0},clip,width=1\textwidth]{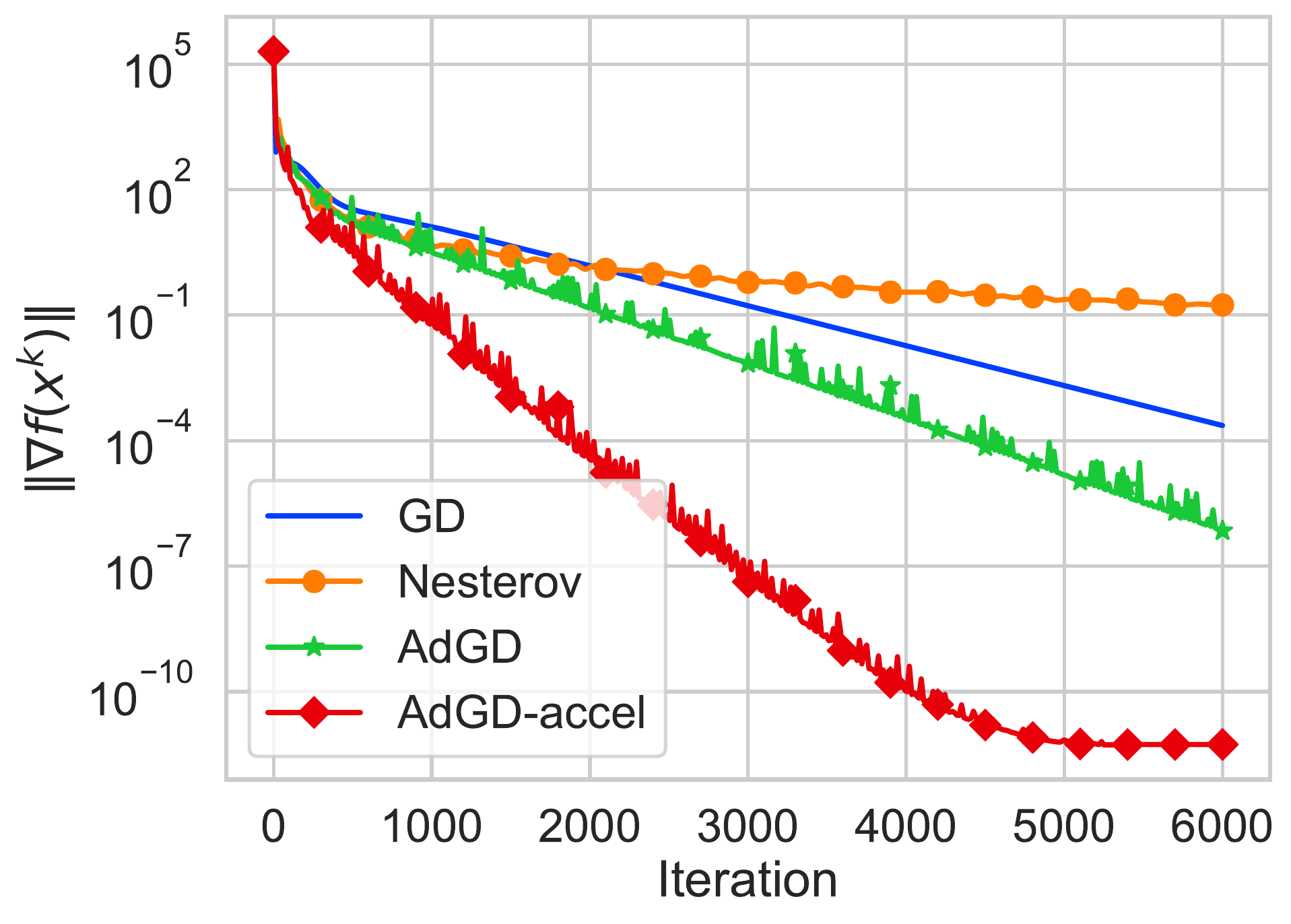}}
\caption{$r=10$}
\end{subfigure}
\hfill
\begin{subfigure}[t]{0.32\textwidth}
{\includegraphics[trim={3mm 0 3mm 0},clip,width=1\textwidth]{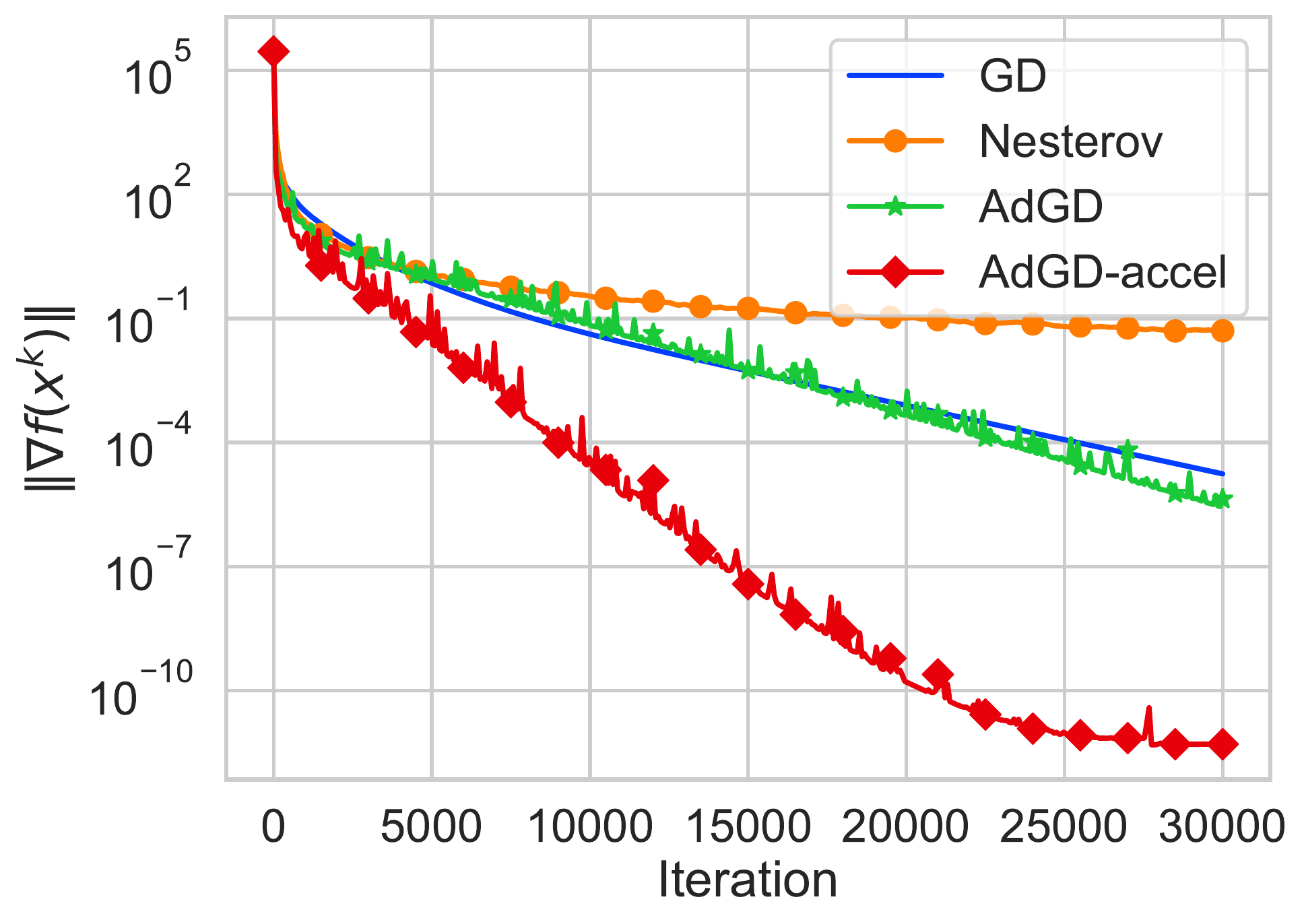}}
\caption{$r=20$}
\end{subfigure}
\hfill
\begin{subfigure}[t]{0.32\textwidth}
{\includegraphics[trim={3mm 0 3mm 0},clip,width=1\textwidth]{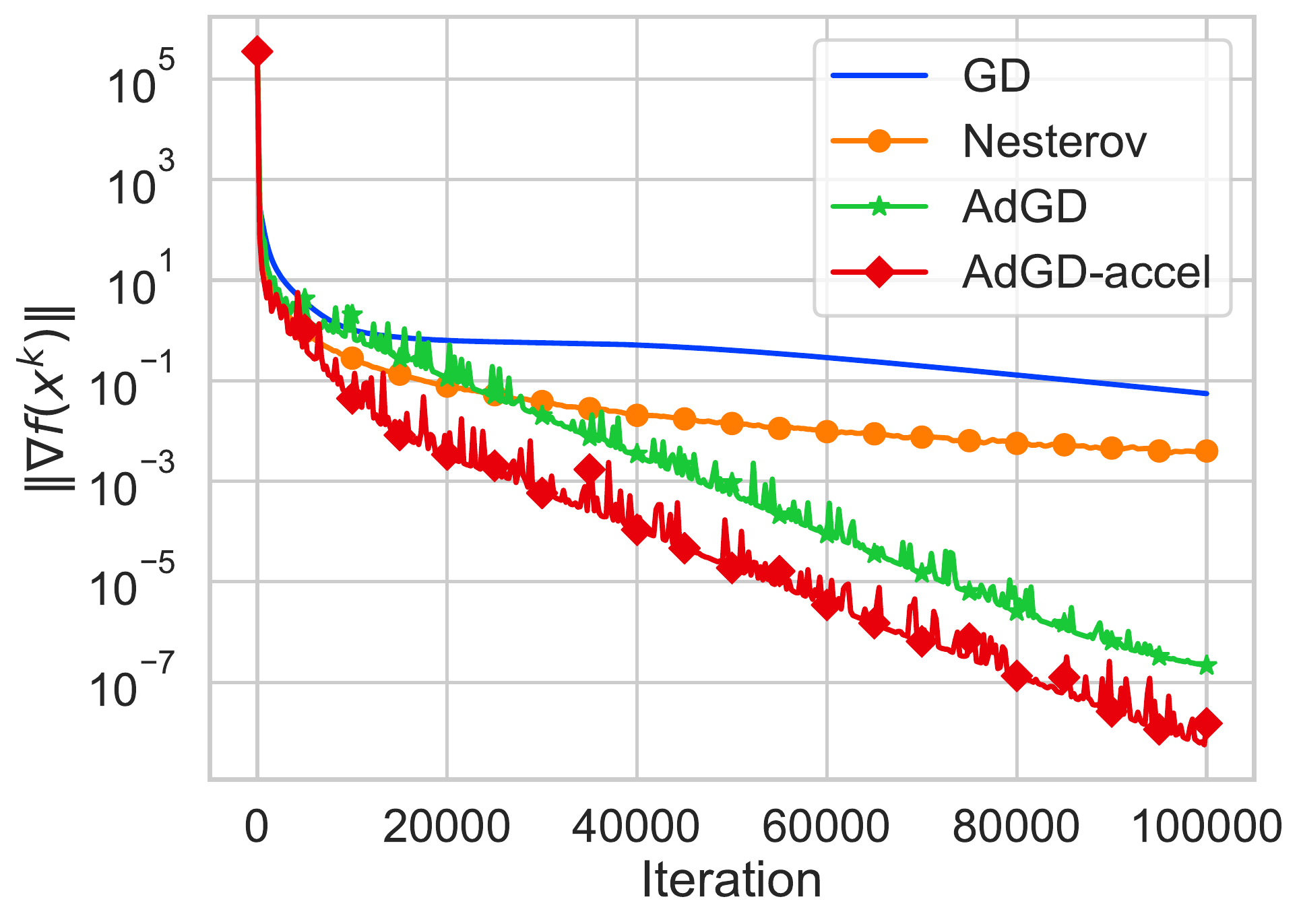}}
\caption{$r=30$}
\end{subfigure}
\caption{Results for matrix factorization. The objective is neither convex nor
    smooth.}
\label{fig:factorization}
\end{figure*}

\begin{figure*}[t]
\centering
\begin{subfigure}[t]{0.32\textwidth}
    {\includegraphics[trim={3mm 0 3mm 0},clip,width=1\textwidth]{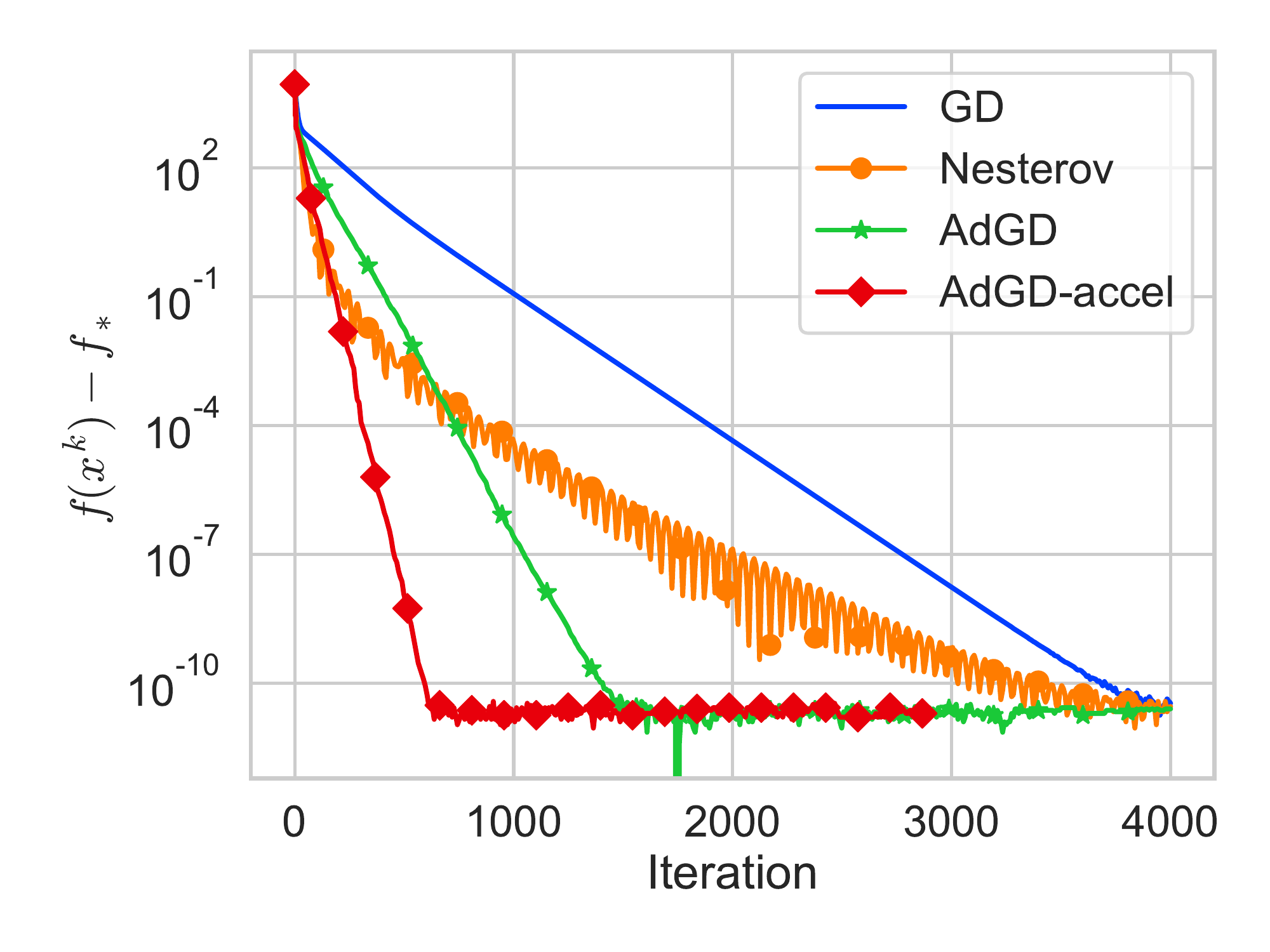}}
\caption{$M=10$}
\end{subfigure}
\hfill
\begin{subfigure}[t]{0.32\textwidth}
{\includegraphics[trim={3mm 0 3mm 0},clip,width=1\textwidth]{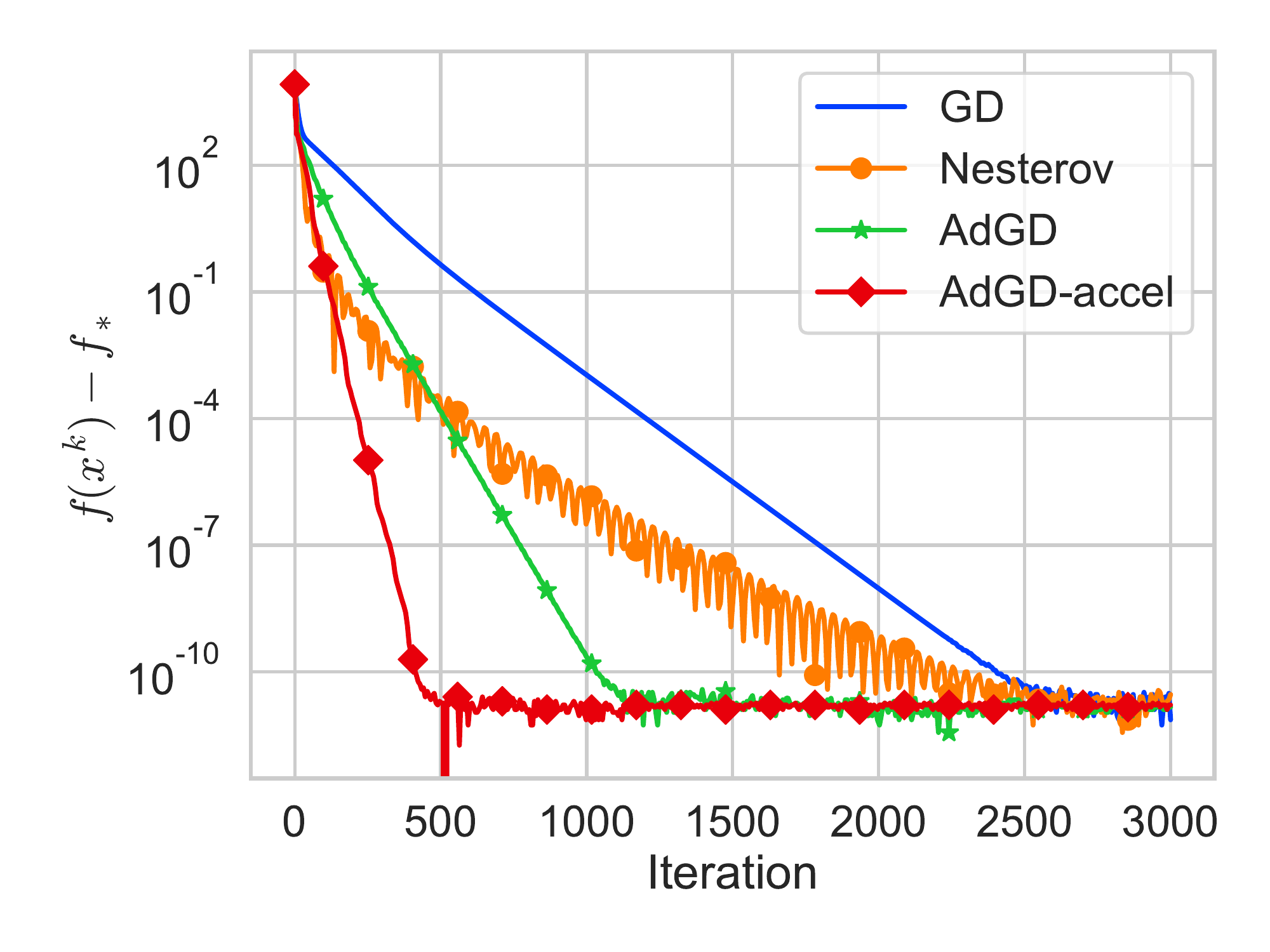}}
\caption{$M=20$}
\end{subfigure}
\hfill
\begin{subfigure}[t]{0.32\textwidth}
{\includegraphics[trim={3mm 0 3mm 0},clip,width=1\textwidth]{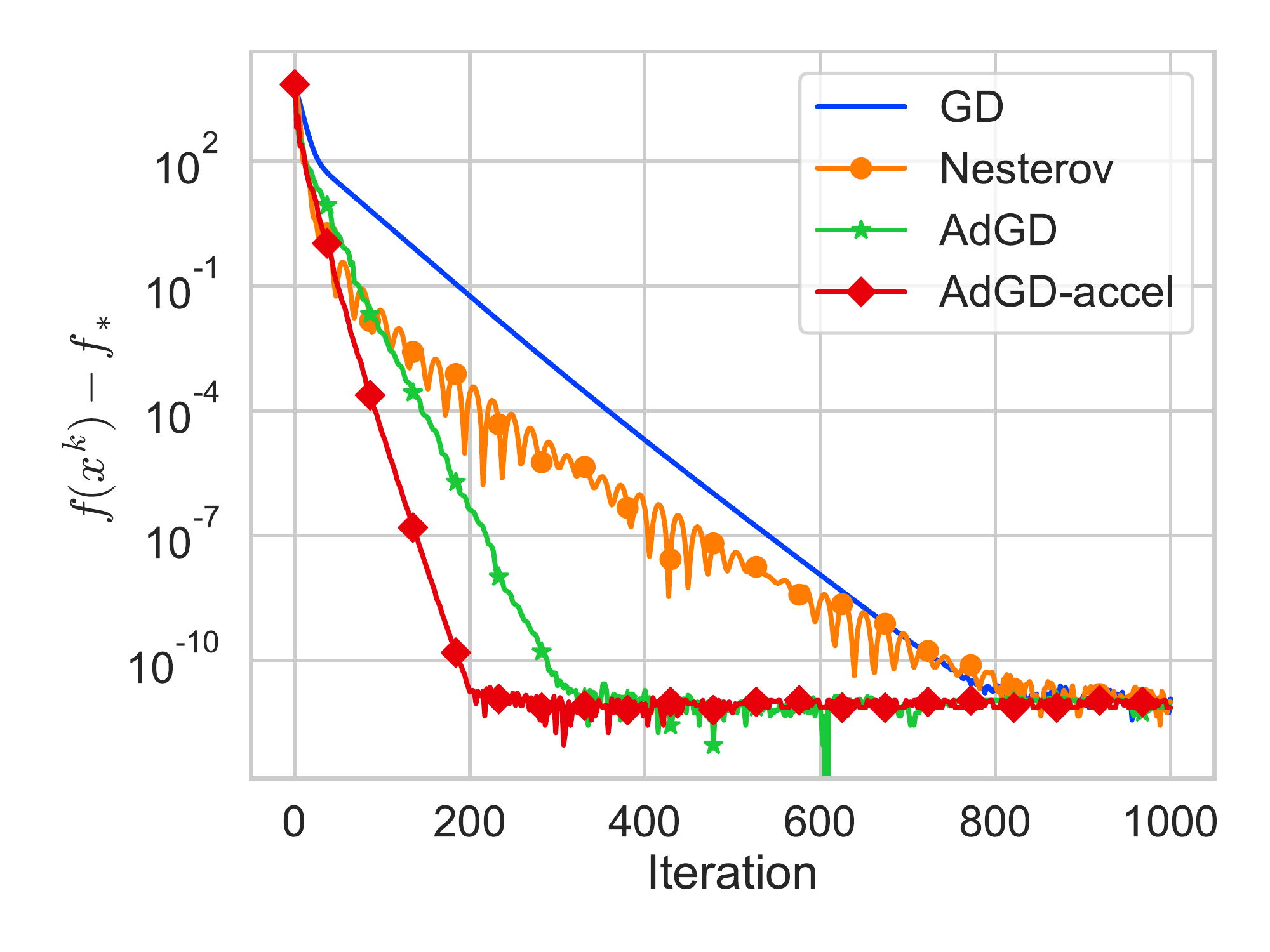}}
\caption{$M=100$}
\end{subfigure}
\caption{Results for the non-smooth subproblem from cubic regularization.}
\label{fig:cubic}
\end{figure*}

In cubic regularization of Newton method~\cite{nesterov2006cubic}, at
each iteration we need to  minimize $f(x)= g^\top x +
\frac{1}{2}x^\top H x + \frac{M}{6}\|x\|^3$, where $g\in \R^d, H\in
\R^{d\times d}$ and $M>0$ are given. This objective is smooth only
locally due to the cubic term, which is our motivation to consider
it. $g$ and $H$ were the gradient and the Hessian of the logistic loss
with the `covtype' dataset, evaluated at $x=0\in\R^d$. Although the values of
$M=10$, $20$, $100$ led to similar results, they also required different numbers of iterations, so we present
the corresponding results in \Cref{fig:cubic}.

\paragraph{Barzilai-Borwein, Polyak and line searches.}
\begin{figure*}[t]
\centering
\begin{subfigure}[t]{0.32\textwidth}
    {\includegraphics[trim={3mm 0 3mm 0},clip,width=1\textwidth]{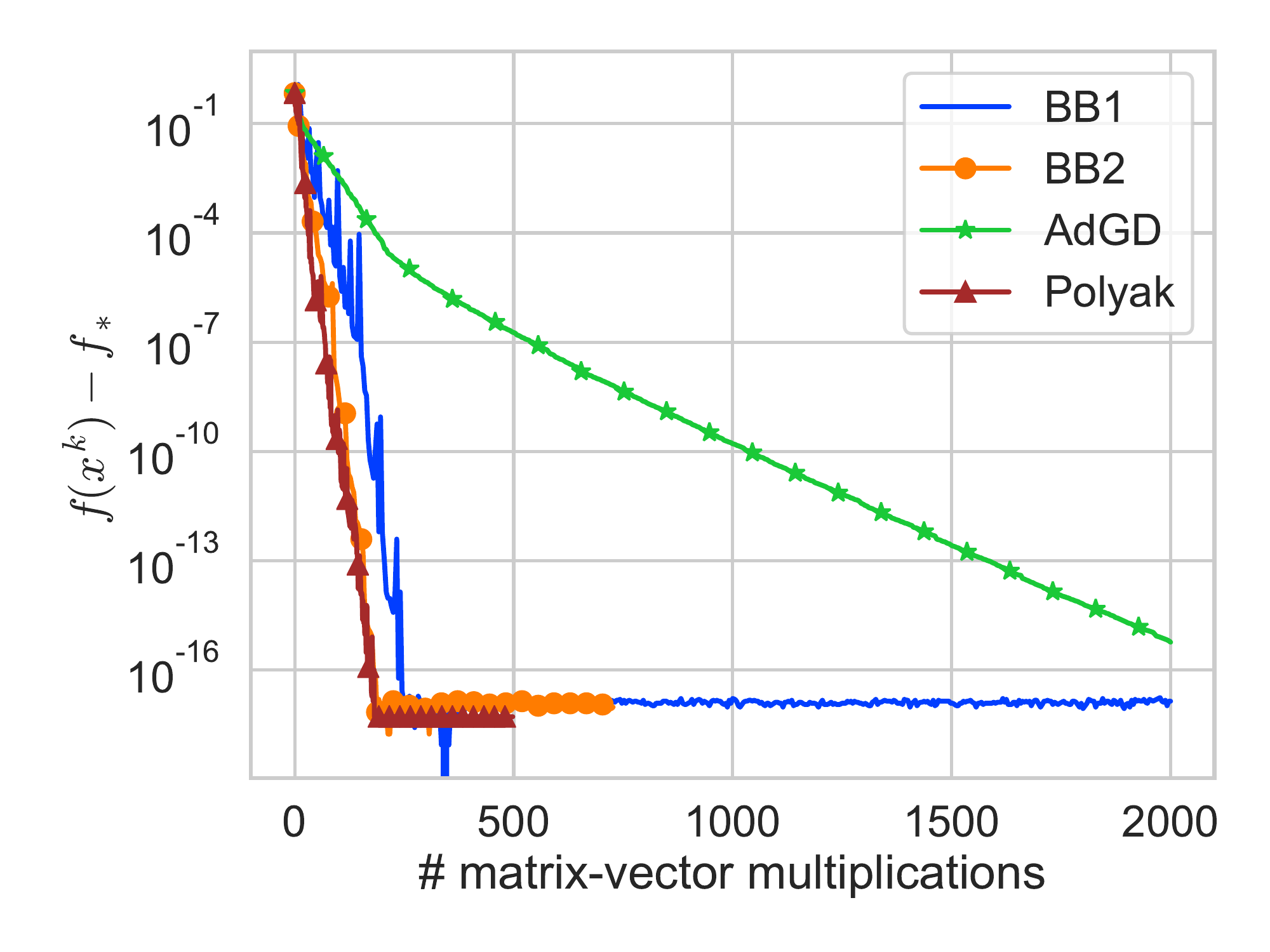}}
\caption{Mushrooms dataset, objective}
\end{subfigure}
\hfill
\begin{subfigure}[t]{0.32\textwidth}
{\includegraphics[trim={3mm 0 3mm 0},clip,width=1\textwidth]{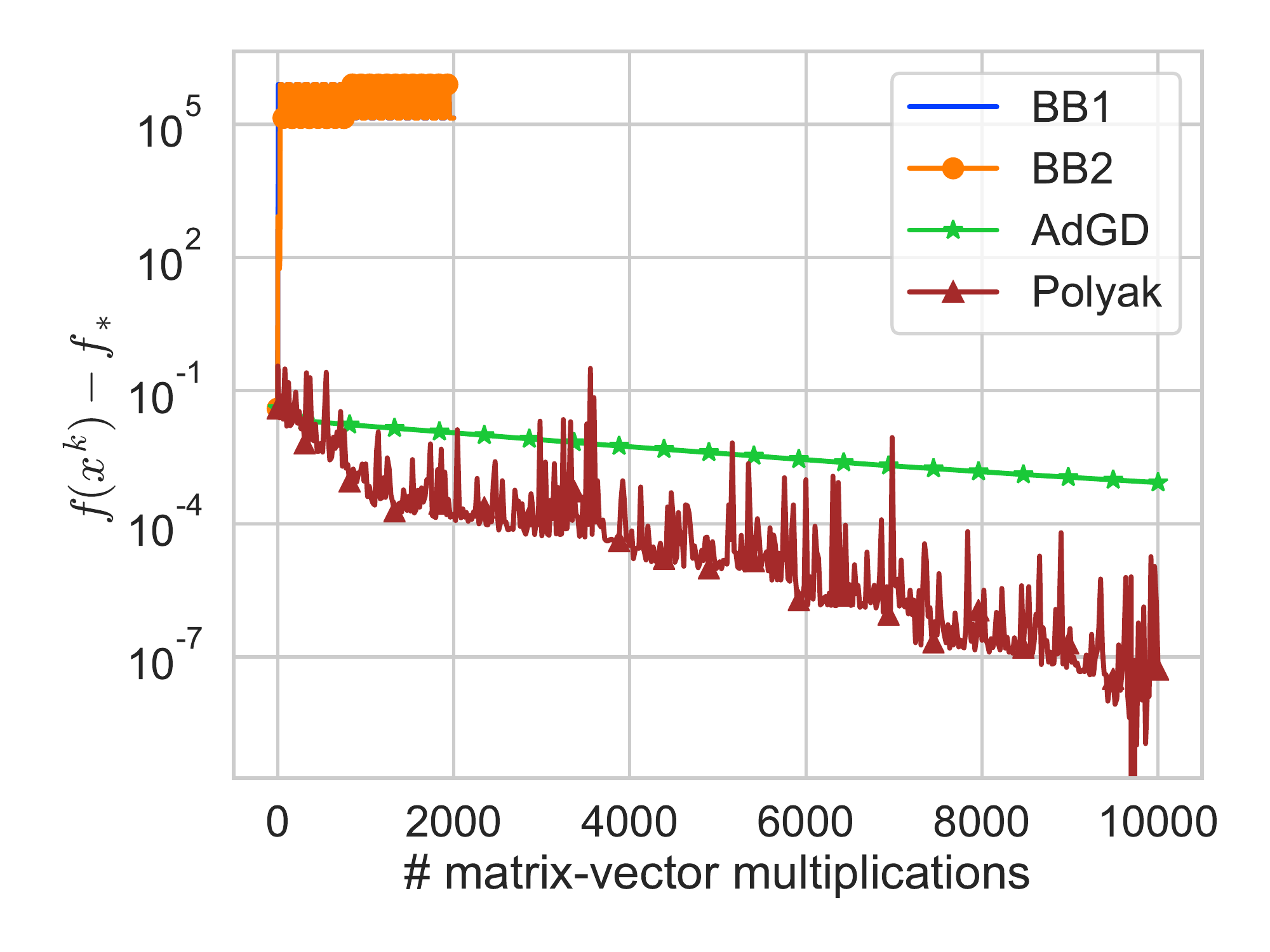}}
\caption{Covtype dataset, objective}
\end{subfigure}
\hfill
 \begin{subfigure}[t]{0.32\textwidth}
{\includegraphics[trim={3mm 0 3mm 0},clip,width=1\textwidth]{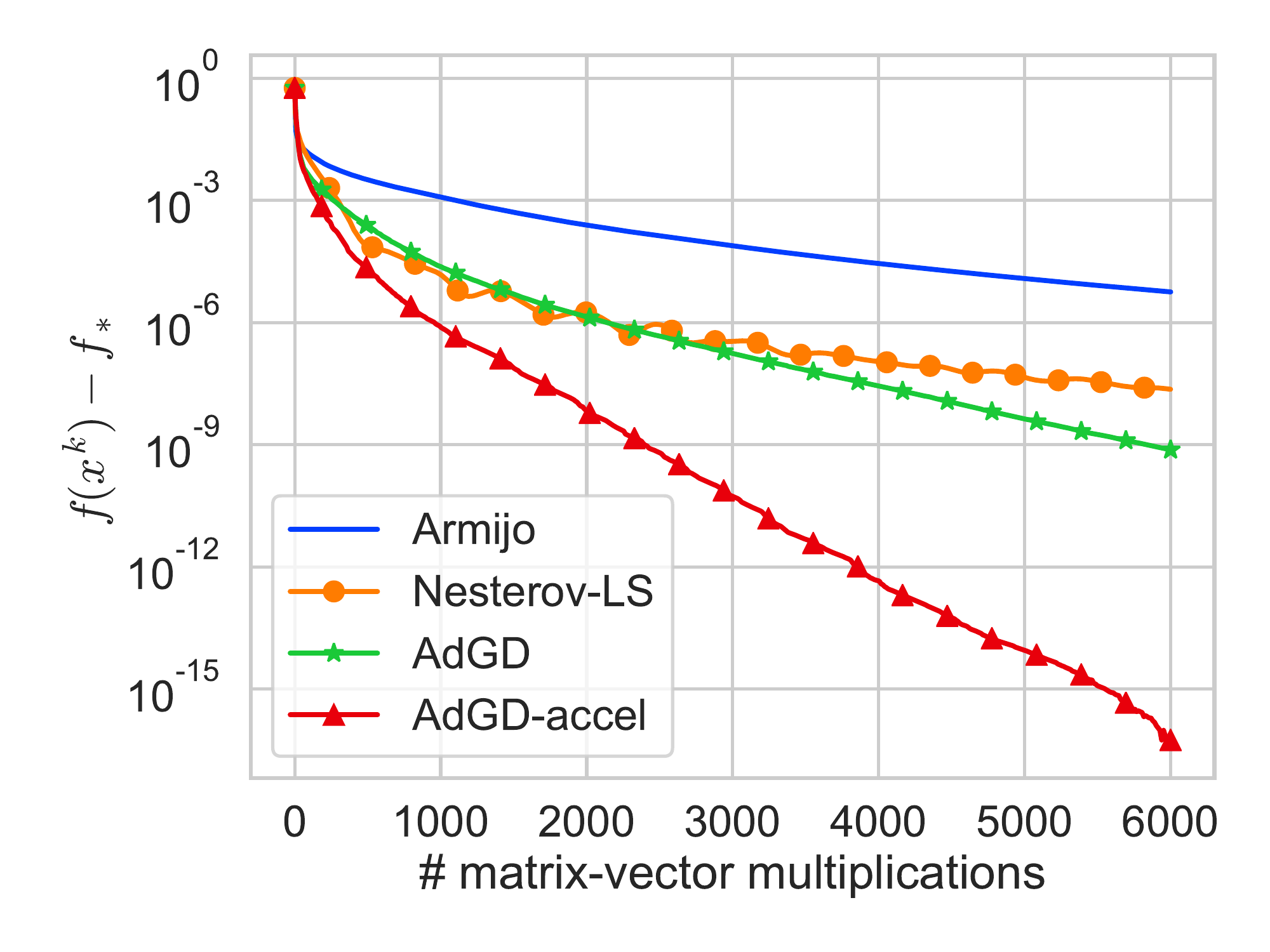}}
 \caption{W8a dataset, objective}
 \end{subfigure}
\caption{Additional results for the logistic regression problem.}
\label{fig:logistic_extra}
\end{figure*}

We have started this paper with an overview of different approaches to
tackle the issue of a stepsize for GD. Now, we
demonstrate some of those solutions. We again consider the
$\ell_2$-regularized logistic regression (same setting as before) with
`mushrooms', `covtype', and `w8a' datasets.

In~\Cref{fig:logistic_extra} (left) we see that the Barzilai-Borwein
method can indeed be very fast.  However, as we said before, it lacks
a theoretical basis and \Cref{fig:logistic_extra} (middle) illustrates
this quite well. Just changing one dataset to another makes both
versions of this method to diverge on a strongly convex and smooth
problem. Polyak's method consistently performs well (see
\Cref{fig:logistic_extra} (left and middle)), however, only after it
was fed with $f_*$ that we found by running another method. Unfortunately,
for logistic regression there is no way to guess this value
beforehand.

Finally, line search for GD (Armijo version) and Nesterov GD
(implemented as in \cite{Nesterov2013a}) eliminates the need to know
the stepsize, but this comes with a higher price per iteration as
\Cref{fig:logistic_extra} (right) shows. Actually in all our
experiments for logistic regression with different datasets one
iteration of Armijo line search was approximately 2 times more
expensive than AdGD, while line search for Nesterov GD was 4 times
more expensive. We note that these observations are consistent with the
theoretical derivations in \cite{Nesterov2013a}.

\paragraph{Neural networks.}
\begin{figure*}[t]
\centering
\begin{subfigure}[t]{0.32\textwidth}
    {\includegraphics[trim={3mm 0 3mm 0},clip,width=1\textwidth]{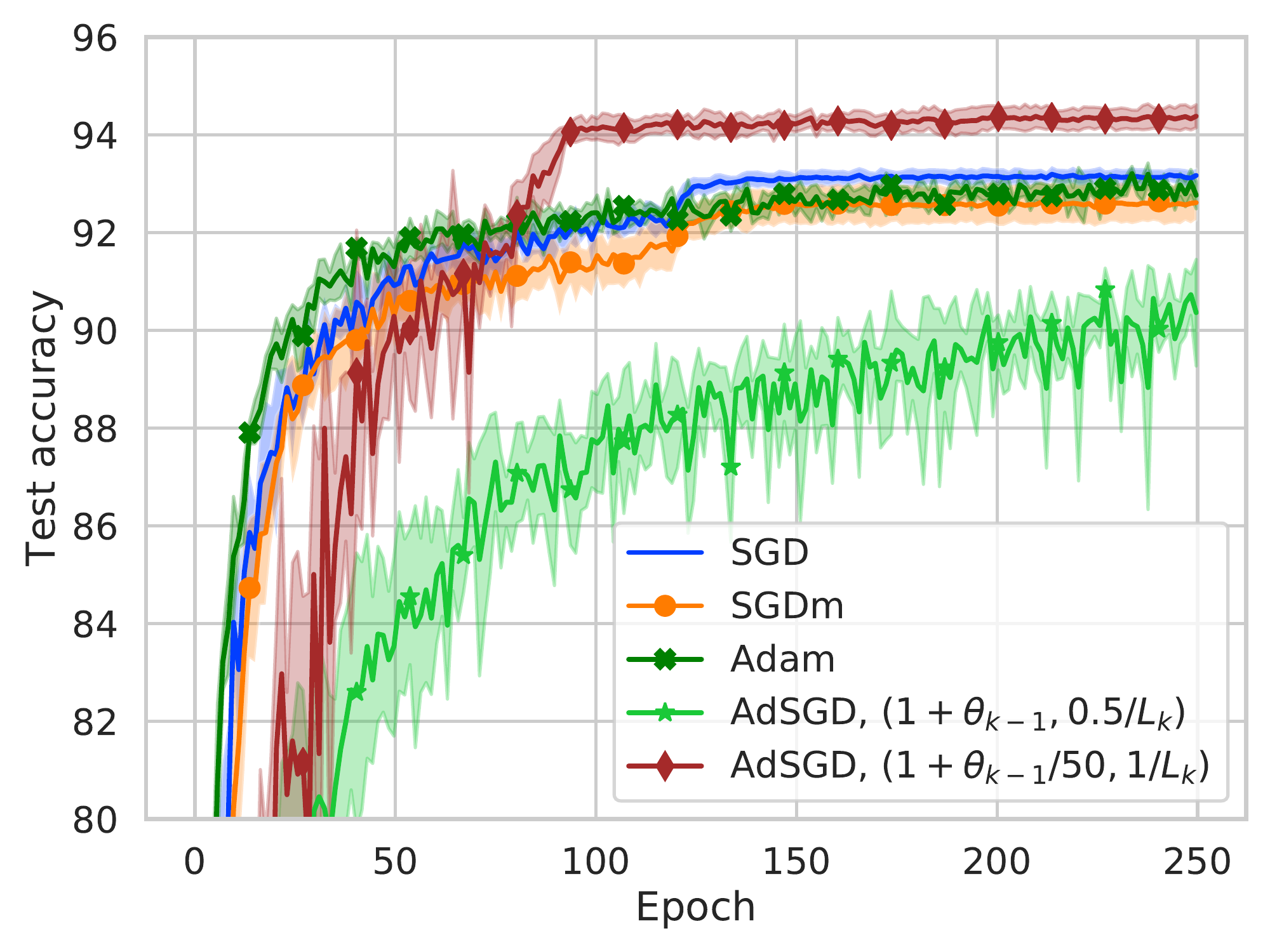}}
\caption{Test accuracy}
\end{subfigure}
\hfill
\begin{subfigure}[t]{0.32\textwidth}
{\includegraphics[trim={3mm 0 3mm 0},clip,width=1\textwidth]{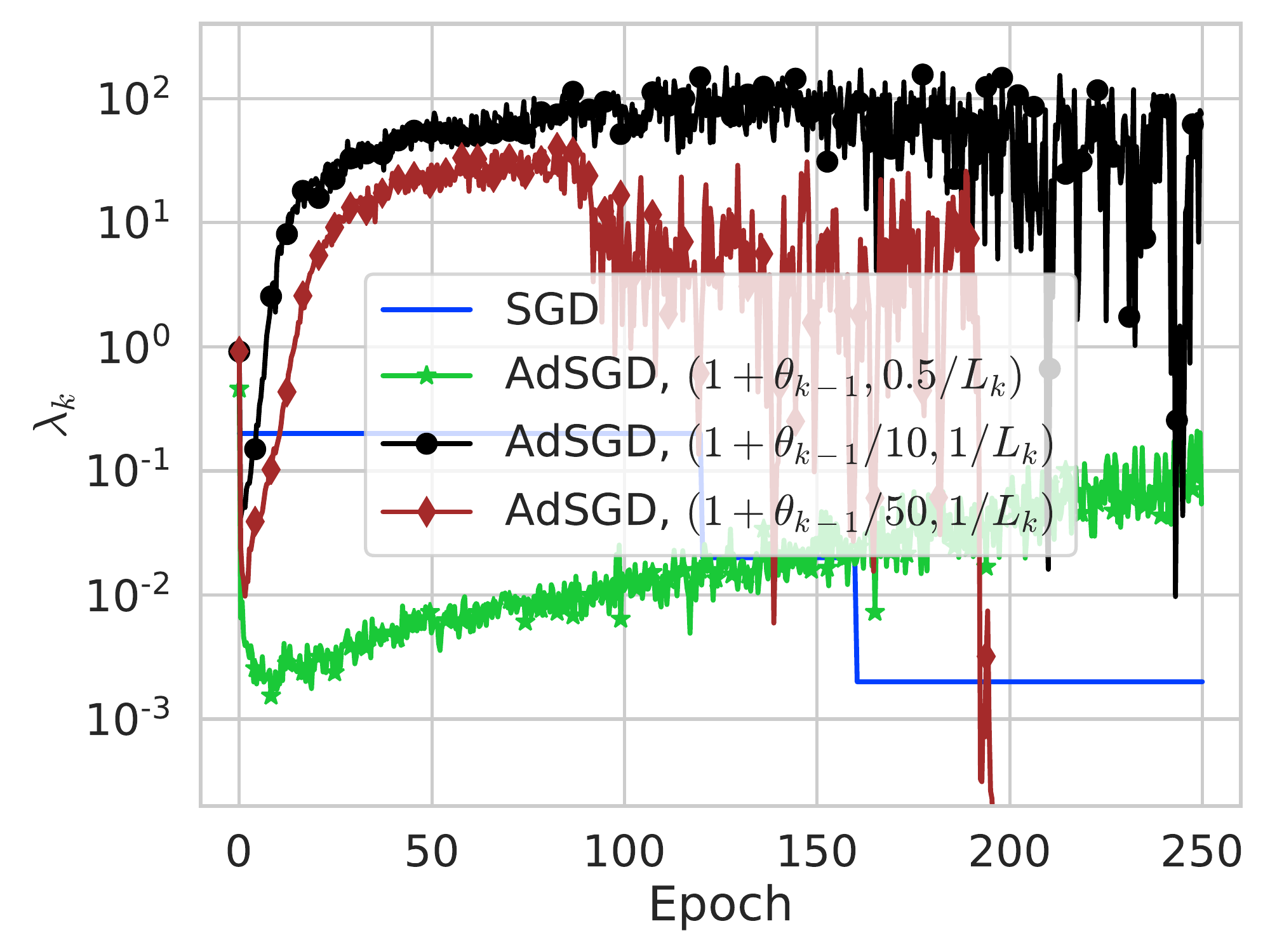}}
\caption{Stepsize}
\end{subfigure}
\hfill
\begin{subfigure}[t]{0.32\textwidth}
{\includegraphics[trim={3mm 0 3mm 0},clip,width=1\textwidth]{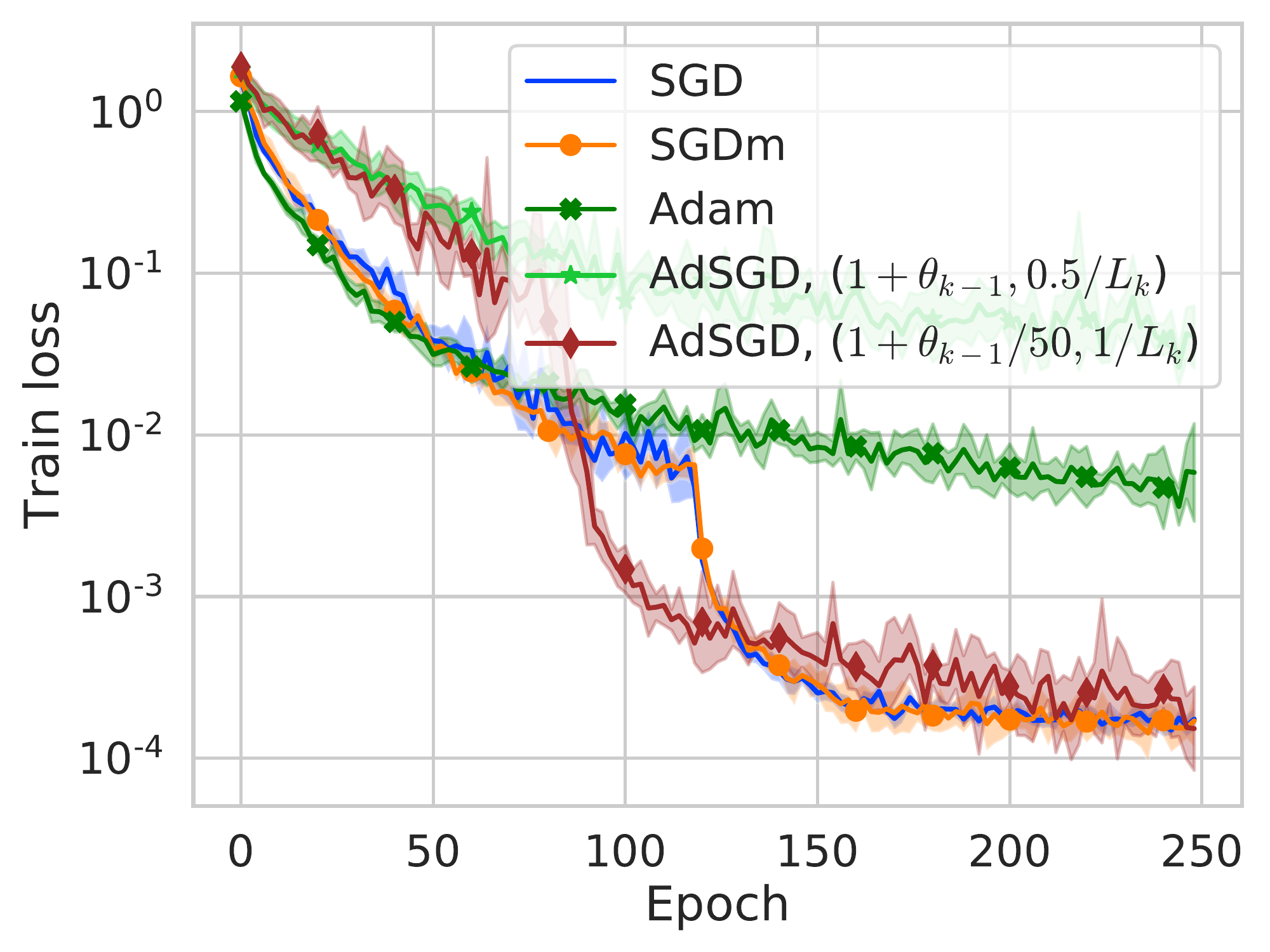}}
\caption{Train loss}
\end{subfigure}
\caption{Results for training ResNet-18 on Cifar10. Labels for AdGD correspond to how $\la_k$ was estimated.}
\label{fig:resnet}
\end{figure*}

\begin{figure*}[t]
\centering
\begin{subfigure}[t]{0.32\textwidth}
    {\includegraphics[trim={3mm 0 3mm 0},clip,width=1\textwidth]{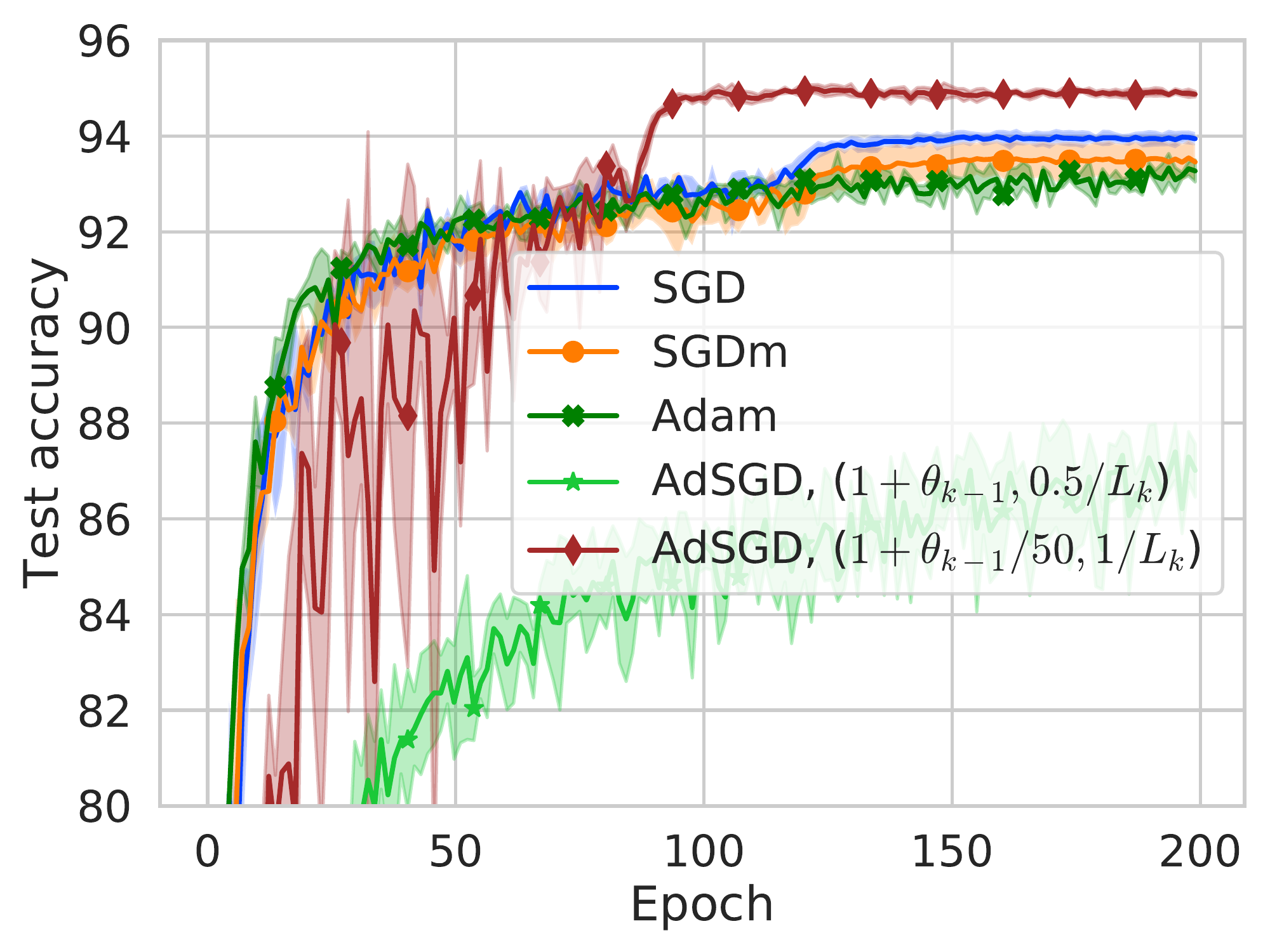}}
\caption{Test accuracy}
\end{subfigure}
\hfill
\begin{subfigure}[t]{0.32\textwidth}
{\includegraphics[trim={3mm 0 3mm 0},clip,width=1\textwidth]{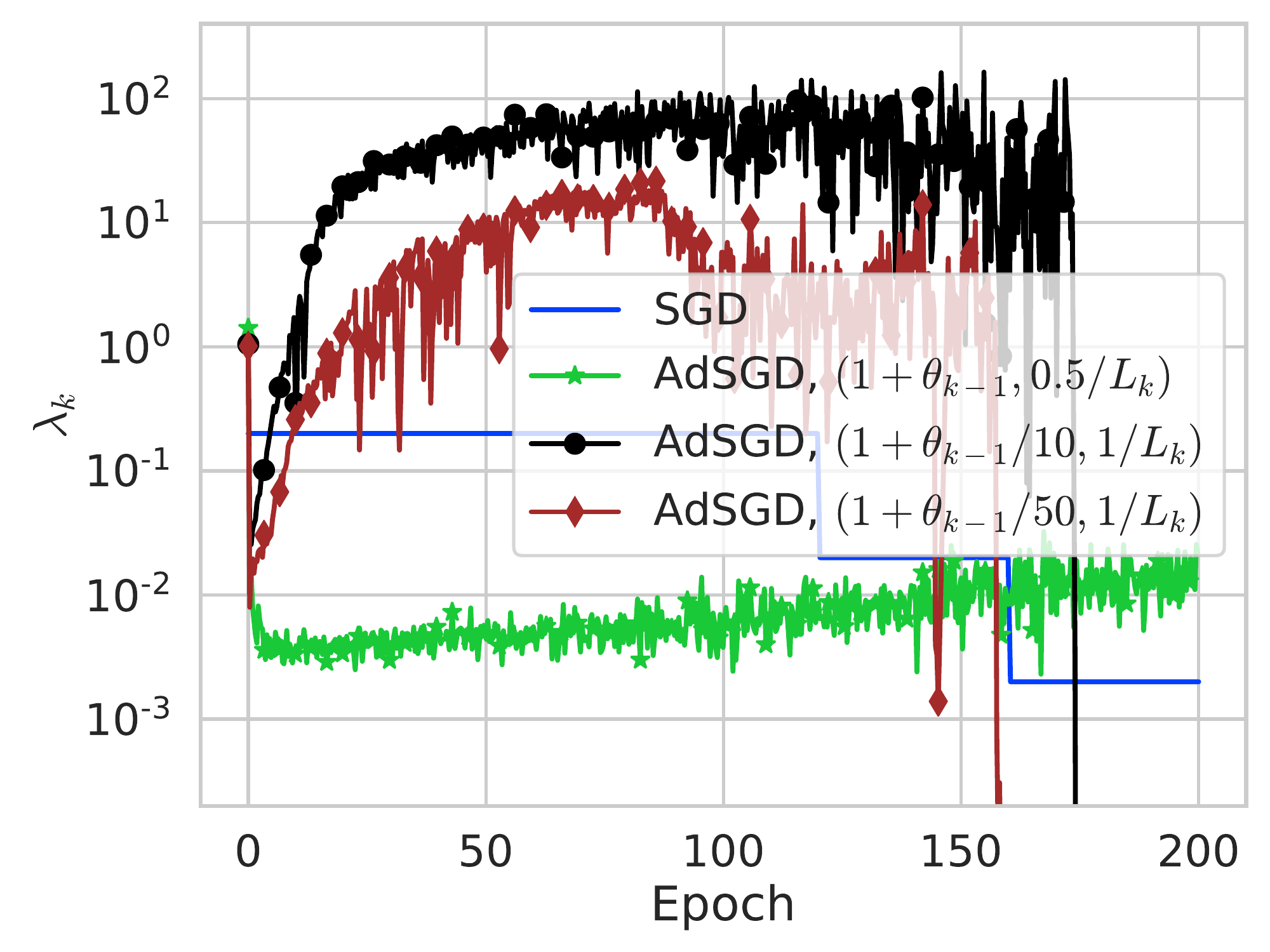}}
\caption{Stepsize}
\end{subfigure}
\hfill
\begin{subfigure}[t]{0.32\textwidth}
{\includegraphics[trim={3mm 0 3mm 0},clip,width=1\textwidth]{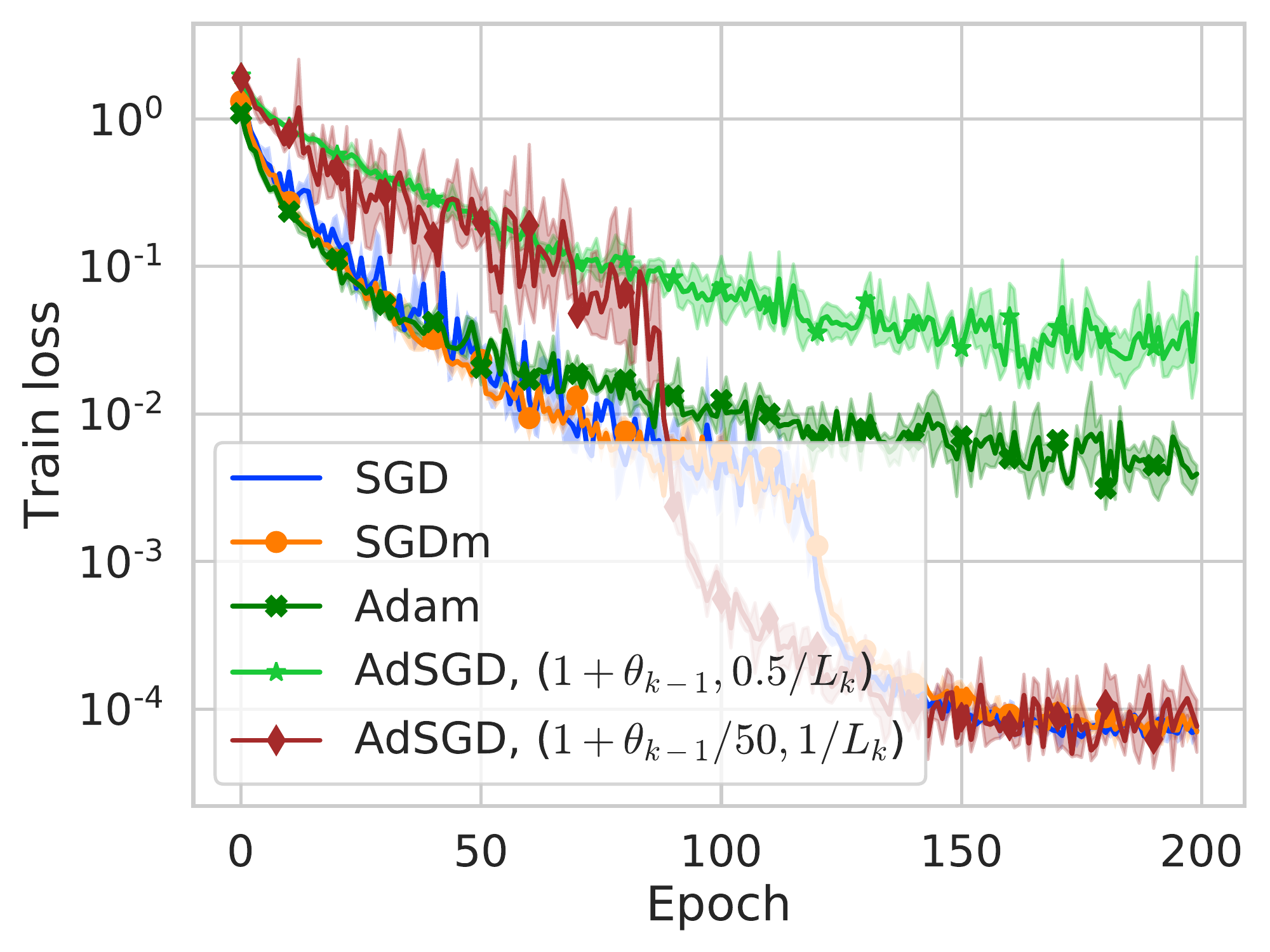}}
\caption{Train loss}
\end{subfigure}
\caption{Results for training DenseNet-121 on Cifar10.}
\label{fig:densenet}
\end{figure*}

We use standard ResNet-18 and DenseNet-121 architectures implemented in PyTorch~\cite{paszke2017automatic} and train them to classify images from the Cifar10 dataset~\cite{krizhevsky2009learning} with cross-entropy loss.

We use batch size 128 for all methods. For our method, we observed
that $\frac{1}{L_k}$ works better than $\frac{1}{2L_k}$. We ran it
with $\sqrt{1+\gamma\th_k}$ in the other factor with values of
$\gamma$ from $\{1, 0.1, 0.05, 0.02, \\ 0.01\}$ and $\gamma=0.02$ performed the best. For reference, we provide the result for the theoretical estimate as well and value $\gamma=0.1$ in the plot with estimated stepsizes.  The results are depicted in~\Cref{fig:resnet,fig:densenet} and other details are provided in~\cref{ap:exp_details}.

We can see that, surprisingly, our method achieves better test accuracy than SGD despite having the same train loss. At the same time, our method is significantly slower at the early stage and the results are quite noisy for the first 75 epochs. Another observation is that the smoothness estimates are very non-uniform and $\lambda_k$ plummets once train loss becomes small.

\section{Perspectives}
We briefly provide a few directions which we personally consider to be
important and challenging.
\begin{enumerate}
    \item \textbf{Nonconvex case.}  A great challenge for us is to
    obtain theoretical guarantees of the proposed method in the
    nonconvex settings. We are not aware of any generic first-order
    method for nonconvex optimization that does not rely on the
    descent lemma (or its
    generalization), see, e.g., \cite{attouch2013convergence}.

    \item \textbf{Performance estimation.} In our experiments we often
    observed much better performance of Algorithm~\ref{alg:main}, than
    GD or AGD. However, the theoretical rate we can show coincides
    with that of GD. The challenge here is to bridge this gap and we hope
    that the approach pioneered by~\cite{drori2014performance} and
    further developed
    in~\cite{taylor2017smooth,kim2016optimized,taylor19a} has a
    potential to do that.

    \item \textbf{Composite minimization.} In classical first-order
    methods, the transition from smooth to composite
    minimization~\cite{Nesterov2013a} is rather
    straightforward. Unfortunately, the proposed proof of
    \Cref{alg:main} does not seem to provide any route for
    generalization and we hope there is some way of resolving this
    issue.

    \item \textbf{Stochastic optimization.} The derived bounds for the
    stochastic case are not satisfactory and have a suboptimal
    dependency on $\kappa$. However, it is not clear to us whether one
    can extend the techniques from the deterministic analysis to
    improve the rate.
    \item \textbf{Heuristics.} Finally, we want to have some solid
    ground in understanding the performance of the proposed heuristics.
\end{enumerate}

\paragraph{Acknowledgment.} Yura Malitsky wishes to thank Roman
Cheplyaka for his interest in optimization that partly inspired the
current work. Yura Malitsky was supported by the ONRG project
N62909-17-1-2111 and HASLER project N16066.

\newpage
\printbibliography
\clearpage
\part*{Appendix: }
\section{Missing proofs}
Recall that in the proof of \Cref{th:main} we only showed boundedness of the iterates and complexity for minimizing $f(x)$. It remains to show that sequence $(x^k)$ converges to a solution.
For this, we  need some variation of the Opial lemma.
\begin{lemma}\label{opial-like}
    Let  $(x^k)$ and $(a_k)$ be two sequences in
    $\R^d$ and $\R_+$ respectively. Suppose that $(x^k)$ is bounded,
    its cluster points belong to $\cX \subset \R^d$ and it
    also holds that
\begin{equation}\label{fejer}
    \n{x^{k+1}-x}^2 + a_{k+1}\leq \n{x^k-x}^2 + a_k, \qquad \forall
    x\in \cX.
\end{equation}
Then $(x^k)$ converges to some element in $\cX$.
\end{lemma}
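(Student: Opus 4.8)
The plan is to treat the hypothesis~\eqref{fejer} as a \emph{quasi-Fej\'er monotonicity} condition and run the classical Opial argument, with the energy terms $a_k$ absorbed into the monotone quantity. First I would fix an arbitrary $x\in\cX$ and set $b_k(x)\eqdef \n{x^k-x}^2+a_k\ge 0$. By~\eqref{fejer} the sequence $(b_k(x))_k$ is nonincreasing and bounded below by $0$, hence it converges to some limit $\ell(x)\ge 0$. Note that I do \emph{not} claim that $(a_k)$ itself converges; only the combined quantity $b_k(x)$ is controlled, and this turns out to be all the argument needs. Since $(x^k)$ is bounded it has at least one cluster point, and by assumption every cluster point lies in $\cX$, so $\cX$ is nonempty and the above applies to all such limit points.

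The heart of the proof is to show that $(x^k)$ has exactly one cluster point. Suppose $x^*$ and $y^*$ are two cluster points; both belong to $\cX$, so $(b_k(x^*))$ and $(b_k(y^*))$ both converge. Consequently their difference converges, and since the $a_k$ terms cancel,
\[
\n{x^k-x^*}^2-\n{x^k-y^*}^2 = b_k(x^*)-b_k(y^*)\ \longrightarrow\ \ell(x^*)-\ell(y^*).
\]
Expanding the left-hand side via the identity $\n{u}^2-\n{v}^2=\lr{u-v,u+v}$ with $u=x^k-x^*$ and $v=x^k-y^*$ gives $2\lr{y^*-x^*,x^k}-\lr{y^*-x^*,x^*+y^*}$, so the scalar sequence $\lr{y^*-x^*,x^k}$ converges to some constant. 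Passing to a subsequence $x^{k_j}\to x^*$ shows this constant equals $\lr{y^*-x^*,x^*}$, while a subsequence $x^{k_m}\to y^*$ shows it equals $\lr{y^*-x^*,y^*}$; subtracting the two yields $\n{y^*-x^*}^2=0$, i.e.\ $x^*=y^*$.

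Finally I would invoke the standard fact that a bounded sequence with a unique cluster point converges to it: otherwise some subsequence would stay at distance $\ge\e$ from the cluster point, and being bounded it would produce a second, distinct cluster point, a contradiction. Hence $(x^k)$ converges, and its limit, being a cluster point, lies in $\cX$, as claimed. The hard part will be the bookkeeping in the uniqueness step rather than any deep estimate: the key realization is that one must work with the combined energy $b_k(x)$ rather than $\n{x^k-x}^2$ alone (which need not be monotone here), and that the $a_k$ contributions have to be eliminated by differencing the two limit points \emph{before} extracting the inner-product information.
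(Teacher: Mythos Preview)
Your proof is correct and follows essentially the same Opial-type argument as the paper: both establish that $b_k(x)=\n{x^k-x}^2+a_k$ converges for every $x\in\cX$ and then compare two cluster points to force $\n{x^*-y^*}=0$. The only cosmetic difference is that the paper extracts the subsequential limits of $a_k$ directly (using $x=\bar x_1$ and then $x=\bar x_2$) whereas you difference $b_k(x^*)-b_k(y^*)$ to cancel $a_k$ and work with the inner product $\lr{y^*-x^*,x^k}$; both are standard presentations of the same idea.
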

\begin{proof}
Let $\x_1$, $\x_2$ be any cluster points of $(x^k)$. Thus, there
    exist two subsequences $(x^{k_i})$ and $(x^{k_j})$ such that
    $x^{k_i}\to \x_1$ and $x^{k_j}\to \x_2$.
    Since $\n{x^k-x}^2 + a_k$ is nonnegative and bounded,  $\lim_{k\to \infty}(\n{x^k-x}^2 + a_k)$ exists for any
    $x\in \cX$. Let $x=\x_1$. This yields
\begin{align*}
    \lim_{k\to \infty}(\n{x^k-\x_1}^2 + a_k)&=\lim_{i\to
            \infty}(\n{x^{k_i}-\x_1}^2 + a_{k_i})=\lim_{i\to
                                            \infty}a_{k_i}\\
  & = \lim_{j\to
            \infty}(\n{x^{k_j}-\x_1}^2 + a_{k_j})=\n{\x_2-\x_1}^2 + \lim_{j\to
                                            \infty}a_{k_j}.
\end{align*}
Hence, $\lim_{i\to \infty} a_{k_i} = \lim_{j\to \infty}a_{k_j} +
\n{\x_1-\x_2}^2$. Doing the same with $x=\x_2$ instead of $x=\x_1$,
yields $\lim_{j\to \infty} a_{k_j} = \lim_{i\to \infty}a_{k_i} +
\n{\x_1-\x_2}^2$. Thus, we obtain that $\x_1=\x_2$, which
finishes the proof.
\end{proof}
Another statement that we need here is the following tightening of the convexity property.
\begin{lemma}[Theorem 2.1.5, \cite{Nesterov2013}]
    \label{lemma:coco}
    Let $\mathcal{C}$ be a closed convex set in  $\R^d$. If $f\colon \mathcal{C}\to \R $
    is convex and $L$-smooth, then $\forall x,y\in \mathcal{C}$ it holds
    \begin{align}\label{eq:smooth_and_convex}
    f(x)-f(y)-\lr{\nabla f(y),x-y}\geq \frac{1}{2L}\n{\nabla
            f(x)-\nabla f(y)}^2.
    \end{align}
\end{lemma}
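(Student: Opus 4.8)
The plan is to reduce the statement to the descent lemma applied to a cleverly shifted version of $f$. For a fixed $y$, I would introduce the auxiliary function $\phi(z)\eqdef f(z)-\lr{\nabla f(y),z}$. As the difference of the convex function $f$ and a linear term, $\phi$ is convex on $\mathcal{C}$, and since $\nabla\phi(z)=\nabla f(z)-\nabla f(y)$ it inherits exactly the same smoothness, i.e.\ $\nabla\phi$ is $L$-Lipschitz on $\mathcal{C}$. The whole point of this shift is that $\nabla\phi(y)=\nabla f(y)-\nabla f(y)=0$, so $y$ is a stationary point of a convex function and therefore a global minimizer of $\phi$ over $\mathcal{C}$.

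Next I would quantify how far below $\phi(x)$ the value $\phi(y)$ sits. Invoking the descent lemma (the standard consequence of $L$-smoothness, $\phi(w)\le\phi(x)+\lr{\nabla\phi(x),w-x}+\frac{L}{2}\n{w-x}^2$) and evaluating the right-hand side at the gradient-step point $w=x-\frac1L\nabla\phi(x)$ gives
\[
\phi\Bigl(x-\tfrac1L\nabla\phi(x)\Bigr)\le\phi(x)-\frac{1}{2L}\n{\nabla\phi(x)}^2 .
\]
Because $y$ minimizes $\phi$, the left-hand side is no smaller than $\phi(y)$, whence $\phi(y)\le\phi(x)-\frac{1}{2L}\n{\nabla\phi(x)}^2$. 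It then remains only to unfold the definition of $\phi$: substituting $\phi(y)=f(y)-\lr{\nabla f(y),y}$, $\phi(x)=f(x)-\lr{\nabla f(y),x}$ and $\nabla\phi(x)=\nabla f(x)-\nabla f(y)$ and rearranging produces exactly \eqref{eq:smooth_and_convex}.

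The step I expect to require the most care is the middle one, when $\mathcal{C}$ is not all of $\R^d$. The descent lemma needs $L$-smoothness along the entire segment joining $x$ to the auxiliary point $x-\frac1L\nabla\phi(x)$, yet this point is only guaranteed to lie within distance $\n{x-y}$ of $x$ (using $\n{\nabla\phi(x)}=\n{\nabla f(x)-\nabla f(y)}\le L\n{x-y}$), and need not sit on the segment $[x,y]\subseteq\mathcal{C}$. In the one-dimensional reduction along $[x,y]$ monotonicity of $\phi'$ forces the step to land inside $[x,y]$, so the bound is automatic there; in general I would either appeal to Nesterov's original statement for $f\in\mathcal{F}^{1,1}_L(\R^d)$ or smoothly extend $f$ past $\mathcal{C}$. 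Since in the present paper $f$ is globally convex and it is precisely the local constant $L$ on $\mathcal{C}$ that the analysis relies upon, this technicality is harmless for the intended applications.
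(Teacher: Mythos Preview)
The paper does not supply its own proof of this lemma; it simply cites Theorem~2.1.5 of Nesterov's book. Your argument via the shifted function $\phi(z)=f(z)-\lr{\nabla f(y),z}$, the descent lemma at the gradient step $x-\tfrac1L\nabla\phi(x)$, and the observation that $y$ minimizes $\phi$, is precisely Nesterov's proof, so there is nothing to contrast.

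Your caveat about the constraint set $\mathcal{C}$ is well taken: the auxiliary point $x-\tfrac1L\nabla\phi(x)$ need not lie in $\mathcal{C}$, so the descent lemma with the \emph{local} constant $L$ is not immediately justified there. In the paper's setting this is indeed harmless, because $f$ is assumed convex and differentiable on all of $\R^d$; one may therefore run the argument on $\R^d$ with any global (or slightly enlarged local) smoothness constant and then specialize to $x,y\in\mathcal{C}$. Your diagnosis and resolution are both correct.
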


\begin{proof}[\textbf{Proof of
    Theorem~\ref{th:main}}](\textit{Convergence of $(x^k)$})

Note that in the first part we have already proved that $(x^k)$ is bounded and that
$\nabla f$ is $L$-Lipschitz on  $\mathcal{C}=\clconv\{x^*, x^0, x^1, \dots\}$. Invoking Lemma~\ref{lemma:coco}, we deduce that
\begin{equation}
    \label{eq:better_with_Lip}
    \la_k(f(x^*) - f(x^k)) \geq  \la_k \lr{\nabla f(x^k), x^*-x^{k}} +
    \frac{\la_k}{2L}\n{\nabla f(x^k)}^2.
\end{equation}
This indicates that instead of using
inequality~\eqref{eq:conv} in the proof of
Lemma~\ref{lemma:energy}, we could use a better
estimate~\eqref{eq:better_with_Lip}. However, we want to emphasize
that we did not assume that $\nabla f$ is globally Lipschitz, but rather
obtained Lipschitzness on $\mathcal{C}$ as an artifact of our
analysis. Clearly, in the end this improvement gives us an additional term
$\frac{\la_k}{L}\n{\nabla f(x^k)}^2$ in the left-hand side of
\eqref{eq:lemma_ineq}, that is
\begin{multline}
  \label{eq:lemma_ineq_appendix}
\n{x^{k+1}-x^*}^2+ \frac 1 2 \n{x^{k+1}-x^k}^2  + 2\la_{k}(1+\th_{k})
 (f(x^k)-f_*) + \frac{\la_k}{L}\n{\nabla f(x^k)}^2 \\ \leq \n{x^k-x^*}^2  + \frac 1 2
 \n{x^k-x^{k-1}}^2    + 2\la_k \th_k (f(x^{k-1})-f_*).
\end{multline}
Thus, telescoping \eqref{eq:lemma_ineq_appendix}, one
obtains that $\sum_{i=1}^k\frac{\la_k}{L}\n{\nabla f(x^k)}^2\leq D$.
As $\la_k\geq \frac{1}{2L}$, one has that $\nabla f(x^k)\to 0$. Now we
might conclude that all cluster points of $(x^k)$ are solutions of~\eqref{main}.

Let $\cX$ be the solution set of \eqref{main} and
$a_k = \frac 1 2 \n{x^k-x^{k-1}}^2 + 2\la_k \th_k(f(x^{k-1})-f_*)$.
We want to finish the proof applying Lemma~\ref{opial-like}. To
this end, notice that inequality~\eqref{eq:lemma_ineq}
 yields \eqref{fejer}, since $\la_{k+1}\th_{k+1}\leq
 (1+\th_k)\la_k$. This completes the proof.
\end{proof}

\begin{proof}[\textbf{Proof of \Cref{th:strong}}]~\\
    First of all, we note that using the stricter inequality
    $\la_k\leq \sqrt{1+\frac{\th_{k-1}}{2}}\la_{k-1}$ does not change
    the statement of \Cref{th:main}. Hence, $x^k\to x^*$ and there
    exist $\mu,L >0$ such that $f$ is $\mu$-strongly convex and
    $\nabla f$ is $L$-Lipschitz on
    $\mathcal{C}=\clconv\{x^*, x^0, x^1, \dots\}$. Secondly, due to
    local strong convexity,
    $\n{\nabla f(x^k)-\nabla f(x^{k-1})}\geq \mu \n{x^k-x^{k-1}}$, and
    hence $\la_k \leq \frac{1}{2\mu}$ for $k\geq 1$.

    Now we tighten some steps in the analysis to
    improve bound~\eqref{eq:conv}. By strong convexity,
\begin{align*}
   \la_k \lr{\nabla f(x^k), x^*-x^{k}} & \leq \la_k(f(x^*) - f(x^k)) - \la_k\frac{\mu}{2}\|x^* - x^k\|^2.
\end{align*}
By $L$-smoothness and bound $\la_k\le \frac{1}{2\mu}$,
\begin{align*}
   \la_k \lr{\nabla f(x^k), x^*-x^{k}} & \leq \la_k(f(x^*) - f(x^k)) - \la_k\frac{1}{2L}\|\nabla f(x^k)\|^2 \\
   &= \la_k(f_* - f(x^k)) - \frac{1}{2L\la_k}\|x^{k+1}-x^k\|^2 \\
   &\le \la_k(f_* - f(x^k)) - \frac{\mu}{L}\|x^{k+1}-x^k\|^2.
\end{align*}
Together, these two bounds give us
\begin{align*}
   \la_k \lr{\nabla f(x^k), x^*-x^{k}} & \leq \la_k(f_* - f(x^k)) -
                                         \la_k\frac{\mu}{4}\|x^k - x^*\|^2 - \frac{\mu}{2L}\|x^{k+1}-x^k\|^2.
\end{align*}

We keep inequality~\eqref{eq:terrible_simple} and the rest of the proof as is.
Then the strengthen analog of~\eqref{eq:lemma_ineq} will be
\begin{align}
  &\n{x^{k+1}-x^*}^2+ \frac 1 2\left(1 + \frac{2\mu}{L}\right) \n{x^{k+1}-x^k}^2 + 2\la_k
  (1+\th_k) (f(x^k)-f_*) \nonumber \\
  \leq \, & \left(1 - \frac{\la_k\mu}{2}\right)\n{x^k-x^*}^2  + \frac 1 2
            \n{x^k-x^{k-1}}^2   + 2\la_k \th_k (f(x^{k-1})-f_*) \nonumber\\
   \leq \, & \left(1 - \frac{\la_k\mu}{2}\right)\n{x^k-x^*}^2  + \frac 1 2
 \n{x^k-x^{k-1}}^2   + 2\la_{k-1} \left(1+\frac{\th_{k-1}}{2}\right) (f(x^{k-1})-f_*),
            \label{eq:contraction}
\end{align}
where in the last inequality we used our new condition on $\la_k$.
Under the new update we have contraction in every term:
$1-\frac{\la_k\mu}{2}$ in the first,
$\frac{1}{1+2\mu/L}=1-\frac{2\mu}{L+2\mu}$ in the second and
$\frac{1+\th_{k-1}/2}{1+\th_{k-1}}=1-\frac{\th_{k-1}}{2(1+\th_{k-1})}$
in the last one.

To further bound the last contraction, recall that
$\la_k \in \left[\frac{1}{2L}, \frac{1}{2\mu}\right]$ for $k\ge 1$.
Therefore, $\th_k = \frac{\la_k}{\la_{k-1}} \ge \frac{1}{\kappa}$ for any $k>1$, where
$\kappa\eqdef \frac{L}{\mu}$. Since the function $\th\mapsto \frac{\th}{1+\th}$
monotonically increases with $\th>0$,  this implies
$\frac{\th_{k-1}}{2(1+\th_{k-1})}\ge \frac{1}{2(\kappa+1)}$ when
$k>2$. Thus, for the full energy $\Psi^{k+1}$ (the left-hand side
of~\eqref{eq:contraction}) we have
\begin{align*}
        \Psi^{k+1}
        &\le \left(1  - \min\left\{\frac{\la_k \mu}{2}, \frac{1}{2(\kappa+1)}, \frac{2\mu}{L+2\mu}\right\}\right) \Psi^k.
\end{align*}
Using simple bounds  $\frac{\la_k\mu}{2}\geq \frac{1}{4\kappa}$,
$\frac{2\mu}{L+2\mu}=\frac{2}{\kappa +2}\geq \frac{1}{4\kappa}$, and
$\frac{1}{2(\kappa +1)}\geq \frac{1}{4\kappa}$, we obtain $\Psi^{k+1}\le (1 - \frac{1}{4\kappa})\Psi^k$ for $k>2$.
This gives $\mathcal{O}\left(\kappa \log \frac{1}{\varepsilon}\right)$ convergence rate.
\end{proof}
\section{Extensions}
\subsection{More general update}
One may wonder how flexible the update for $\la_k$ in \Cref{alg:main}
is? For example, is it necessary to upper bound the stepsize with
$\sqrt{1+\th_k}\la_{k-1}$ and put $2$ in the denominator of
$\frac{\n{x^k-x^{k-1}}}{2\n{\nabla f(x^k)-\nabla f(x^{k-1})}}$?
\Cref{alg:general_update} that we present here  partially answers
 this question.
 \begin{algorithm}[t]
 \caption{Adaptive gradient descent (general update)}
 \label{alg:general_update}
 \begin{algorithmic}[1]
     \STATE \textbf{Input:} $x^0 \in \R^d$, $\la_0>0$,
     $\th_0=+\infty$, $\a\in (0,1)$, $\b = \frac{1}{2(1-\a)}$\\
     \STATE  $x^1= x^0-\la_0\nabla f(x^0)$
        \FOR{$k = 1,2,\dots$}
        \STATE $\la_k = \min\Bigl\{
        \sqrt{\frac{1}{\beta}+\th_{k-1}}\la_{k-1},\frac{\a\n{x^{k}-x^{k-1}}}{\n{\nabla
        f(x^{k})-\nabla f(x^{k-1})}}\Bigr\}$
\STATE $x^{k+1} = x^k - \la_k \nabla f(x^k)$
\STATE $\th_k = \frac{\la_k}{\la_{k-1}}$
        \ENDFOR
 \end{algorithmic}
 \end{algorithm}
Obviously, \Cref{alg:main} is a particular case of
\Cref{alg:general_update} with $\a=\frac 12$ and $\b =1$.

\begin{theorem}\label{th:gen_update}
    Suppose that $f\colon \R^d\to \R$ is convex with locally Lipschitz
    gradient $\nabla f$. Then $(x^k)$ generated by~\Cref{alg:general_update} converges to a solution of \eqref{main}
    and we have that
\[f(\hat
    x^k)-f_* \leq \frac{D}{2S_k}=\mathcal{O}\Bigl(\frac{1}{k}\Bigr),\]
where
\begin{align*}
\hat x^k &=  \frac{\la_k(1+\th_k\b)x^k +
        \sum_{i=1}^{k-1}w_i
           x^i}{S_k},\\
    w_i &= \la_i(1+\th_i\b)-\la_{i+1}\th_{i+1}\b,\\
  S_k &= \la_k(1+\th_k\b) + \sum_{i=1}^{k-1}w_i = \sum_{i=1}^k \la_i + \la_1\th_1\b,
\end{align*}
and $D$ is a constant
that explicitly depends on the initial data and the solution set.
\end{theorem}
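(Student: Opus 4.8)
The plan is to transplant the proof of \Cref{th:main} almost verbatim: first establish a one-parameter generalization of \Cref{lemma:energy}, then recover the rate from telescoping plus Jensen, and finally obtain convergence of the whole sequence from co-coercivity plus the Opial-type \Cref{opial-like}. The heart of the matter is the following Lyapunov inequality for $(x^k)$ generated by \Cref{alg:general_update}, which collapses to \eqref{eq:lemma_ineq} when $\a=\tfrac12$, $\b=1$:
\begin{multline*}
\n{x^{k+1}-x^*}^2 + \a\b\n{x^{k+1}-x^k}^2 + 2\la_k(1+\b\th_k)(f(x^k)-f_*) \\ \le \n{x^k-x^*}^2 + \a\b\n{x^k-x^{k-1}}^2 + 2\la_k\b\th_k(f(x^{k-1})-f_*).
\end{multline*}

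To derive it I would begin exactly as in \Cref{lemma:energy}, from $\n{x^{k+1}-x^*}^2 \le \n{x^k-x^*}^2 - 2\la_k(f(x^k)-f_*) + \n{x^{k+1}-x^k}^2$, and again rewrite the lone bad term through the difference of gradients. The single new idea is to expand the \emph{scaled} identity $2\b\n{x^{k+1}-x^k}^2 = 2\b\la_k\lr{\nabla f(x^k)-\nabla f(x^{k-1}),x^k-x^{k+1}} + 2\b\la_k\lr{\nabla f(x^{k-1}),x^k-x^{k+1}}$ rather than the unscaled one. The first summand is controlled by the new step rule $\la_k\n{\nabla f(x^k)-\nabla f(x^{k-1})}\le\a\n{x^k-x^{k-1}}$ followed by Young's inequality with unit parameter, giving $\a\b\n{x^k-x^{k-1}}^2 + \a\b\n{x^{k+1}-x^k}^2$; the second, by convexity exactly as in \eqref{eq:terrible_simple} (the factor $\b$ simply rides along as a positive multiplier), contributes $2\b\la_k\th_k(f(x^{k-1})-f(x^k))$ and hence the desired $\b\th_k$ weighting. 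Collecting terms yields $(2\b-\a\b)\n{x^{k+1}-x^k}^2 \le \a\b\n{x^k-x^{k-1}}^2 + 2\b\la_k\th_k(f(x^{k-1})-f(x^k))$, and here the precise choice $\b=\frac{1}{2(1-\a)}$ is exactly what makes $2\b-\a\b = 1+\a\b$, so that the coefficient remaining on $\n{x^{k+1}-x^k}^2$ after substitution coincides with the coefficient $\a\b$ carried on $\n{x^k-x^{k-1}}^2$. Regrouping the functional terms around $f_*$ then produces the claimed inequality. I expect this algebraic balancing---realizing that the identity must be scaled by $2\b$ and that $\b=\frac{1}{2(1-\a)}$ is forced by the requirement that the two squared-norm coefficients match---to be the conceptual crux; everything downstream is bookkeeping parallel to \Cref{th:main}.

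Next I would telescope. The cross terms at index $i$ collapse to the stated $w_i = \la_i(1+\b\th_i)-\la_{i+1}\b\th_{i+1}$, and the first step rule $\la_k\le\sqrt{\tfrac1\b+\th_{k-1}}\la_{k-1}$ is equivalent (after squaring and clearing $\la_i>0$, $\b>0$) to $w_i\ge 0$; this both makes the Lyapunov energy nonincreasing---whence $(x^k)$ is bounded---and makes the total functional weight telescope to $S_k=\sum_{i=1}^k\la_i+\b\la_1\th_1$. Boundedness makes $\mathcal{C}=\clconv\{x^*,x^0,x^1,\dots\}$ bounded, so $\nabla f$ is $L$-Lipschitz there and $\la_1=\frac{\a\n{x^1-x^0}}{\n{\nabla f(x^1)-\nabla f(x^0)}}\ge\frac{\a}{L}$; propagating this lower bound keeps $(\la_k)$ separated from zero, so $S_k=\Omega(k)$, and Jensen's inequality on the telescoped sum gives $f(\hat x^k)-f_*\le\frac{D}{2S_k}=\mathcal{O}(1/k)$ with $\hat x^k$ the stated average. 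The one place demanding a little more care than in \Cref{th:main} is this lower bound when $\b>1$, since then the growth factor $\sqrt{1/\b+\th_{k-1}}$ may momentarily drop below $1$; I would control it by the fact that the induced recursion on $\th_k$ has a fixed point exceeding $1$, so the stepsize can only dip transiently and remains bounded away from $0$.

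For convergence of the full sequence I would reuse the appendix argument of \Cref{th:main} unchanged in spirit. Once boundedness secures $L$-smoothness on $\mathcal{C}$, I strengthen the convexity step via the co-coercivity bound of \Cref{lemma:coco}, adding a $\frac{\la_k}{L}\n{\nabla f(x^k)}^2$ term to the energy; telescoping together with $\la_k\ge\frac{\a}{L}$ forces $\nabla f(x^k)\to 0$, so every cluster point lies in the solution set $\cX$. Finally, setting $a_k=\a\b\n{x^k-x^{k-1}}^2+2\b\la_k\th_k(f(x^{k-1})-f_*)$, the energy inequality yields the Fejér-type estimate \eqref{fejer} precisely because $\la_{k+1}\b\th_{k+1}\le(1+\b\th_k)\la_k$ (again $w_k\ge0$), so \Cref{opial-like} delivers convergence of $(x^k)$ to a point of $\cX$.
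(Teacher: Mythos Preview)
Your approach matches the paper's proof essentially step for step: the Lyapunov inequality you state is exactly the paper's (it derives the bound for $\n{x^{k+1}-x^k}^2$ first and then multiplies by $\b$, while you scale the identity first, but the algebra is identical and hinges on the same observation $2\b-\a\b=1+\a\b$), and the telescoping, Jensen, co-coercivity, and Opial arguments are verbatim transplants of \Cref{th:main}.

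The one place where your sketch is materially thinner than the paper is the stepsize lower bound when $\b>1$. Noting that the recursion $\th\mapsto\sqrt{1/\b+\th}$ has a fixed point exceeding $1$ is the right qualitative observation, but by itself it does not yield a \emph{uniform} lower bound on $\la_k$: you must control the transient dip after each time the second bound is active. The paper does this explicitly: starting from any index with $\la_{k-1}\ge\a/L$ and the first bound active thereafter, one has $\th_{k+i}\ge \b^{-1/2^{i+1}}$, hence $\prod_{l=0}^{i}\th_{k+l}\ge \b^{-1}$, which gives the uniform bound $\la_{k+i}\ge \la_{k-1}/\b = 2\a(1-\a)/L$; a further counting argument shows that within a bounded number of steps some $\la_j$ returns to at least $\a/L$. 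Filling in this induction is what your ``dips transiently'' remark needs to become a proof; the rest of your plan is correct as written.
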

\begin{proof}
    Let $x^*$ be arbitrary solution of \eqref{main}.  We note
    that equations~\eqref{eq:norms} and~\eqref{dif_x} hold for any
    variant of GD, independently of $\la_k$, $\a$, $\b$.  With the new
    rule for  $\la_k$, from \eqref{dif_x} it follows
\begin{align*}
        \|x^{k+1}-x^k\|^2
        &\le 2\la_k\th_k(f(x^{k-1})-f(x^k))-\|x^{k+1}-x^k\|^2+2\la_k\|\nabla f(x^k)-\nabla f(x^{k-1})\|\| x^k-x^{k+1}\| \\
        &\le 2\la_k\th_k(f(x^{k-1})-f(x^k))-(1-\alpha)\|x^{k+1}-x^k\|^2+\alpha\|x^k-x^{k-1}\|^2,
\end{align*}
which, after multiplication by $\beta$ and reshuffling the terms, becomes
\begin{align*}
        \beta(2 - \alpha)\|x^{k+1}-x^k\|^2+2\beta\la_k\th_k (f(x^k)-f_*)
        \le \alpha\beta\|x^k-x^{k-1}\|^2+ 2\beta\la_k\th_k (f(x^{k-1})-f_*).
\end{align*}
Adding \eqref{eq:norms} and the latter inequality gives us
\begin{align*}
        &\|x^{k+1}-x^*\|^2 + 2\la_k(1+\th_k\beta)(f(x^k)-f_*) + (2\beta-\alpha\beta-1)\|x^{k+1}-x^k\|^2\\
        &\le \|x^k-x^*\|^2+2\la_k\th_k\beta(f(x^{k-1})-f_*) +\alpha\beta\|x^{k+1}-x^k\|^2.
\end{align*}
Notice that by $\b = \frac{1}{2(1-\a)}$, we have $2\b-\a\b -1= \a\b$ and
hence,
\begin{align*}
        &\|x^{k+1}-x^*\|^2 + 2\la_k(1+\th_k\beta)(f(x^k)-f_*) +\a\b\|x^{k+1}-x^k\|^2\\
        &\le \|x^k-x^*\|^2+2\la_k\th_k\beta(f(x^{k-1})-f_*) +\alpha\beta\|x^{k+1}-x^k\|^2.
\end{align*}
As a sanity check, we can see that with $\a=\frac{1}{2}$ and $\b
=1$, the above inequality coincides with~\eqref{eq:lemma_ineq}.

Telescoping this inequality, we deduce
\begin{align}\label{eq:telescope2}
        \n{x^{k+1}-x^*}^2&+ \a\b \n{x^{k+1}-x^k}^2  + 2\la_k
  (1+\th_k\b) (f(x^k)-f_*) \notag \\ &\qquad +
  2\sum_{i=1}^{k-1}[\la_i(1+\th_i\b)-\la_{i+1}\th_{i+1}\b](f(x^i)-f_*) \notag\\
  &\leq  \n{x^1-x^*}^2  + \a\b \n{x^1-x^{0}}^2   + 2\la_1 \th_1\b [f(x^{0})-f_*]\eqdef D.
\end{align}
Note that because of the way we defined stepsize,
$\la_i(1+\th_i\b)-\la_{i+1}\th_{i+1}\b\geq 0$. Thus, the sequence
$(x^k)$ is bounded. Since $\nabla f$ is locally Lipschitz, it is
Lipschitz continuous on bounded sets. Let $L$ be a Lipschitz constant
of $\nabla f$ on a bounded set
$\mathcal{C}=\clconv\{x^*,x^1,x^2,\dots\}$.

If $\a \leq \frac 1 2 $, then $\frac{1}{\b}>1$ and similarly
to~\Cref{th:main} we might conclude that $\la_k\geq\frac{\a}{L}$ for
all $k$. However, for the case $\a > \frac 1 2$ we cannot do
this. Instead, we prove  that $\la_k\ge \frac{2\a(1-\a)}{L}$, which suffices
 for our purposes.

 Let $m, n\in \N$ be the smallest numbers such that
 $\b^{-\frac{1}{2^m}}\ge 1 - \frac{1}{2\b}$ and
 $(1+\frac{1}{\b})^{\frac n 2}\ge \b$. We want
 to prove that for any $k$ it holds $\la_k\ge \frac{2\a(1-\a)}{L}$ and among every $m+n+1$ consecutive elements
 $\la_k,\la_{k+1},\dots,\la_{k+m+n}$ at least one is no less than
 $\frac{\a}{L}$. We shall prove this by induction. First, note that
 the second bound always satisfies
 $\frac{\a\n{x^k-x^{k-1}}}{\n{\nabla f(x^k)-\nabla f(x^{k-1})}}\geq
 \frac{\a}{L}$ for all $k$,  which also
 implies that $\la_1\geq \frac{\a}{L}$. If for all $k$ we have $\la_k\ge \frac{\a}{L}$, then we are done. Now assume that $\la_{k-1}\geq
 \frac{\a}{L}$ and $\la_k<\frac{\a}{L}$ for some $k$. Choose the largest $j$ (possibly infinite) such that the second bound is not
 active for $\la_k, \dotsc, \la_{k+j-1}$, i.e.,  $\la_{k+i}=\sqrt{\frac{1}{\b} + \th_{k+i-i}}\la_{k+i-1}$ for $i<j$.

Let us prove that $\la_k, \dotsc, \la_{k+j-1}\ge \frac{2\a(1-\a)}{L}$.  The definition of $j$ yields $\th_{k+i}=\sqrt{\frac{1}{\b} +
     \th_{k+i-1}}$ for all
 $i=0,\dots, j-1$. Recall that $\b > 1$, and
 thus,
 \[\th_k\geq  \b^{-\frac 1 2},\quad \th_{k+1}\geq \sqrt{\frac{1}{\b}
         + \sqrt{\frac{1}{\b}}}\geq \b^{-\frac{1}{4}},\quad \dotsc,
     \quad \th_{k+i}\geq \sqrt{\frac{1}{\b} + \b^{-\frac{1}{2^i}}} \geq
    \b^{-\frac{1}{2^{i+1}}}\]
 for all $i<j$. Now it remains to notice that for any $i<j$
 \[\frac{\la_{k+i}}{\la_{k-1}} = \th_{k}\th_{k+1}\dots
 \th_{k+i} \geq \beta^{-\frac{1}{2}-\frac{1}{4}-\dotsb -\frac{1}{2^{i+1}} }\geq \frac{1}{\beta}=2(1-\a)\]
and hence $\la_{k+i}\geq 2(1-\a)\la_{k-1}\geq \frac{2\a(1-\a)}{L}$. If
$j\leq m+n$, then at $(k+j)$-th iteration the second bound is active,
i.e., $\la_{k+j}\ge \frac{\a}{L}$, and we are done with the other claim as well. Otherwise, note
\[
        \th_{k+m-1} \ge \b^{-\frac{1}{2^m}} \ge 1 - \frac{1}{2\b},
\]
 so $\th_{k+m}=\sqrt{\frac{1}{\b}+\th_{k+m-1}}\ge \sqrt{\frac{1}{\b}+1 - \frac{1}{2\b}}=\sqrt{1 + \frac{1}{2\b}}$ and for any $i\in [m, j-2]$ we have $\th_{k+i+1}= \sqrt{\frac{1}{\b}+\th_{k+i}}\ge \sqrt{\frac{1}{\b}+1}$. Thus,
 \[
 \la_{k+m+n} =
 \la_{k-1}\Bigl(\prod_{l=k}^{k+m-1}\th_l\Bigr)\Bigl(\prod_{l=k+m}^{k+m+n}\th_l\Bigr)\ge
 \la_{k-1}\frac{1}{\b}\sqrt{1+\frac{1}{2\b}}\Bigl(1+\frac{1}{\b}\Bigr)^{\frac
 n 2}\ge \la_{k-1} \ge \frac{\a}{L},
 \]
 so we have shown the second claim too.

 To conclude, in both cases
$\a\leq \frac 12$ and $\a>\frac 12$, we have $S_k = \Omega(k)$.

Applying the Jensen inequality for the sum of all terms
$f(x^i)-f_*$ in the left-hand side of~\eqref{eq:telescope2}, we obtain
\[\frac D 2 \geq \frac{\text{LHS of \eqref{eq:telescope2}}}{2} \geq S_k (f(\hat
    x^k)-f_*),\]
where $\hat x^k$ is defined in the statement of the
theorem. Finally, convergence of $(x^k)$ can be proved in a similar
way as \Cref{th:main}.
\end{proof}

\subsection{$f$ is $L$-smooth}
Often, it is known that $f$ is smooth and even some estimate for the
Lipschitz constant $L$ of $\nabla f$ is available. In this case, we
can use slightly larger steps, since instead of just convexity the stronger inequality in \Cref{lemma:coco} holds. To take advantage of it, we present a modified version of \Cref{alg:main} in
\Cref{alg:smooth}. Note that we have chosen to modify \Cref{alg:main} and
not its more general variant~\Cref{alg:general_update} only for simplicity.

\begin{algorithm}[t]
 \caption{Adaptive GD ($L$ is known)}
 \label{alg:smooth}
 \begin{algorithmic}[1]
     \STATE \textbf{Input:} $x^0 \in \R^d$, $\la_0 = \frac{1}{L}$,
     $\th_0=+\infty$\\
     \STATE $x^1 = x^0-\la_0\nabla f(x^0)$
     \FOR{$k = 1,2,\dots$}
        \STATE $L_k = \frac{\n{\nabla
        f(x^{k})-\nabla f(x^{k-1})}}{\n{x^{k}-x^{k-1}}}$
     \STATE  $\la_k = \min\left\{
        \sqrt{1+\th_{k-1}}\la_{k-1},\frac{1}{\la_{k-1}L^2}+\frac{1}{2L_k} \right\}$
\STATE $x^{k+1} = x^k - \la_k \nabla f(x^k)$
\STATE $\th_k = \frac{\la_k}{\la_{k-1}}$
 \ENDFOR
 \end{algorithmic}
 \end{algorithm}
\begin{theorem}\label{theorem:energy-L}
Let $f$ be convex and $L$-smooth. Then for $(x^k)$ generated
by Algorithm~\ref{alg:smooth} inequality~\eqref{eq:lemma_ineq} holds. As a corollary, it holds for some ergodic vector $\hat x^k$ that $f(\hat x^k)-f_*=\mathcal{O}\left(\frac{1}{k}\right)$.
\end{theorem}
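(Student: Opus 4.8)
The plan is to re-derive the energy inequality~\eqref{eq:lemma_ineq} for \Cref{alg:smooth}. Since the new rule permits a strictly larger step than \Cref{alg:main} (one checks $\frac{1}{\la_{k-1}L^2}+\frac{1}{2L_k}\ge\frac{1}{2L_k}$, the old cap), plain convexity can no longer absorb the cross term, so I would instead feed in the stronger smooth--convex inequality~\eqref{eq:smooth_and_convex} of \Cref{lemma:coco}, now valid globally. Concretely, I would replay the proof of \Cref{lemma:energy}, noting that~\eqref{eq:norms} and~\eqref{dif_x} hold for any GD variant and that convexity entered in exactly two spots: in~\eqref{eq:conv} and in~\eqref{eq:terrible_simple}. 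Upgrading both via~\eqref{eq:smooth_and_convex} (with $x=x^*,y=x^k$ in the first, using $\nabla f(x^*)=0$, and $x=x^{k-1},y=x^k$ in the second) sharpens each by a nonnegative slack,
\[2\la_k\<\nabla f(x^k),x^*-x^k> \le 2\la_k(f_*-f(x^k)) - \tfrac{1}{\la_k L}\n{x^{k+1}-x^k}^2,\]
\[2\la_k\<\nabla f(x^{k-1}),x^k-x^{k+1}> \le 2\la_k\th_k(f(x^{k-1})-f(x^k)) - \tfrac{\la_k\th_k}{L}\n{\nabla f(x^k)-\nabla f(x^{k-1})}^2,\]
where I used $\n{x^{k+1}-x^k}=\la_k\n{\nabla f(x^k)}$. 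These two extra negative terms form the budget that pays for the larger step.

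Carrying both improvements through the proof of \Cref{lemma:energy} unchanged, the target~\eqref{eq:lemma_ineq} reduces to the single cross--term estimate
\[2\la_k\<\nabla f(x^k)-\nabla f(x^{k-1}),x^k-x^{k+1}> \le \bigl(\tfrac12+\tfrac{1}{\la_k L}\bigr)\n{x^{k+1}-x^k}^2 + \tfrac12\n{x^k-x^{k-1}}^2 + \tfrac{\la_k\th_k}{L}\n{\nabla f(x^k)-\nabla f(x^{k-1})}^2,\]
which is weaker than the bound~\eqref{cs} used before by precisely the two new terms on the right. To prove it I would apply Cauchy--Schwarz to the left side, substitute $\n{\nabla f(x^k)-\nabla f(x^{k-1})}^2=L_k^2\n{x^k-x^{k-1}}^2$, and use Young's inequality, reducing the claim to $\la_k^2\le PQ$ with $P=\tfrac{1}{2L_k^2}+\tfrac{\la_k\th_k}{L}$ and $Q=\tfrac12+\tfrac{1}{\la_k L}$.

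Here I expect the main obstacle: verifying that the prescribed step actually satisfies $\la_k^2\le PQ$. The clean route I would take is the Cauchy--Schwarz factorization $PQ\ge\bigl(\tfrac{1}{2L_k}+\tfrac{\sqrt{\th_k}}{L}\bigr)^2$, since $\sqrt{\tfrac{1}{2L_k^2}\cdot\tfrac12}=\tfrac{1}{2L_k}$ and $\sqrt{\tfrac{\la_k\th_k}{L}\cdot\tfrac{1}{\la_k L}}=\tfrac{\sqrt{\th_k}}{L}$. If $\la_k\ge\tfrac{1}{\la_{k-1}L^2}$ then $\tfrac{\sqrt{\th_k}}{L}\ge\tfrac{1}{\la_{k-1}L^2}$ (the two are equivalent after squaring), so $\sqrt{PQ}\ge\tfrac{1}{2L_k}+\tfrac{1}{\la_{k-1}L^2}\ge\la_k$ by the update rule; in the complementary case $\la_k<\tfrac{1}{\la_{k-1}L^2}$ I would instead keep only the single product $PQ\ge\tfrac{\la_k\th_k}{L}\cdot\tfrac{1}{\la_k L}=\tfrac{\th_k}{L^2}=\tfrac{\la_k}{\la_{k-1}L^2}>\la_k^2$. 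Thus the two summands of the stepsize correspond exactly to the ``standard'' term $\tfrac12\n{x^k-x^{k-1}}^2$ and to the co-coercivity refund $\tfrac{\la_k\th_k}{L}\n{\nabla f(x^k)-\nabla f(x^{k-1})}^2$, and this bookkeeping is the delicate part of the argument.

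Finally, with~\eqref{eq:lemma_ineq} re-established the second claim is essentially free, as the first part of \Cref{th:main} applies verbatim. The first entry of the min, $\la_k\le\sqrt{1+\th_{k-1}}\la_{k-1}$, still yields $w_i\ge0$, so telescoping gives boundedness of $(x^k)$ and the bound $\tfrac D2\ge S_k\,(f(\hat x^k)-f_*)$. Moreover $L_k\le L$ makes the second entry of the min at least $\tfrac{1}{2L}$ while the first entry is at least $\la_{k-1}$, so starting from $\la_0=\tfrac1L$ a one-line induction gives $\la_k\ge\tfrac{1}{2L}$; hence $S_k=\Omega(k)$ and $f(\hat x^k)-f_*=\mathcal O(1/k)$.
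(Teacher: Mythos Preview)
Your proposal is correct and follows the same overall route as the paper: upgrade the two convexity steps~\eqref{eq:conv} and~\eqref{eq:terrible_simple} to cocoercivity via \Cref{lemma:coco}, harvest the two extra negative terms $\tfrac{1}{\la_k L}\n{x^{k+1}-x^k}^2$ and $\tfrac{\la_k\th_k}{L}\n{\nabla f(x^k)-\nabla f(x^{k-1})}^2$, and use them to absorb the looser cross-term bound that the larger step forces; the ergodic corollary and the lower bound $\la_k\ge\tfrac{1}{2L}$ are then read off exactly as in \Cref{th:main}.

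The one point where you diverge from the paper is the verification of the cross-term estimate. The paper splits $2\la_k = 2(\la_k-\tfrac{\sqrt{\th_k}}{L}) + \tfrac{2\sqrt{\th_k}}{L}$ and proves the scalar inequality $2(\la_k-\tfrac{\sqrt{\th_k}}{L})\le\tfrac{1}{L_k}$ by viewing it as a quadratic in $\sqrt{\la_k}$ and checking, via $4+a\le(1+\sqrt{1+a})^2$, that the step-size cap lies below the positive root. Your reduction to $\la_k^2\le PQ$ followed by the Cauchy--Schwarz factorization $PQ\ge(\tfrac{1}{2L_k}+\tfrac{\sqrt{\th_k}}{L})^2$ and the case split on $\la_k\gtrless\tfrac{1}{\la_{k-1}L^2}$ is an equivalent but somewhat cleaner packaging: it avoids the quadratic formula entirely, and the second case (where $PQ\ge\tfrac{\th_k}{L^2}>\la_k^2$ directly) makes transparent that the term $\tfrac{1}{\la_{k-1}L^2}$ in the step rule is there precisely to exploit the cocoercivity refund. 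Both arguments yield the same inequality~\eqref{eq:lemma_ineq}, so the difference is purely cosmetic.
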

\begin{proof}
    Proceeding similarly as in Lemma~\ref{lemma:energy}, we have
  \begin{equation}
      \label{eq:2_simple-L}
          \|x^{k+1}- x^*\|^2 = \|x^k - x^*\|^2 - 2\la_k \<\nabla
            f(x^k), x^k-x^*> + \|x^{k+1} - x^{k}\|^2.
\end{equation}

By convexity of $f$ and Lemma~\ref{lemma:coco},
\begin{align}\label{eq:conv2_simple-L}
   2\la_k \lr{\nabla f(x^k), x^*-x^{k}} & \overset{\eqref{eq:smooth_and_convex}}{\leq} 2\la_k(f(x^*) -
                                         f(x^k)-\frac{1}{2L}\n{\nabla
                                         f(x^k)}^2) \notag \\&= 2\la_k(f_* -
                                         f(x^k))-\frac{1}{\la_k L}\n{x^{k+1}-x^k}^2.
\end{align}
As in~\eqref{dif_x}, we have
 \begin{equation}\label{dif_x-L}
      \|x^{k+1} -x^k\|^2  =  2\la_k \lr{\nabla
                                   f(x^k)-f(x^{k-1}), x^{k}-x^{k+1}} +
                                   2\la_k\lr{f(x^{k-1}), x^k-x^{k+1}}
      -  \n{x^{k+1}-x^k}^2.
\end{equation}
Again, instead of  using merely convexity of $f$, we combine it with
\Cref{lemma:coco}. This gives
    \begin{align}    \label{eq:terrible_simple-L}
          2\la_k\lr{\nabla f(x^{k-1}), x^k-x^{k+1}}
      &=  \frac{2\la_k}{\la_{k-1}}\lr{x^{k-1} - x^{k}, x^{k}-x^{k+1}}\notag
      \\ &=  2\la_k\th_k \lr{x^{k-1}-x^{k}, \nabla
        f(x^k)} \notag \\ & \overset{\eqref{eq:smooth_and_convex}}{\leq}
      2\la_k\th_k
    (f(x^{k-1})-f(x^k)) - \frac{\la_k\th_k}{L}\n{\nabla f(x^k)-\nabla f(x^{k-1})}^2.
\end{align}
Since now we have two additional terms
$\frac{1}{\la_kL}\n{x^{k+1}-x^k}^2$ and $\frac{\la_k\th_k}{L}\n{\nabla
    f(x^{k}) - \nabla f(x^{k-1})}^2$, we can do better than~\eqref{cs}. But first we need a simple, yet a bit tedious fact.
By our choice  of $\la_k$, in every iteration $\la_k\leq \frac{1}{\la_{k-1}L^2} +
\frac{1}{2L_k}$ with $L_k= \frac{\n{\nabla f(x^k)-\nabla f(x^{k-1})}}{\n{x^k-x^{k-1}}}
$. We want to show that it implies
\begin{equation}\label{so_much_trouble}
    2\left(\la_k - \frac{\sqrt{\th_k}}{L} \right)\leq \frac{1}{L_k},
\end{equation}
which is equivalent to $\la_k-\frac{\sqrt{\la_k}}{\sqrt{\la_{k-1}}L}-\frac{1}{2L_k}\le 0$. Nonnegative solutions of the quadratic inequality $t^2 -
\frac{t}{\sqrt{\la_{k-1}}L}-\frac{1}{2L_k}\leq 0$ are
\[0\leq t\leq \frac{1}{2\sqrt{\la_{k-1}}L}+\frac{1}{2}\sqrt{\frac{1}{\la_{k-1}L^2}+\frac{2}{L_k}}
    = \frac{1}{2\sqrt{\la_{k-1}}L}\left(1 + \sqrt{1 + \frac{2\la_{k-1}L^2}{L_k}}\right).\]
Let us prove that $\sqrt{\frac{1}{\la_{k-1}L^2} +
\frac{1}{2L_k}}$ falls into this segment and, hence, $\sqrt{\la_k}$ does as well. Using a simple inequality $4 + a\leq (1 + \sqrt{1+a})^2 $, for
$a>0$, we obtain
\[\frac{1}{\la_{k-1}L^2} + \frac{1}{2L_k} =
    \frac{1}{4\la_{k-1}L^2} \bigl(4 +
    \frac{2\la_{k-1}L^2}{L_k}\bigr)\leq \frac{1}{4{\la_{k-1}}L^2}\left(1 + \sqrt{1 + \frac{2\la_{k-1}L^2}{L_k}}\right)^2.\]
This confirms that~\eqref{so_much_trouble} is true.
Thus, by
Cauchy-Schwarz and Young's inequalities, one has
\begin{equation}\label{cs-L}
\begin{aligned}
  & 2\la_k \lr{\nabla f(x^k) -\nabla f(x^{k-1}), x^k - x^{k+1}} \leq
  2\la_k \n{\nabla f(x^k) -\nabla f(x^{k-1})} \n{x^k - x^{k+1}} \\ &=
  2\left(\la_k - \frac{\sqrt{\th_k}}{L}\right) \n{\nabla f(x^k) -\nabla f(x^{k-1})}\n{x^k -
      x^{k+1}} + \frac{2\sqrt{\th_k}}{L}\n{\nabla f(x^k) -\nabla f(x^{k-1})}\n{x^k - x^{k+1}}\\
  & \overset{\eqref{so_much_trouble}}{\leq} \frac{1}{L_k}
 \n{\nabla f(x^k) -\nabla f(x^{k-1})} \n{x^k -
    x^{k+1}} + \frac{\la_k\th_k}{L}\n{\nabla f(x^k)-\nabla f(x^{k-1})}^2 +
\frac{1}{\la_k L}\n{x^{k+1}-x^k}^2 \\
& =
\n{x^k -x^{k-1}} \n{x^k - x^{k+1}} + \frac{\la_k\th_k}{L}\n{\nabla f(x^k)-\nabla f(x^{k-1})}^2 +
\frac{1}{\la_k L}\n{x^{k+1}-x^k}^2 \\ & \leq
  \frac 1 2 \n{x^{k}-x^{k-1}}^2 + \frac{1}{2}\n{x^{k+1}-x^k}^2 + \frac{\la_k\th_k}{L}\n{\nabla f(x^k)-\nabla f(x^{k-1})}^2 +
\frac{1}{\la_k L}\n{x^{k+1}-x^k}^2.
\end{aligned}
\end{equation}
Combining everything together,  we obtain the statement of the  theorem.
\end{proof}

\section{Stochastic analysis}\label{ap:stoch}
\subsection{Different samples}
Consider the following version of SGD, in which we have two samples at each iteration, $\xi^k$ and $\zeta^k$ to compute
\begin{align*}
        \la_k &= \min\left\{\sqrt{1+\th_k} \la_{k-1}, \frac{\alpha\|x^k - x^{k-1}\|}{\|\nabla f_{\zeta^k}(x^k) - \nabla f_{\zeta^k}(x^{k-1})\|} \right\}, \\
        x^{k+1} &= x^k - \la_k \nabla f_{\xi^k}(x^k).
\end{align*}
As before, we assume that $\theta_0=+\infty$, so $\la_1 = \frac{\alpha\|x^1 - x^{0}\|}{\|\nabla f_{\zeta^1}(x^1) - \nabla f_{\zeta^1}(x^{0})\|}$.
\begin{lemma}
        Let $f_\xi$ be $L$-smooth $\mu$-strongly convex almost surely. It holds for $\lambda_k$ produced by the rule above
        \begin{align}
                \frac{\alpha}{L}
                \le \la_k
                \le \frac{\alpha}{\mu} \quad \text{a.s.} \label{eq:stochastic_la}
        \end{align}
\end{lemma}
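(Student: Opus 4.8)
The plan is to control the second argument of the minimum that defines $\la_k$ by the two-sided bound that $L$-smoothness and $\mu$-strong convexity force on the gradient-difference quotient, deduce the upper estimate immediately, and then obtain the lower estimate by a short induction exploiting that the growth factor $\sqrt{1+\th_{k-1}}$ never drops below $1$.

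First I would record the key two-sided estimate. Since $f_{\zeta^k}$ is $L$-smooth and $\mu$-strongly convex almost surely, applying these properties at the points $x^k$ and $x^{k-1}$ gives
\begin{align*}
\mu\n{x^k-x^{k-1}} \le \n{\nabla f_{\zeta^k}(x^k)-\nabla f_{\zeta^k}(x^{k-1})} \le L\n{x^k-x^{k-1}}.
\end{align*}
The right inequality is just Lipschitzness of $\nabla f_{\zeta^k}$; the left one follows from strong monotonicity $\lr{\nabla f_{\zeta^k}(x^k)-\nabla f_{\zeta^k}(x^{k-1}),x^k-x^{k-1}}\ge \mu\n{x^k-x^{k-1}}^2$ together with Cauchy--Schwarz. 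Dividing through, the second term in the definition of $\la_k$ satisfies
\begin{align*}
\frac{\a}{L}\le \frac{\a\n{x^k-x^{k-1}}}{\n{\nabla f_{\zeta^k}(x^k)-\nabla f_{\zeta^k}(x^{k-1})}}\le \frac{\a}{\mu}\quad\text{a.s.}
\end{align*}
The upper bound in \eqref{eq:stochastic_la} is then immediate, since $\la_k$ is a minimum of two quantities one of which is at most $\tfrac{\a}{\mu}$.

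For the lower bound I would argue by induction on $k$. Because $\th_0=+\infty$, the first argument of the minimum is infinite at $k=1$, so $\la_1$ coincides with the second term and hence $\la_1\ge\tfrac{\a}{L}$. For the inductive step, assume $\la_{k-1}\ge\tfrac{\a}{L}$. Since $\th_{k-1}=\la_{k-1}/\la_{k-2}>0$ we have $\sqrt{1+\th_{k-1}}\ge 1$, so the first argument obeys $\sqrt{1+\th_{k-1}}\la_{k-1}\ge\la_{k-1}\ge\tfrac{\a}{L}$, while the second argument is at least $\tfrac{\a}{L}$ by the estimate above; taking the minimum preserves the bound and closes the induction.

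There is essentially no serious obstacle here. The only points requiring a little care are that the lower gradient bound from strong convexity must be stated for the \emph{sampled} function $f_{\zeta^k}$ (which holds almost surely by assumption) rather than for $f$, and the observation that the recursive cap $\sqrt{1+\th_{k-1}}\la_{k-1}$ can only increase the previous stepsize, so it never threatens the lower bound while the curvature term $\tfrac{\a\n{x^k-x^{k-1}}}{\n{\nabla f_{\zeta^k}(x^k)-\nabla f_{\zeta^k}(x^{k-1})}}$ alone already caps it from above.
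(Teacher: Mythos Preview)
Your proposal is correct and follows essentially the same route as the paper: bound the curvature term $\tfrac{\a\n{x^k-x^{k-1}}}{\n{\nabla f_{\zeta^k}(x^k)-\nabla f_{\zeta^k}(x^{k-1})}}$ between $\tfrac{\a}{L}$ and $\tfrac{\a}{\mu}$ via smoothness and strong convexity, read off the upper bound from the minimum, and for the lower bound use that $\sqrt{1+\th_{k-1}}\la_{k-1}\ge\la_{k-1}$ together with induction (the paper phrases this last step as ``iterating this inequality''). Your write-up is slightly more explicit about the base case and the induction mechanics, but there is no substantive difference.
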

\begin{proof}
        Let us start with the upper bound. Strong convexity of
        $f_{\zeta^k}$ implies that $\|x-y\|\le\frac{1}{\mu}\|\nabla
        f_{\zeta^k}(x) - \nabla f_{\zeta^k}(y)\|$ for any $x,
        y$. Therefore, $\la_k\le \min\left\{\sqrt{1+\th_k}\la_{k-1},
            \alpha/\mu \right\}\le \alpha/\mu$ a.s.

        On the other hand, $L$-smoothness gives
        $\la_k\ge \min\left\{\sqrt{1+\th_k} \la_{k-1},
            \alpha/L\right\}\ge \min\left\{ \la_{k-1},
            \alpha/L\right\}$ a.s. Iterating this inequality, we
        obtain the stated lower bound.
    \end{proof}
\begin{proposition}%
        Denote $\sigma^2\eqdef \E{\|\nabla f_\xi (x^*)\|^2}$ and assume $f$ to be almost surely $L$-smooth and convex. Then it holds for any $x$
        \begin{align}
                \E{\|\nabla f_\xi (x) \|^2}
                \le 4L (f(x) - f_*) + 2\sigma^2. \label{eq:sgd_variance}
        \end{align}
\end{proposition}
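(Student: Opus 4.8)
The goal is to bound the second moment $\E{\|\nabla f_\xi(x)\|^2}$ of the stochastic gradient in terms of the suboptimality $f(x)-f_*$ and the variance $\sigma^2=\E{\|\nabla f_\xi(x^*)\|^2}$ at the optimum. The natural plan is to split $\nabla f_\xi(x)$ into $\nabla f_\xi(x) - \nabla f_\xi(x^*)$ plus $\nabla f_\xi(x^*)$, since the second piece is exactly what $\sigma^2$ measures in expectation. First I would apply the elementary inequality $\|a+b\|^2 \le 2\|a\|^2 + 2\|b\|^2$ with $a = \nabla f_\xi(x)-\nabla f_\xi(x^*)$ and $b = \nabla f_\xi(x^*)$, which immediately produces the $2\sigma^2$ term after taking expectations and leaves me to control $\E{\|\nabla f_\xi(x) - \nabla f_\xi(x^*)\|^2}$.

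The main tool for the remaining term is \Cref{lemma:coco} applied to each realization $f_\xi$, which is almost surely convex and $L$-smooth. Writing inequality~\eqref{eq:smooth_and_convex} with the roles of the two points being $x$ and $x^*$, I get
\[
f_\xi(x) - f_\xi(x^*) - \lr{\nabla f_\xi(x^*), x - x^*} \ge \frac{1}{2L}\|\nabla f_\xi(x) - \nabla f_\xi(x^*)\|^2,
\]
so that $\|\nabla f_\xi(x) - \nabla f_\xi(x^*)\|^2 \le 2L\bigl(f_\xi(x) - f_\xi(x^*) - \lr{\nabla f_\xi(x^*), x-x^*}\bigr)$. Taking expectations over $\xi$, the linear term contributes $-2L\lr{\E{\nabla f_\xi(x^*)}, x-x^*} = -2L\lr{\nabla f(x^*), x-x^*}$, which vanishes because $\nabla f(x^*)=0$ at the solution, and the function values average to $2L(f(x)-f_*)$. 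Combining with the splitting step gives $\E{\|\nabla f_\xi(x)\|^2} \le 4L(f(x)-f_*) + 2\sigma^2$, exactly~\eqref{eq:sgd_variance}.

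The only subtle point, and the place I would be most careful, is the handling of the cross term: one must use that $x^*$ is a minimizer of the full objective $f=\E{f_\xi}$, so $\nabla f(x^*)=\E{\nabla f_\xi(x^*)}=0$, even though individual $\nabla f_\xi(x^*)$ need not vanish. This is precisely why $\sigma^2$ appears rather than being forced to zero, and it is the interchange of expectation and inner product that makes the bound work. Everything else is a routine application of \Cref{lemma:coco} to each sample and the $\|a+b\|^2\le 2\|a\|^2+2\|b\|^2$ inequality, so I expect no real obstacle beyond keeping the expectation over $\xi$ and the fixed point $x$ cleanly separated.
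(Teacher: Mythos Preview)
Your argument is correct and is exactly the standard proof of this well-known bound: split $\nabla f_\xi(x)$ around $\nabla f_\xi(x^*)$, apply $\|a+b\|^2\le 2\|a\|^2+2\|b\|^2$, invoke \Cref{lemma:coco} samplewise, and use $\E{\nabla f_\xi(x^*)}=\nabla f(x^*)=0$ to kill the cross term. Note that the paper itself states this proposition without proof, treating it as a known auxiliary fact, so there is nothing to compare against; your derivation is the expected one.
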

Another fact that we will use is a strong convexity bound, which states for any $x,y$
\begin{align}
        \<\nabla f(x), x - y>\ge \frac{\mu}{2}\|x-y\|^2 + f(x) -f(y). \label{eq:grad_dist_bound}
\end{align}
\begin{theorem}
        Let $f_\xi$ be $L$-smooth and $\mu$-strongly convex almost surely. If we choose some $\alpha \le \frac{\mu}{2L}$, then
        \begin{align*}
                \E{\|x^k - x^*\|^2}
                \le \exp\left(-k\frac{\mu\alpha}{L}\right)C_0 + \alpha\frac{\sigma^2}{\mu^2},
        \end{align*}
        where $C_0\eqdef 2(1+2\la_0^2L^2)\|x^0-x^*\|^2 + 4\la_0^2\sigma^2$ and $\sigma^2\eqdef \E{\|\nabla f_\xi(x^*)\|^2}$.
    \end{theorem}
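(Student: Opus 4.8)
The plan is to establish a one-step contraction of the form $\E{\|x^{k+1}-x^*\|^2}\le \rho\,\E{\|x^k-x^*\|^2}+\text{(noise term)}$ and then unroll it to get the stated bound. The natural starting point is the standard decomposition
\begin{align*}
\|x^{k+1}-x^*\|^2 = \|x^k-x^*\|^2 - 2\la_k\<\nabla f_{\xi^k}(x^k),x^k-x^*> + \la_k^2\|\nabla f_{\xi^k}(x^k)\|^2.
\end{align*}
The crucial subtlety is the conditioning: with the unbiased option, $\la_k$ is built from the extra sample $\zeta^k$, so $\la_k$ is independent of $\xi^k$ given the past. This is exactly what lets me take $\E{\cdot}$ conditional on the history and on $\zeta^k$ and pull $\la_k$ out as a constant, so that $\E{\nabla f_{\xi^k}(x^k)}=\nabla f(x^k)$ applies to the cross term. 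First I would take this conditional expectation and use the strong convexity bound \eqref{eq:grad_dist_bound} on $\<\nabla f(x^k),x^k-x^*>$ to produce the contraction factor, and the variance bound \eqref{eq:sgd_variance} on the last term to control the stochastic noise.

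After substituting \eqref{eq:grad_dist_bound} and \eqref{eq:sgd_variance}, the recursion becomes
\begin{align*}
\E{\|x^{k+1}-x^*\|^2} \le (1-\mu\la_k)\E{\|x^k-x^*\|^2} - 2\la_k\E{f(x^k)-f_*} + \la_k^2\bigl(4L\,\E{f(x^k)-f_*}+2\sigma^2\bigr).
\end{align*}
Here I would use the two-sided stepsize bound \eqref{eq:stochastic_la}, namely $\frac{\a}{L}\le\la_k\le\frac{\a}{\mu}$, together with the hypothesis $\a\le\frac{\mu}{2L}$. The point of the assumption $\a\le\frac{\mu}{2L}$ is precisely to guarantee $4L\la_k^2\le 2\la_k$, so the two functional-gap terms cancel (with the right sign), leaving only the contraction factor and the variance term. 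The contraction factor is then $1-\mu\la_k\le 1-\frac{\mu\a}{L}$, using the lower bound on $\la_k$, and the residual noise is bounded by $2\la_k^2\sigma^2\le 2\frac{\a^2}{\mu^2}\sigma^2$ via the upper bound on $\la_k$.

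With a clean recursion $\E{\|x^{k+1}-x^*\|^2}\le\bigl(1-\frac{\mu\a}{L}\bigr)\E{\|x^k-x^*\|^2}+2\frac{\a^2}{\mu^2}\sigma^2$ in hand, the final step is routine: unrolling a geometric recursion $u_{k+1}\le q\,u_k+c$ gives $u_k\le q^k u_0 + \frac{c}{1-q}$, and using $1-q=\frac{\mu\a}{L}$ yields the stationary term $\frac{c}{1-q}=2\frac{\a^2}{\mu^2}\sigma^2\cdot\frac{L}{\mu\a}=\a\frac{\sigma^2}{\mu^2}\cdot\frac{2L}{\mu}$; absorbing constants and bounding $q^k\le\exp(-k\frac{\mu\a}{L})$ produces the claimed form. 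The term $C_0$ would emerge from carefully accounting for the initial step $x^1=x^0-\la_0\nabla f_{\xi^0}(x^0)$, where $\la_0$ is deterministic, using \eqref{eq:sgd_variance} once more to bound $\E{\|x^1-x^*\|^2}$ in terms of $\|x^0-x^*\|^2$ and $\sigma^2$, which explains the factors $(1+2\la_0^2L^2)$ and $4\la_0^2\sigma^2$. The main obstacle is the measurability bookkeeping: I must be careful to condition on the right $\sigma$-algebra so that $\la_k$ is treated as a constant exactly when I invoke unbiasedness of $\nabla f_{\xi^k}$, while still letting the deterministic bounds \eqref{eq:stochastic_la} hold almost surely; the biased Option~I would break this independence, which is why the overparameterized case $\nabla f_\xi(x^*)=0$ (hence $\sigma^2=0$) is handled separately.
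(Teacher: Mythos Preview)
Your overall strategy matches the paper's: expand $\|x^{k+1}-x^*\|^2$, use independence of $\la_k$ and $\xi^k$ to get the unbiased cross term, apply strong convexity~\eqref{eq:grad_dist_bound} and the variance bound~\eqref{eq:sgd_variance}, use $\la_k\le\frac{\a}{\mu}\le\frac{1}{2L}$ to kill the functional-gap terms, and finish by bounding $\E{\|x^1-x^*\|^2}$ to produce $C_0$. All of that is fine.

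The gap is in how you handle the noise term. You bound \emph{both} factors of $\la_k$ in $2\la_k^2\sigma^2$ by $\frac{\a}{\mu}$ and separately replace $(1-\la_k\mu)$ by $(1-\frac{\mu\a}{L})$. Unrolling then gives the stationary level
\[
\frac{c}{1-q}=\frac{2\a^2\sigma^2/\mu^2}{\mu\a/L}=\frac{2L}{\mu}\cdot\frac{\a\sigma^2}{\mu^2},
\]
and you say you will ``absorb constants''. But $\frac{2L}{\mu}=2\kappa$ is the condition number, not a universal constant, so this does \emph{not} yield the stated bound $\a\frac{\sigma^2}{\mu^2}$; your argument proves a theorem that is weaker by a factor~$\kappa$ in the noise floor.

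The paper avoids this loss by bounding only \emph{one} factor of $\la_k$ in the noise term, writing $\la_k^2\sigma^2\le \la_k\cdot\frac{\a}{\mu}\sigma^2$, and \emph{keeping} the random $\la_k$ in both the contraction and the noise:
\[
\E\bigl[\|x^{k+1}-x^*\|^2\mid\mathcal F_k\bigr]\le (1-\la_k\mu)\|x^k-x^*\|^2+\la_k\,\frac{\a\sigma^2}{\mu}.
\]
Subtracting $\frac{\a\sigma^2}{\mu^2}$ from both sides then gives the exact factorization
\[
\E\bigl[\|x^{k+1}-x^*\|^2\mid\mathcal F_k\bigr]-\frac{\a\sigma^2}{\mu^2}\le (1-\la_k\mu)\Bigl(\|x^k-x^*\|^2-\frac{\a\sigma^2}{\mu^2}\Bigr),
\]
so the recursion telescopes as a product $\prod_t(1-\la_t\mu)$, and only at the very end is the lower bound $\la_t\ge\frac{\a}{L}$ invoked to get $\exp(-k\frac{\mu\a}{L})$. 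This ordering is what makes the stationary term come out as $\a\frac{\sigma^2}{\mu^2}$ rather than $\kappa\cdot\a\frac{\sigma^2}{\mu^2}$.
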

\begin{proof}
        Under our assumptions on $\alpha\le \frac{\mu}{2L}$, we have $\la_k\le \frac{\alpha}{\mu}\le  \frac{1}{2L}$. Since $\la_k$ is independent of $\xi^k$, we have $\E{\la_k \nabla f_{\xi^k}(x^k)} = \E{\la_k}\E{\nabla f(x^k)}$ and
        \begin{align*}
                \E{\|x^{k+1} - x^*\|^2}
                &= \E{\|x^k - x^*\|^2} - 2 \E{\la_k\<\nabla f(x^k), x^k - x^*>} + \E{\la_k^2}\E{\|\nabla f_{\xi^k}(x^k)\|^2} \\
                &\overset{\eqref{eq:grad_dist_bound}}{\le} \E{(1-\la_k\mu)\|x^k - x^*\|^2} - 2\E{\la_k (f(x^k) - f_*)} + \E{\la_k^2}\E{\|\nabla f_{\xi^k}(x^k)\|^2} \\
                &\overset{\eqref{eq:sgd_variance}}{\le} \E{(1-\la_k\mu)\|x^k - x^*\|^2} - 2 \mathbb{E}\Bigl[\la_k\underbrace{(1 - 2\la_k L)}_{\ge 0}(f(x^k) - f_*)\Bigr] + \E{\la_k^2}\sigma^2 \\
                &\overset{\eqref{eq:stochastic_la}}{\le} \E{1-\la_k\mu}\E{\|x^k - x^*\|^2} + \alpha\frac{\E{\la_k}\sigma^2}{\mu}.
        \end{align*}
        Therefore, if we subtract $\alpha\frac{\sigma^2}{\mu^2}$ from both sides, we obtain
        \begin{align*}
                \E{\|x^{k+1} - x^*\|^2 - \alpha\frac{\sigma^2}{\mu^2}}
                \le \E{1 - \la_k\mu}\E{\|x^k - x^*\|^2 - \alpha\frac{\sigma^2}{\mu^2}}.
        \end{align*}
        If $\E{\|x^k-x^*\|^2}\le \alpha\frac{\sigma^2}{\mu^2}$ for some $k$, it follows that $\E{\|x^{t}-x^*\|^2}\le \alpha\frac{\sigma^2}{\mu^2}$ for any $t\ge k$. Otherwise, we can reuse the produced bound to obtain
        \begin{align*}
                \E{\|x^{k+1} - x^*\|^2}
                \le \prod_{t=1}^k\E{1 - \la_t\mu}\|x^1 - x^*\|^2 + \alpha\frac{\sigma^2}{\mu^2}.
        \end{align*}
        By inequality $1-x\le e^{-x}$, we have $\prod_{t=1}^k\E{1 - \la_t\mu}\le \exp\left(-\mu\sum_{t=0}^k\la_t \right)$.
        In addition, recall that in accordance with~\eqref{eq:stochastic_la}  we have $\la_k \ge \frac{\alpha}{L} $. Thus,
        \begin{align*}
                \E{\|x^{k+1} - x^*\|^2}
                \le \exp\left(-k\alpha\frac{\mu}{L}\right)\E{\|x^1 - x^*\|^2} + \alpha\frac{\sigma^2}{\mu^2}.
        \end{align*}
        It remains to mention that
        \begin{align*}
                \E{\|x^1-x^*\|^2} \le 2\|x^0-x^*\| + 2\la_0^2\E{\|\nabla f_{\xi^0}(x^0)\|^2} \overset{\eqref{eq:sgd_variance}}{\le} 2\|x^0-x^*\| + 2\la_0^2\left(2L^2\|x^0-x^*\|^2 + 2\sigma^2\right).
        \end{align*}
\end{proof}
This gives the following corollary.
\begin{corollary}
        Choose $\alpha = \gamma\frac{\mu}{2 L}$ with $\gamma\le 1$. Then, to achieve $\E{\|x^k - x^*\|^2}= \cO(\varepsilon + \gamma\sigma^2)$ we need only $k=\cO\left(\frac{L^2}{\gamma\mu^2}\log \frac{1+\la_0^2}{\varepsilon} \right)$ iterations. If we choose $\gamma$ proportionally to $\varepsilon$, it implies $\cO\left(\frac{1}{\varepsilon}\log \frac{1}{\varepsilon}\right)$ complexity.
    \end{corollary}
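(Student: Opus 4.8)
The plan is to derive the corollary as a direct specialization of the preceding theorem, so no new machinery is required. First I would check that the prescribed choice $\alpha = \gamma\frac{\mu}{2L}$ with $\gamma\le 1$ satisfies the theorem's hypothesis $\alpha\le\frac{\mu}{2L}$; indeed $\gamma\le 1$ gives exactly this, so the bound $\E{\n{x^k-x^*}^2}\le \exp\bigl(-k\frac{\mu\alpha}{L}\bigr)C_0 + \alpha\frac{\sigma^2}{\mu^2}$ is available with $C_0 = 2(1+2\la_0^2L^2)\n{x^0-x^*}^2 + 4\la_0^2\sigma^2$.

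Next I would substitute the value of $\alpha$ into both terms. The exponent becomes $\frac{\mu\alpha}{L} = \gamma\frac{\mu^2}{2L^2}$, and the residual (variance) term becomes $\alpha\frac{\sigma^2}{\mu^2} = \gamma\frac{\sigma^2}{2L\mu} = \cO(\gamma\sigma^2)$, where $L$ and $\mu$ are treated as fixed constants absorbed into $\cO$. To force the exponentially decaying term below $\e$, I would take logarithms: the requirement $\exp\bigl(-k\gamma\frac{\mu^2}{2L^2}\bigr)C_0\le \e$ is equivalent to $k\ge \frac{2L^2}{\gamma\mu^2}\log\frac{C_0}{\e}$. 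Since $C_0 = \cO(1+\la_0^2)$ in the remaining quantities, we have $\log\frac{C_0}{\e} = \cO\bigl(\log\frac{1+\la_0^2}{\e}\bigr)$, so that $k = \cO\bigl(\frac{L^2}{\gamma\mu^2}\log\frac{1+\la_0^2}{\e}\bigr)$ iterations suffice to make the first term at most $\e$. Adding the two contributions then yields $\E{\n{x^k-x^*}^2} = \cO(\e + \gamma\sigma^2)$, which is the first claim.

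For the final assertion I would set $\gamma$ proportional to $\e$. Then the residual term satisfies $\gamma\sigma^2 = \cO(\e)$, so the total error collapses to $\cO(\e)$, while the factor $\frac{1}{\gamma}$ in the iteration count becomes $\cO\bigl(\frac{1}{\e}\bigr)$; together with the logarithmic factor $\log\frac{1+\la_0^2}{\e} = \cO\bigl(\log\frac{1}{\e}\bigr)$ this gives the advertised $\cO\bigl(\frac{1}{\e}\log\frac{1}{\e}\bigr)$ complexity. The argument is essentially just substitution followed by inverting an exponential, so I do not expect any genuine obstacle; the only point demanding care is the bookkeeping of constants, namely correctly tracking the $\la_0$-dependence of $C_0$ inside the logarithm and ensuring that the choice $\gamma\propto\e$ trades the variance term against the iteration count consistently.
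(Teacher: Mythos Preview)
Your proposal is correct and matches the paper's intent: the corollary is stated there without proof, immediately after the theorem with the line ``This gives the following corollary,'' so the intended derivation is precisely the direct substitution of $\alpha=\gamma\mu/(2L)$ into the theorem's bound, followed by solving $\exp(-k\mu\alpha/L)C_0\le\e$ for $k$ and absorbing constants. Your bookkeeping of the exponent $\mu\alpha/L=\gamma\mu^2/(2L^2)$, the residual $\alpha\sigma^2/\mu^2=\cO(\gamma\sigma^2)$, and the $C_0=\cO(1+\la_0^2)$ dependence is all accurate.
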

\subsection{Same sample: overparameterized models}
Assume additionally that the model is overparameterized, i.e.,\ $\nabla
f_\xi(x^*)=0$ with probability one.
In that case, we can prove that one can use the same stochastic sample to compute the stepsize and to move the iterate. The update becomes
\begin{align*}
        \la_k &= \min\left\{\sqrt{1+\th_k} \la_{k-1}, \frac{\alpha\|x^k - x^{k-1}\|}{\|\nabla f_{\xi^k}(x^k) - \nabla f_{\xi^k}(x^{k-1})\|} \right\}, \\
        x^{k+1} &= x^k - \la_k \nabla f_{\xi^k}(x^k).
\end{align*}
\begin{theorem}
        Let $f_\xi$ be $L$-smooth, $\mu$-strongly convex and satisfy $\nabla f_\xi(x^*)=0$  with probability one. If we choose $\alpha \le \frac{\mu}{L}$, then
        \begin{align*}
                \E{\|x^k - x^*\|^2}
                \le \exp\left(-k\alpha\frac{\mu}{L}\right)C_0,
        \end{align*}
        where $C_0\eqdef 2(1+\la_0^2L^2)\|x^0-x^*\|^2$.
\end{theorem}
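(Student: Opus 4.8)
The plan is to exploit the overparameterization assumption $\nabla f_\xi(x^*)=0$ to obtain a contraction that holds \emph{pathwise} (almost surely) rather than only in conditional expectation, which is what lets the same-sample scheme work. This is the whole point: in the different-samples analysis the crucial step was the independence of $\la_k$ and $\nabla f_{\xi^k}(x^k)$, which allowed the factorization $\E{\la_k\nabla f_{\xi^k}(x^k)}=\E{\la_k}\E{\nabla f(x^k)}$. Here the same $\xi^k$ enters both $\la_k$ and the update, so that independence is gone. The interpolation condition rescues us: since $\nabla f_{\xi^k}(x^*)=0$ and $f_{\xi^k}$ is strongly convex, the point $x^*$ minimizes \emph{every} realization $f_{\xi^k}$, so I can apply smoothness and strong convexity of $f_{\xi^k}$ relative to $x^*$ for each fixed $\xi^k$ and never need to separate $\la_k$ from the gradient.

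First I would record the stepsize bounds. Exactly as in the different-samples lemma, strong convexity and $L$-smoothness of $f_{\xi^k}$ give $\frac{\alpha}{L}\le \la_k\le \frac{\alpha}{\mu}$ almost surely, and the choice $\alpha\le \frac{\mu}{L}$ then yields $\la_k\le \frac1L$. Next I expand $\n{x^{k+1}-x^*}^2=\n{x^k-x^*}^2-2\la_k\<\nabla f_{\xi^k}(x^k),x^k-x^*>+\la_k^2\n{\nabla f_{\xi^k}(x^k)}^2$ and bound the two nontrivial terms for fixed $\xi^k$. The strong-convexity inequality~\eqref{eq:grad_dist_bound} applied to $f_{\xi^k}$ gives $\<\nabla f_{\xi^k}(x^k),x^k-x^*>\ge \frac{\mu}{2}\n{x^k-x^*}^2+f_{\xi^k}(x^k)-f_{\xi^k}(x^*)$, while Lemma~\ref{lemma:coco} applied to $f_{\xi^k}$ together with $\nabla f_{\xi^k}(x^*)=0$ gives $\n{\nabla f_{\xi^k}(x^k)}^2\le 2L\bigl(f_{\xi^k}(x^k)-f_{\xi^k}(x^*)\bigr)$.

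Combining these, the functional-gap terms collect into $-2\la_k(1-\la_k L)\bigl(f_{\xi^k}(x^k)-f_{\xi^k}(x^*)\bigr)$, which is nonpositive because $\la_k\le \frac1L$ and $x^*$ minimizes $f_{\xi^k}$. Hence I obtain the pathwise contraction $\n{x^{k+1}-x^*}^2\le (1-\la_k\mu)\n{x^k-x^*}^2$. Iterating this from $k$ down to $1$, using $\la_t\ge \frac{\alpha}{L}$ and $1-x\le e^{-x}$, yields $\n{x^{k+1}-x^*}^2\le \exp\!\bigl(-k\tfrac{\alpha\mu}{L}\bigr)\n{x^1-x^*}^2$ almost surely. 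Taking expectations and bounding $\E{\n{x^1-x^*}^2}$ through $x^1=x^0-\la_0\nabla f_{\xi^0}(x^0)$ with $\n{\nabla f_{\xi^0}(x^0)}=\n{\nabla f_{\xi^0}(x^0)-\nabla f_{\xi^0}(x^*)}\le L\n{x^0-x^*}$ gives $\E{\n{x^1-x^*}^2}\le 2(1+\la_0^2L^2)\n{x^0-x^*}^2=C_0$, which after the usual reindexing is the claimed bound.

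The main obstacle, and the one genuinely new ingredient relative to the earlier theorem, is precisely this loss of independence between $\la_k$ and $\nabla f_{\xi^k}(x^k)$. My plan sidesteps it entirely by never taking a conditional expectation before the contraction is established: the interpolation condition lets me treat each iteration as a deterministic gradient step on the random function $f_{\xi^k}$, whose minimizer is the common $x^*$, so the whole argument reduces to the standard smooth and strongly convex GD estimate applied realization by realization, after which a single expectation at the end suffices.
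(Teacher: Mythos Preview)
Your proposal is correct and follows essentially the same route as the paper's proof: pathwise contraction via the strong-convexity bound~\eqref{eq:grad_dist_bound} and the smooth-convex inequality of Lemma~\ref{lemma:coco} applied to each realization $f_{\xi^k}$ (using $\nabla f_{\xi^k}(x^*)=0$), combined with the stepsize bounds $\alpha/L\le\la_k\le\alpha/\mu\le 1/L$, and the same estimate for $\E{\n{x^1-x^*}^2}$. Your explicit remark that the interpolation condition is precisely what allows one to bypass the lost independence between $\la_k$ and $\nabla f_{\xi^k}(x^k)$ captures the key idea behind the argument.
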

\begin{proof}
        Now $\la_k$ depends on $\xi^k$, so we do not have an unbiased update anymore. However, under the new assumption, $\nabla f_{\xi^k}(x^*)=0$, so we can write
        \begin{align*}
                \<\nabla f_{\xi^k}(x^k), x^k-x^*>
                \overset{\eqref{eq:grad_dist_bound}}{\ge} \frac{\mu}{2}\|x^k -x^*\|^2 + f_{\xi^k}(x^k) - f_{\xi^k}(x^*).
        \end{align*}
        In addition, $L$-smoothness and convexity of $f_{\xi^k}$ give
        \begin{align*}
                \|\nabla f_{\xi^k}(x^k)\|^2
                \le 2L (f_{\xi^k}(x^k) - f_{\xi^k}(x^*)).
        \end{align*}
        Since our choice of $\alpha$ implies $\la_k\le \frac{1}{L}$, we conclude that
        \begin{align*}
                \|x^{k+1}-x^*\|^2
                &= \|x^k - x^*\|^2 - 2\la_k\<\nabla f_{\xi^k}(x^k), x^k- x^*> + \la_k^2\|\nabla f_{\xi^k}(x^k)\|^2 \\
                &\le (1 - \la_k\mu)\|x^k - x^*\|^2 - 2\la_k(1 - \la_k L)(f_{\xi^k}(x^k) - f_{\xi^k}(x^*)) \\
                &\le (1 - \la_k\mu)\|x^k - x^*\|^2.
        \end{align*}
        Furthermore, as $\|\nabla f_{\xi^0}(x^0)\|=\|\nabla f_{\xi^0}(x^0)- \nabla f_{\xi^0}(x^*)\|\le L\|x^0-x^*\|$, we also get a better bound on $\E{\|x^1-x^*\|^2}$, namely
        \begin{align*}
                \E{\|x^1-x^*\|^2}\le 2\|x^0-x^*\| + 2\la_0^2\E{\|\nabla f_{\xi^0}(x^0)\|^2} \le 2(1+\la_0^2L^2)\|x^0-x^*\|.
        \end{align*}
    \end{proof}

\section{Experiments details}\label{ap:exp_details}
Here we provide some omitted details of the experiments with neural
networks. We took the implementation of neural networks from a
publicly available repository\footnote{\href{https://github.com/kuangliu/pytorch-cifar/blob/master/models/resnet.py}{https://github.com/kuangliu/pytorch-cifar/blob/master/models/resnet.py}}. All methods were run with standard data augmentation and no weight decay. The confidence intervals for ResNet-18 are obtained from 5 different random seeds and for DenseNet-121 from 3 seeds.

In our ResNet-18 experiments, we used the default parameters for
Adam. SGD was used with a stepsize divided by 10 at epochs 120 and
160 when the loss plateaus. Log grid search with a factor of 2 was used to tune the initial stepsize of SGD and the best initial value was 0.2. Tuning was done by running SGD 3 times and comparing the average of test accuracies over the runs at epoch 200. For the momentum version (SGDm) we used the standard values of momentum and initial stepsize for training residual networks, 0.9 and 0.1 correspondingly. We used the same parameters for DenseNet-121 without extra tuning.

For our method we used the variant of SGD $x^{k+1}=x^k - \la_k \nabla f_{\xi^k}(x^k)$ with $\la_k$ computed using $\xi^k$ as well (biased option). We did not test stepsizes that use values other than $\frac{1}{L_k}$ and $\frac{1}{2L_k}$, so it is possible that other options will perform better. Moreover, the coefficient before $\th_{k-1}$ might be suboptimal too.
\end{document}